\newcommand{\eq}{:=}
\newcommand{\grad}{\boldsymbol \nabla}
\renewcommand{\div}{\grad {\cdot}}
\newcommand{\curl}{\grad {\times}}
\newcommand{\ccurl}{{\operatorname{\bf {curl}}}}
\newcommand{\ddiv}{\operatorname{div}}
\newcommand{\BH}{\boldsymbol H}
\newcommand{\BL}{\boldsymbol L}
\newcommand{\BR}{\boldsymbol R}
\newcommand{\BV}{\boldsymbol V}
\newcommand{\ba}{\boldsymbol a}
\newcommand{\bb}{\boldsymbol b}
\newcommand{\bc}{\boldsymbol c}
\newcommand{\bd}{\boldsymbol d}
\newcommand{\be}{\boldsymbol e}
\newcommand{\bg}{\boldsymbol g}
\newcommand{\bh}{\boldsymbol h}
\newcommand{\bj}{\boldsymbol j}
\newcommand{\bn}{\boldsymbol n}
\newcommand{\br}{\boldsymbol r}
\newcommand{\bv}{\boldsymbol v}
\newcommand{\bw}{\boldsymbol w}
\newcommand{\bx}{\boldsymbol x}
\newcommand{\CE}{\mathcal E}
\newcommand{\CF}{\mathcal F}
\newcommand{\CP}{\mathcal P}
\newcommand{\CT}{\mathcal T}
\newcommand{\CV}{\mathcal V}
\newcommand{\LE}{\mathscr E}
\newcommand{\LR}{\mathscr R}
\newcommand{\LT}{\mathscr T}
\newcommand{\LV}{\mathscr V}
\newcommand{\TE}{\textup E}
\newcommand{\TF}{\textup F}
\newcommand{\TM}{\textup M}
\newcommand{\TR}{\textup R}
\newcommand{\TS}{\textup S}
\newcommand{\BCP}{\boldsymbol{\CP}}
\newtheorem{theorem}{Theorem}
\newtheorem{proposition}[theorem]{Proposition}
\newtheorem{corollary}[theorem]{Corollary}
\newtheorem{lemma}[theorem]{Lemma}
\newtheorem{remark}[theorem]{Remark}
\newtheorem{definition}[theorem]{Definition}
\numberwithin{equation}{section}
\numberwithin{theorem}{section}
\newcommand{\sign}{\operatorname{sign}}
\newcommand{\bzero}{\boldsymbol 0}
\newcommand{\bchi}{\boldsymbol \chi}
\newcommand{\btau}{\boldsymbol \tau}
\newcommand{\om}{\omega}
\newcommand{\oma}{\omega_{\ba}}
\newcommand{\omo}{\omega_{\boldsymbol 0}}
\newcommand{\tomo}{\widetilde \omega_{\boldsymbol 0}}
\newcommand{\omb}{\widetilde \omega_{\bb}}
\newcommand{\ome}{\omega_e}
\newcommand{\Ga}{\Gamma_{\ba}}
\newcommand{\tGamma}{\widetilde \Gamma}
\newcommand{\Gaess}{\Gamma_{\ba}^{\rm ess}}
\newcommand{\Ganat}{\Gamma_{\ba}^{\rm nat}}
\newcommand{\wGaess}{\widehat \Gamma_{\ba}^{\rm ess}}
\newcommand{\wGanat}{\widehat \Gamma_{\ba}^{\rm nat}}
\newcommand{\Goess}{\Gamma_{\bzero}^{\rm ess}}
\newcommand{\Gonat}{\Gamma_{\bzero}^{\rm nat}}
\newcommand{\tGoess}{\widetilde \Gamma_{\bzero}^{\rm ess}}
\newcommand{\tGonat}{\widetilde \Gamma_{\bzero}^{\rm nat}}
\newcommand{\CTa}{\CT_{\ba}}
\newcommand{\CTb}{\widetilde \CT_{\bb}}
\newcommand{\CFb}{\widetilde \CF_{\bb}}
\newcommand{\tCTa}{\widetilde \CT_{\ba}}
\newcommand{\tCTo}{\widetilde \CT_{\boldsymbol 0}}
\newcommand{\CTo}{\CT_{\boldsymbol 0}}
\newcommand{\hCTo}{\widehat \CT_{\boldsymbol 0}}
\newcommand{\CVa}{\CV_{\ba}}
\newcommand{\toma}{\widetilde \omega_{\ba}}
\newcommand{\tG}{\widetilde \Gamma}
\newcommand{\tbchi}{\widetilde \bchi}
\newcommand{\CFa}{\CF_{\ba}}
\newcommand{\CTe}{\CT_e}
\newcommand{\CVe}{\CV_e}
\newcommand{\RT}{\bm{\mathcal{R\hspace{-0.1em}T}}\hspace{-0.25em}}
\newcommand{\ND}{\bm{\mathcal{N}}\hspace{-0.2em}}
\newcommand{\tbj}{\widetilde \bj}
\newcommand{\tbr}{\widetilde \br}
\newcommand{\tbv}{\widetilde \bv}
\newcommand{\bxi}{\boldsymbol \xi}
\newcommand{\bphi}{\boldsymbol \phi}
\newcommand{\scurl}{\operatorname{curl}}
\newcommand{\col}{{\tt col}}
\newcommand{\Kin }{K_{\rm in }}
\newcommand{\Kout}{K_{\rm out}}
\newcommand{\JAC}{\mathbb J}
\newcommand{\enorm}[1]{|\!|\!|#1|\!|\!|}
\newcommand\scp{{\cdot}} 
\newcommand\vp{{\times}} 
\newcommand\nv{\bm 0}
\newcommand{\bsig}{\bm{\sigma}}
\newcommand\pt{\partial}
\title[Stable discrete minimization in the de Rham complex]%
{Constrained and unconstrained stable discrete minimizations
for $\MakeLowercase{p}$-robust local reconstructions in~vertex
patches in the de Rham complex$^\star$}
\author{T. Chaumont-Frelet$^\dagger$ and M. Vohral\'ik$^\ddagger$}
\address{\vspace{-.5cm}}
\address{\noindent \tiny \textup{$^\dagger$Inria Univ. Lille and Laboratoire Paul Painlev\'e, 59655 Villeneuve-d'Ascq, France}}
\address{\noindent \tiny \textup{$^\ddagger$Inria, 2 rue Simone Iff, 75589 Paris, France \&
CERMICS, Ecole des Ponts, 77455 Marne-la-Vall\'ee, France}}
\address{\vspace{.5cm}}
\address{\noindent \tiny \textup{$^\star$This project has received funding from the European Research Council (ERC) under
the European Union’s Horizon 2020 research and innovation program (grant agreement No 647134
GATIPOR).}}
\begin{document}

\begin{abstract}
We analyze constrained and unconstrained minimization problems on patches
of tetrahedra sharing a common vertex with discontinuous piecewise polynomial
data of degree $p$. We show that the discrete minimizers in the spaces of piecewise
polynomials of degree $p$ conforming in the $H^1$, $\BH(\ccurl)$, or $\BH(\ddiv)$
spaces are as good as the minimizers in these entire (infinite-dimensional)
Sobolev spaces, up to a constant that is independent of $p$. These results are useful
in the analysis and design of finite element methods, namely for devising stable local
commuting projectors and establishing local-best--global-best equivalences in a priori
analysis and in the context of a posteriori error estimation. Unconstrained minimization
in $H^1$ and constrained minimization in $\BH(\ddiv)$ have been previously treated in the
literature. Along with improvement of the results in the $H^1$ and $\BH(\ddiv)$ cases,
our key contribution is the treatment of the $\BH(\ccurl)$ framework. This enables us
to cover the whole De Rham diagram in three space dimensions in a single setting.

\vspace{.5cm}
\noindent
{\sc Keywords.}
potential reconstruction, flux reconstruction, a posteriori error estimate,
robustness, polynomial degree, best approximation, finite element method.

\vspace{.5cm}
\noindent
{\sc Mathematics Subject Classification.}
65N15, 65N30, 65K10.
\end{abstract}


\maketitle

\thispagestyle{empty}


\section{Introduction}

The concept of ``equilibrated flux'', dating back to at least the seminal
paper~\cite{prager_synge_1947a}, is the basis for the design of guaranteed
a posteriori error estimates for finite element discretization of various
PDE problems, see~\cite{Dest_Met_expl_err_CFE_99, ainsworth_oden_2000a, luce_wohlmuth_2004a, Repin_book_08, braess_schoberl_2008a, ern_vohralik_2015a, Lad_Cham_CRE_16, Ern_Sme_Voh_heat_HO_Y_17, Sme_Voh_RD_20, chaumontfrelet_ern_vohralik_2021b} and the references therein.
One key feature of this family of estimators is that they
can be designed so that they are ``polynomial-degree-robust'' (or simply, $p$-robust), meaning
that their overestimation factor does not depend on the polynomial degree $p$ of the discretization
space. This fact has first been established in~\cite{braess_pillwein_schoberl_2009a} when
considering a conforming finite element discretization of the two-dimensional Poisson problem.
The proof hinges on the following result: if $\CTa$ is a vertex patch of triangles sharing a
vertex $\ba$, $\oma$ is the corresponding domain, $p \geq 0$ is a polynomial degree, and
$r_p \in \CP_p(\CTa)$ as well as $\btau_p \in \RT_p(\CTa)$
are given (discontinuous) piecewise polynomial data (these notations are rigorously
introduced below), there holds
\begin{equation}
\label{eq_constrained_minimization}
\min_{\substack{
\bw_p \in \RT_p(\CTa) \cap \BH_0(\ddiv,\oma)
\\
\div \bw_p = r_p
}}
\|\bw_p - \btau_p\|_{\oma}
\leq
C_{\rm st}
\min_{\substack{
\bw \in \BH_0(\ddiv,\oma)
\\
\div \bw = r_p
}}
\|\bw - \btau_p\|_{\oma},
\end{equation}
where the constant $C_{\rm st}$ only depends on the shape-regularity parameter of
the patch; crucially, $C_{\rm st}$ is independent of $p$.
The proof of~\eqref{eq_constrained_minimization} hinges on the volume and normal trace
$p$-robust polynomial extensions on a single tetrahedron
of~\cite[Proposition~4.2]{Cost_McInt_Bog_Poinc_10} and~\cite[Theorem~7.1]{Demk_Gop_Sch_ext_III_12},
and the result also holds in three space dimensions, see~\cite[Corollary~3.3]{ern_vohralik_2020a}.
The (fully computable) minimizer on the left-hand side of~\eqref{eq_constrained_minimization}
is directly involved in the construction of the a posteriori error estimator, while the minimum
of the right-hand side is not computable but can be straightforwardly related to the
discretization error in the patch domain $\oma$. The constant $C_{\rm st}$ thus naturally enters
the efficiency estimate of the estimator, and the $p$-robustness is a consequence of the fact that
it is independent of $p$.

Constrained minimization problems of the form~\eqref{eq_constrained_minimization}
are sufficient for the a posteriori analysis of conforming finite element discretizations.
However, when considering nonconforming discretizations~\cite{ern_vohralik_2015a},
another family of minimization problems comes into play. Specifically, the following
stability result is of paramount importance: given a (discontinuous) piecewise
polynomial $\chi_p \in \CP_{p+1}(\CTa)$ vanishing on $\partial \oma$, there holds
\begin{equation}
\label{eq_unconstrained_minimization}
\min_{v_p \in \CP_{p+1}(\CTa) \cap H^1_0(\oma)} \|\grad (v_p - \chi_p)\|_{\oma}
\leq
C_{\rm st}
\min_{v \in H^1_0(\oma)} \|\grad (v - \chi_p)\|_{\oma},
\end{equation}
where $\grad$ denotes the broken (elementwise) gradient and $C_{\rm st}$ again does not depend
on $p$, see~\cite[Corollary~3.1]{ern_vohralik_2020a} which builds
on~\cite[Theorem~6.1]{Demk_Gop_Sch_ext_I_09}. Similarly to~\eqref{eq_constrained_minimization},
the minimizer of the left-hand side of~\eqref{eq_unconstrained_minimization} is computed as a part
of the estimator construction, while the right-hand side can be linked to the discretization error
in the patch domain $\oma$.

The $H^1$ and $\BH(\ddiv)$ spaces in~\eqref{eq_constrained_minimization}
and~\eqref{eq_unconstrained_minimization} are naturally involved in the context
of the Poisson problem, since the Laplace differential operator is a composition
of gradient and divergence. When considering Maxwell's equations and their discretization
by N\'ed\'elec's elements, minimization problems similar
to~\eqref{eq_constrained_minimization} and~\eqref{eq_unconstrained_minimization} but
involving the $\BH(\ccurl)$ Sobolev space and the curl operator naturally
emerge~\cite{chaumontfrelet_ern_vohralik_2020a,chaumontfrelet_ern_vohralik_2021a,Chaum_Voh_Maxwell_equil_21}.
In particular, an equivalent to~\eqref{eq_constrained_minimization}
on a smaller edge patch of tetrahedra has been recently established
in~\cite[Theorem~3.1]{chaumontfrelet_ern_vohralik_2021a}, building
on~\cite[Proposition~4.2]{Cost_McInt_Bog_Poinc_10} and~\cite[Theorem~7.2]{Demk_Gop_Sch_ext_II_09}.

In addition to the analysis of a posteriori error estimators,
constrained and unconstrained minimization problems of the form~\eqref{eq_constrained_minimization}
and~\eqref{eq_unconstrained_minimization} are also instrumental in the design of stable local
commuting interpolation operators having the projection property under minimal regularity
and in the equivalence of ``global-best'' and ``local-best'' approximations,
see~\cite{Veeser_approx_grads_16, Tant_Vees_Verf_loc_RD_15, chaumontfrelet_vohralik_2021a, ern_gudi_smears_vohralik_2022}.

The goal of the present work is threefold: (i) to establish a $\BH(\ccurl)$-variant
of~\eqref{eq_constrained_minimization} and~\eqref{eq_unconstrained_minimization} on a
vertex patch of tetrahedra; (ii) present a complete theory of constrained
and unconstrained local minimization problems in the De Rham complex in three space dimensions,
realizing that the $\BH(\ccurl)$-minimization was the last piece missing; (iii) complement on
and improve the results presented in~\cite{ern_vohralik_2020a} for the treatment of boundary
patches.

The remainder of this work is organized as follows. In Section~\ref{section_settings},
we specify the setting as well as the notation. Section~\ref{section_results}
presents our main results, and we show in Section~\ref{sec_inhom_BC} that these also
cover the case of inhomogeneous boundary conditions. Section~\ref{section_preliminary}
then collects some technical results and detailed notations used in the bulk of the proofs
for interior patches in Section~\ref{section_interior_patches}. We treat the case
of boundary patches in Section~\ref{section_boundary_patches}. We label as ``Proposition'' known results, whereas the main new results are named
``Theorem'' or ``Corollary''.

\section{Setting}
\label{section_settings}

\subsection{Vertex patch}
\label{sec_VP}

Throughout this work, $\CTa$ denotes a patch of tetrahedra, a finite collection
of closed nontrivial tetrahedra $K \subset \mathbb R^3$ that all have $\ba$ as vertex,
and which is such that for two elements $K_\pm \in \CTa$, the intersection $K_- \cap K_+$
is either the vertex $\ba$, a full edge of both $K_-$ and $K_+$, or a full face of both
$K_-$ and $K_+$. We also assume that the patch is face connected, meaning
that a path between two points in two different tetrahedra in $\CTa$ can always pass through
interiors of tetrahedra faces. We denote by $\oma$ the interior of $\cup_{K \in \CTa} K$.
We suppose that it has a Lipschitz boundary $\partial \oma$ and that $\overline{\oma}$ is 
homotopic to a ball. For a tetrahedron $K$, $\bn_K$ is its unit normal vector, outward to $K$.
For the applications we have in mind, this situation appears when $\ba$ is a vertex of a
simplicial mesh $\CT_h$ of some computational domain $\Omega$ in the context of finite
element methods, see, e.g., \cite{ciarlet_2002a,Bof_Brez_For_MFEs_13,ern_guermond_2021a}.

\begin{figure}
\centerline{\includegraphics[width=0.4\linewidth]{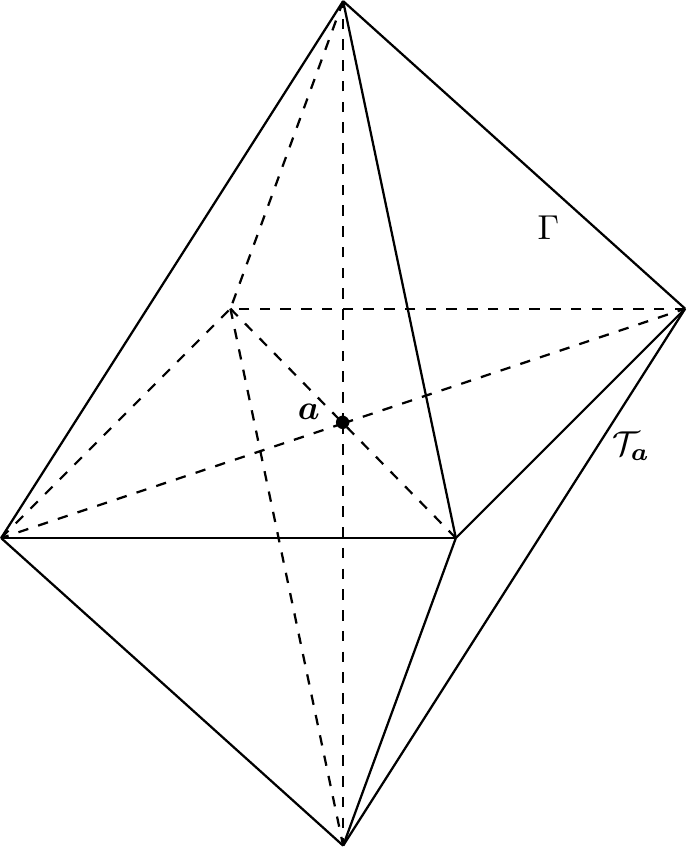} \qquad \includegraphics[width=0.4\linewidth]{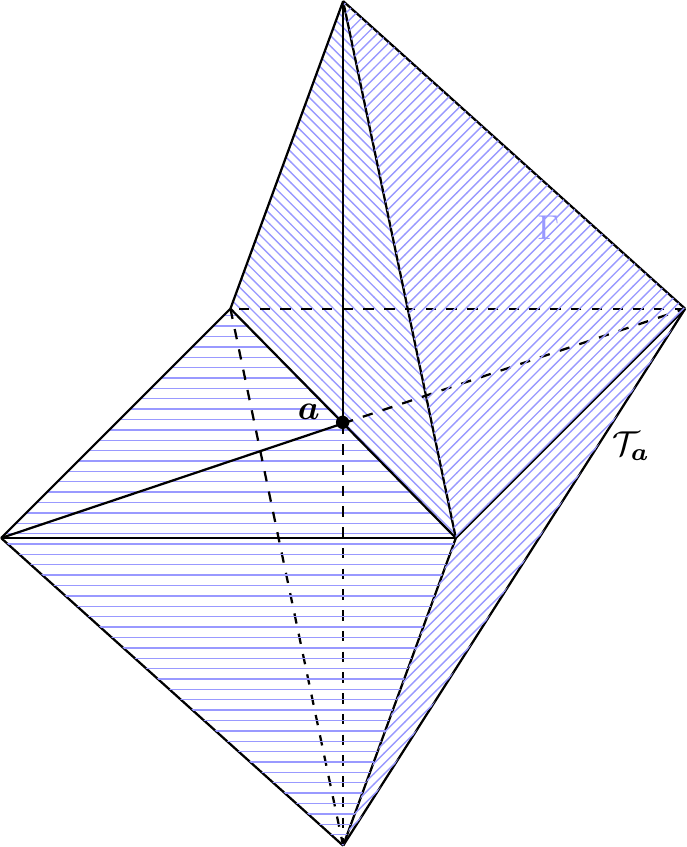}}
\caption{Interior patch (left) and boundary patch (right, with $\Ga = \emptyset$ and consequently $\Gamma = \partial \oma$)}
\label{fig_patches_1}
\end{figure}

Let $\CFa$ denote the set of the (closed) faces of the tetrahedra of the patch;
with each face $F \in \CFa$,
we associate a unit normal vector $\bn_F$ of an arbitrary but fixed orientation.
We will distinguish two situations. When $\oma$ contains an open ball around $\ba$,
we call $\CTa$ an ``interior patch'' and we set $\Gamma \eq \partial \oma$ and
$\Ga \eq \emptyset$, see Figure~\ref{fig_patches_1}, left, for an illustration.
When this is not the case, we speak of a ``boundary patch''. Then, we suppose that
there is a cone $\mathcal{C}$ with the vertex $\ba$ and a strictly positive solid angle such
that $\mathcal{C} \cap \oma = \emptyset$, forming an ``opening''.
Since $\oma$ has a Lipschitz boundary, there is exactly one such an opening.
In this case, $\Ga$ corresponds to some or all faces $F \in \CFa$ lying on the
boundary of $\oma$ and sharing the vertex $\ba$ and
$\Gamma \eq \partial \oma \setminus \overline \Ga$. In all cases,
we suppose that both $\Gamma$ and $\Ga$ are connected
and have Lipschitz boundaries, which in particular excludes
``checkerboard'' boundary patterns. 
Figure~\ref{fig_patches_1}, right, and Figure~\ref{fig_patches_2} provide illustrations.

\begin{figure}
\centerline{\includegraphics[width=0.4\linewidth]{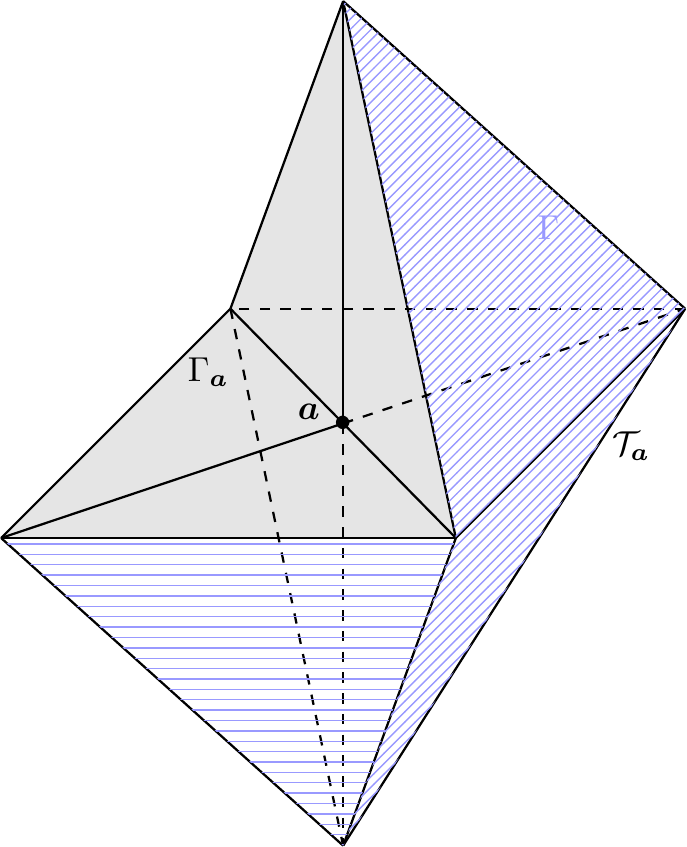} \qquad \includegraphics[width=0.4\linewidth]{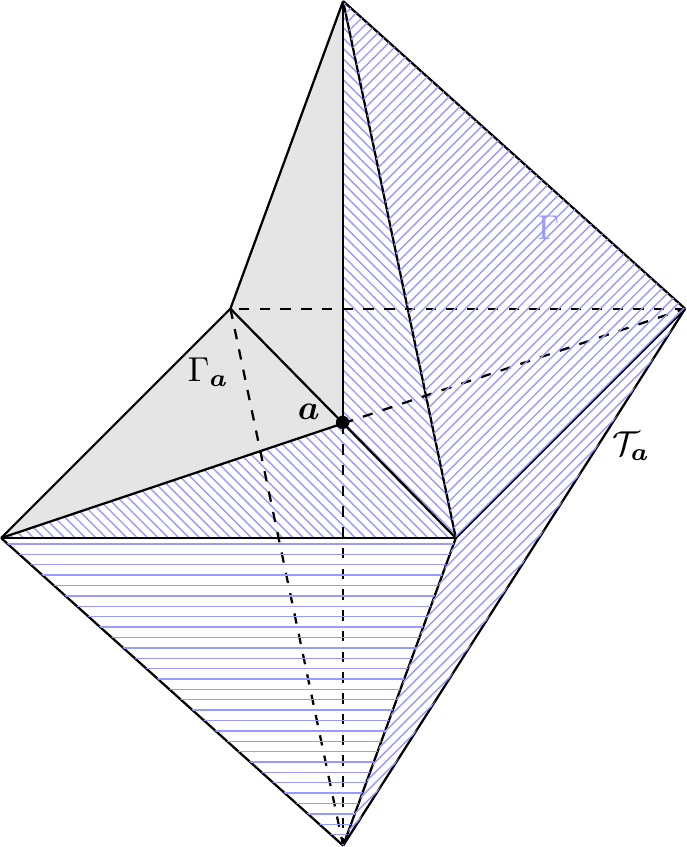}}
\caption{Boundary patches with $\Ga \neq \emptyset$; $\Ga$ corresponding to all faces $F \in \CFa$ lying on the boundary of $\oma$ and sharing the vertex $\ba$ (left), $\Ga$ corresponding to some faces $F \in \CFa$ lying on the boundary of $\oma$ and sharing the vertex $\ba$ (right)}
\label{fig_patches_2}
\end{figure}

We will say that two elements $K_\pm \in \CTa$ of the patch are neighbors if and only
if they share a face, i.e., if there exists $F \in \CFa$ such that $F = K_- \cap K_+$.

\subsection{Shape regularity}

For a tetrahedron $K$, let $h_K$ and $\rho_K$ respectively
denote the diameter of $K$ and the diameter of the largest ball
contained in $K$. The shape-regularity parameter $\kappa_K \eq h_K/\rho_K$ is then
a measure of the ``flatness'' of $K$, see, e.g., \cite{ciarlet_2002a,ern_guermond_2021a}.
If $\CT$ is a collection of tetrahedra, we denote by $\kappa_\CT \eq \max_{K \in \CT} \kappa_K$
the shape-regularity parameter of $\CT$.

\subsection{Functional spaces} \label{sec_fct_sp}

If $\omega \subset \mathbb R^3$ is a domain (open, bounded, and connected set)
with Lipschitz boundary, $H^1(\omega)$,
$\BH(\ccurl,\omega)$, and $\BH(\ddiv,\omega)$ are the usual Sobolev
spaces~\cite{adams_fournier_2003a,Bof_Brez_For_MFEs_13,ern_guermond_2021a,girault_raviart_1986a},
$\BH^1(\omega) \eq [ H^1(\omega) ]^3$, and $\BL^2(\omega) \eq [L^2(\omega)]^3$.
If $\gamma \subset \partial \omega$ is a relatively
open subset of the boundary of $\omega$, $H^1_{0,\gamma}(\omega)$ is the
subset of functions of $H^1(\omega)$ with vanishing trace on $\gamma$, and
$\BH^1_{0,\gamma}(\omega) \eq [H^1_{0,\gamma}(\omega) ]^3$. In the $\BH(\ccurl)$ setting,
we then denote
\begin{equation*}
\BH_{0,\gamma}(\ccurl,\omega)
\eq
\left \{
\bv \in \BH(\ccurl,\omega) \; | \; \bv \vp \bn_{\om} = \nv \text{ on } \gamma
\right \},
\end{equation*}
where the notion of trace is understood by duality, i.e.,
$\bv \vp \bn_{\om} = \nv$ on $\gamma$ means that
\[
(\curl \bv,\bphi)_{\omega} - (\bv,\curl \bphi)_{\omega} = 0 \qquad
\forall \bphi \in \BH^1_{0,\partial \omega \setminus \overline{\gamma}}(\omega).
\]
In the $\BH(\ddiv)$ setting, similarly,
\begin{equation*}
\BH_{0,\gamma}(\ddiv,\omega)
\eq
\left \{
\bv \in \BH(\ddiv,\omega) \; | \; \bv \scp \bn_{\om} = 0 \text{ on } \gamma
\right \},
\end{equation*}
where $\bv \scp \bn_{\om} = 0$ on $\gamma$ means that
\[
    (\div \bv,\phi)_{\omega} + (\bv,\grad \phi)_{\omega} = 0
\qquad
\forall \phi \in H^1_{0,\partial \omega \setminus \overline{\gamma}}(\omega).
\]

We refer the reader to~\cite{fernandes_gilardi_1997a} for a detailed treatment of
boundary conditions in $\BH(\ccurl,\omega)$ and $\BH(\ddiv,\omega)$. For the sake
of simplicity, we also define $L^2_{0,\gamma}(\omega)$ as $L^2(\omega)$ if $\gamma$ is
non-empty, and as the subset of $L^2(\omega)$ with functions of zero mean value on $\omega$
if $\gamma$ is empty. We will also employ the above notations if $\omega$ is a (closed)
tetrahedron and $\gamma$ the union of some of its (closed) faces.

\subsection{Piecewise polynomial spaces}

Consider a tetrahedron $K$. For $q \geq 0$, $\CP_q(K)$ is the set of polynomials
of degree less than or equal to $q$, and $\BCP_q(K) \eq [\CP_q(K)]^3$. The spaces
of Raviart--Thomas and N\'ed\'elec polynomials are then defined by
\begin{equation*}
\RT_q(K) \eq \BCP_q(K) + \bx \CP_q(K),
\qquad
\ND_q(K) \eq \BCP_q(K) + \bx \times \BCP_q(K),
\end{equation*}
see~\cite{raviart_thomas_1977a, nedelec_1980a, Bof_Brez_For_MFEs_13, ern_guermond_2021a}.
If $\CT$ is a collection of tetrahedra, we employ the notations $\CP_q(\CT)$, $\RT_q(\CT)$,
and $\ND_q(\CT)$ for functions whose restrictions to each $K \in \CT$ belong respectively to
$\CP_q(K)$, $\RT_q(K)$, and $\ND_q(K)$. Notice that these spaces have no ``built-in''
continuity conditions (they form the so-called broken spaces); we impose the continuity
conditions by an intersection with the Sobolev spaces from Section~\ref{sec_fct_sp}.

\section{Main results}
\label{section_results}

We start by stating the following result
from~\cite[Corollaries~3.3 and~3.8]{ern_vohralik_2020a}.%
\footnote{Actually, some geometrical situations for boundary patches were excluded
in~\cite{ern_vohralik_2020a} (at most two simplices in the patch $\CTa$ or the existence
of an interior vertex in $\Gamma$; these are now covered by the proof detailed in
Section~\ref{section_boundary_patches} below).} Here and below, $C(x)$ means a generic
constant only depending on the quantity $x$. Thus, our results only depend on the
shape-regularity $\kappa_{\CTa}$ of the patch $\CTa$ and not on the underlying polynomial
degree $p$ (or mesh size $h$ or any other parameter).

\begin{proposition}[Constrained minimization in $\BH_{0,\Gamma}(\ddiv,\oma)$]
\label{prop_Hd_cons}
For all $p \geq 0$, $\btau_p \in \RT_p(\CTa)$, and
$r_p \in \CP_p(\CTa) \cap L^2_{0,\Ga}(\oma)$, we have
\begin{equation*}
\min_{\substack{
\bw_p \in \RT_p(\CTa) \cap \BH_{0,\Gamma}(\ddiv,\oma)
\\
\div \bw_p = r_p
}}
\|\bw_p - \btau_p\|_{\oma}
\leq
C(\kappa_{\CTa})
\min_{\substack{
\bw \in \BH_{0,\Gamma}(\ddiv,\oma)
\\
\div \bw = r_p
}}
\|\bw - \btau_p\|_{\oma}.
\end{equation*}
\end{proposition}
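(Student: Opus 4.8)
The plan is to reduce the statement to the known single-tetrahedron $p$-robust extension results (the volume-and-normal-trace extensions of \cite[Proposition~4.2]{Cost_McInt_Bog_Poinc_10} and \cite[Theorem~7.1]{Demk_Gop_Sch_ext_III_12}) by a patchwise ``sequential sweep'' argument, which is the strategy already used in \cite{ern_vohralik_2020a}. First I would fix a minimizer $\bw^\star \in \BH_{0,\Gamma}(\ddiv,\oma)$ of the right-hand side; we only need to produce \emph{some} $\bw_p \in \RT_p(\CTa)\cap\BH_{0,\Gamma}(\ddiv,\oma)$ with $\div\bw_p = r_p$ and $\|\bw_p-\btau_p\|_{\oma} \le C(\kappa_{\CTa})\,\|\bw^\star-\btau_p\|_{\oma}$. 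Since $\RT_p \ni \btau_p$, setting $\bsig \eq \bw^\star - \btau_p \in \BH(\ddiv,\oma)$ with $\div\bsig = r_p - \div\btau_p =: g_p \in \CP_p(\CTa)$ (note $g_p$ has the right mean value on $\oma$ if $\Gamma$ is not all of $\partial\oma$, i.e.\ if $\Ga\neq\emptyset$, and has zero mean if $\Gamma = \partial\oma$, because $r_p \in L^2_{0,\Ga}$), the problem becomes: build $\bw_p^{\mathrm{loc}} \in \RT_p(\CTa)$, with zero normal trace on $\Gamma$, with elementwise divergence $g_p$, and with $\|\bw_p^{\mathrm{loc}}\|_{\oma} \le C(\kappa_{\CTa})\,\|\bsig\|_{\oma}$. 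Then $\bw_p \eq \btau_p + \bw_p^{\mathrm{loc}}$ works.

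The core is the sweep. I would enumerate the tetrahedra $K_1,\dots,K_n$ of the patch in an order compatible with the face-connectedness so that each $K_j$ ($j\ge 2$) shares a face with $\bigcup_{i<j}K_i$, and define $\bw_p^{\mathrm{loc}}$ element by element. On each $K_j$ the local data consist of: the prescribed interior divergence $g_p|_{K_j}$, and the prescribed normal traces on the ``already-visited'' faces of $K_j$ (forced by the conformity with the previously constructed pieces, or equal to $0$ on faces lying in $\Gamma$). The remaining ``free'' faces of $K_j$ carry a normal trace that we are allowed to choose; the single-tetrahedron extension result then delivers a $\RT_p(K_j)$ field realizing this data with an $L^2(K_j)$ bound by (a shape-regularity-dependent constant times) the $L^2(K_j)$-norm of \emph{any} $\BH(\ddiv,K_j)$ competitor with the same interior divergence and the same traces on the visited faces. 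The natural competitor is $\bsig|_{K_j}$ itself, \emph{but} its normal trace on the visited faces of $K_j$ is in general not a polynomial; this is handled by the standard trick of first subtracting a suitable correction supported in the already-visited region — equivalently, one works with $\bsig$ minus the lifting of the previously-committed jump — so that the needed compatibility holds. One must keep track of the divergence constraint: when the free faces are exhausted (which happens for the ``last'' element(s) around the vertex, and for interior patches for the element that closes the loop around $\ba$), the interior data $g_p$ together with the already-fixed traces must satisfy the divergence theorem on $K_j$; this is exactly the reason the global compatibility $r_p\in L^2_{0,\Ga}(\oma)$ (hence $\int_{\oma} g_p = \int_{\partial\oma\setminus\Gamma}\bsig\cdot\bn$) is needed, and it localizes correctly along the sweep precisely because $\overline{\oma}$ is homotopic to a ball and $\Gamma$ is connected.

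The main obstacle — and the place where \emph{improving} on \cite{ern_vohralik_2020a} is claimed — is the bookkeeping of the divergence constraint near the end of the sweep and for boundary patches with intricate $\Ga$/$\Gamma$ splittings: one has to choose the free normal traces not greedily but so that, when a loop closes or an element has all faces visited, the local divergence theorem is automatically satisfied, which forces a global (patch-level) compatible choice rather than a purely local one. Concretely I would (i) first solve the \emph{unconstrained} analogue to get a piecewise-$\RT_p$ field $\tilde\bw_p$ with $\div\tilde\bw_p = g_p$ but possibly nonzero normal-trace jumps and nonzero trace on $\Gamma$, with the right $L^2$ bound; (ii) then correct the jumps and the $\Gamma$-trace by a divergence-free piecewise-$\RT_p$ field, which is the curl of a piecewise Nédélec field on the patch — this is where the topological hypothesis (homotopic to a ball, $\Gamma$ connected, $\Ga$ connected, no checkerboard patterns) enters to guarantee the jump data is a coboundary and the correction is $p$-robustly bounded via the $\BH(\ccurl)$-type results developed later in the paper. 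Since the statement is credited to \cite{ern_vohralik_2020a}, I would ultimately either cite that proof directly or re-derive it as a corollary of the unified machinery of Sections~\ref{section_interior_patches}--\ref{section_boundary_patches}.
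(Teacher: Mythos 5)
First, a point of reference: the paper does not actually prove this statement. By its own labeling convention, ``Proposition'' marks known results, and Proposition~\ref{prop_Hd_cons} is quoted from \cite[Corollaries~3.3 and~3.8]{ern_vohralik_2020a}; only the boundary-patch geometries excluded there are claimed to be recovered, implicitly, by the machinery of Section~\ref{section_boundary_patches}. Measured against the cited proof and against the paper's detailed proof of the $\BH(\ccurl)$ analogue (Theorem~\ref{thm_Hc_cons}), your plan identifies the correct strategy: an element-by-element sweep along an enumeration as in Proposition~\ref{prop_enumeration}, single-tetrahedron $p$-robust extensions, comparison with the continuous minimizer element by element, and the role of $r_p\in L^2_{0,\Ga}(\oma)$ in the compatibility on the final element. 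But as written it is a plan with the two genuinely hard steps left open, plus a technical slip at the start. The slip: $\btau_p\in\RT_p(\CTa)$ is a \emph{broken} field, so $\bsig\eq\bw^\star-\btau_p$ does \emph{not} lie in $\BH(\ddiv,\oma)$, and a conforming $\bw_p^{\rm loc}$ with zero normal trace on $\Gamma$ would not make $\btau_p+\bw_p^{\rm loc}$ an admissible competitor; the correct reduction is the broken-extension reformulation with prescribed jumps (the unlabeled remark on stable broken polynomial extensions in Section~\ref{section_results}), which you gesture at later but never set up.

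The substantive gap is that ``choose the free normal traces not greedily but so that the local divergence theorem is automatically satisfied when a loop closes'' restates the difficulty rather than resolving it. In the paper's treatment of the analogous curl problem, this is exactly the content of the two- and three-color refinements (Propositions~\ref{prop_two_color} and~\ref{prop_three_color}) together with the signed folding Piola constructions \eqref{eq_R_c}--\eqref{eq_R_d}, which simultaneously produce an admissible polynomial boundary datum \emph{and} an admissible continuous competitor with $\kappa_{\CTa}$-controlled norm on the loop-closing and final elements; nothing in your sketch supplies an equivalent device. Your fallback route (i)--(ii) --- correct the jumps of an unconstrained broken solution by a divergence-free field written as the curl of a N\'ed\'elec field, bounded ``via the $\BH(\ccurl)$-type results developed later'' --- is circular if the result invoked is Corollary~\ref{cor_Hc}, since that corollary is deduced in the paper \emph{from} Proposition~\ref{prop_Hd_cons}; and if one invokes Theorem~\ref{thm_Hc_cons} instead, one must still realize the normal-trace jump data as compatible tangential-trace data with $p$-robust control, which is not easier than the original problem. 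Finally, boundary patches are not addressed at all: the footnote to the proposition says precisely that the previously excluded configurations are handled by the equivalence/extension/symmetrization apparatus of Section~\ref{section_boundary_patches} (parachute patches, Tutte embedding, plane symmetrizations), and none of that appears in your plan. Citing \cite{ern_vohralik_2020a} directly, as you suggest at the end, is indeed what the paper does --- but then the interesting part is exactly the boundary-patch completion that your sketch omits.
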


Our first new result is an easy consequence of the Proposition~\ref{prop_Hd_cons}
and treats unconstrained minimization in $\BH(\ccurl,\oma)$.

\begin{corollary}[Unconstrained minimization in $\BH_{0,\Gamma}(\ccurl,\oma)$]
\label{cor_Hc}
For all $p \geq 0$ and all $\btau_p \in \RT_p(\CTa)$, we have
\begin{equation*}
\min_{\bv_p \in \ND_p(\CTa) \cap \BH_{0,\Gamma}(\ccurl,\oma)}
\|\curl \bv_p - \btau_p\|_{\oma}
\leq
C(\kappa_{\CTa})
\min_{\bv \in \BH_{0,\Gamma}(\ccurl,\oma)}
\|\curl \bv - \btau_p\|_{\oma}.
\end{equation*}
\end{corollary}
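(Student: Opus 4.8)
The plan is to reduce Corollary~\ref{cor_Hc} to Proposition~\ref{prop_Hd_cons} by exploiting the exactness of the de Rham sequence on $\oma$. First I would observe that, since $\overline{\oma}$ is homotopic to a ball and $\Gamma$ is connected, the relevant (co)homology groups vanish, so that $\bw \in \BH_{0,\Gamma}(\ddiv,\oma)$ satisfies $\div \bw = 0$ if and only if $\bw = \curl \bv$ for some $\bv \in \BH_{0,\Gamma}(\ccurl,\oma)$; the same holds at the discrete level, $\bw_p \in \RT_p(\CTa) \cap \BH_{0,\Gamma}(\ddiv,\oma)$ with $\div \bw_p = 0$ if and only if $\bw_p = \curl \bv_p$ for some $\bv_p \in \ND_p(\CTa) \cap \BH_{0,\Gamma}(\ccurl,\oma)$ (the discrete de Rham complex on the patch is exact under our topological assumptions). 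Consequently the two minimization sets in Corollary~\ref{cor_Hc} are exactly $\{\bw \in \BH_{0,\Gamma}(\ddiv,\oma) : \div\bw = 0\}$ and $\{\bw_p \in \RT_p(\CTa)\cap\BH_{0,\Gamma}(\ddiv,\oma): \div\bw_p = 0\}$, respectively, up to the change of variable $\bw = \curl\bv$. This is precisely the content of Proposition~\ref{prop_Hd_cons} with the data $r_p \eq 0 \in \CP_p(\CTa)\cap L^2_{0,\Ga}(\oma)$.

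In more detail, I would argue as follows. Let $\bv \in \BH_{0,\Gamma}(\ccurl,\oma)$ be a minimizer of the right-hand side; set $\bw \eq \curl\bv \in \BH_{0,\Gamma}(\ddiv,\oma)$, which satisfies $\div\bw = 0$. Apply Proposition~\ref{prop_Hd_cons} with $r_p = 0$ and the given $\btau_p$: there exists $\bw_p \in \RT_p(\CTa)\cap\BH_{0,\Gamma}(\ddiv,\oma)$ with $\div\bw_p = 0$ and $\|\bw_p - \btau_p\|_{\oma} \leq C(\kappa_{\CTa})\|\bw - \btau_p\|_{\oma}$. By the discrete exactness, $\bw_p = \curl\bv_p$ for some $\bv_p \in \ND_p(\CTa)\cap\BH_{0,\Gamma}(\ccurl,\oma)$. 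Then
\[
\min_{\bv_p \in \ND_p(\CTa)\cap\BH_{0,\Gamma}(\ccurl,\oma)} \|\curl\bv_p - \btau_p\|_{\oma}
\leq \|\curl\bv_p - \btau_p\|_{\oma}
= \|\bw_p - \btau_p\|_{\oma}
\leq C(\kappa_{\CTa})\|\bw - \btau_p\|_{\oma},
\]
and $\|\bw - \btau_p\|_{\oma} = \|\curl\bv - \btau_p\|_{\oma}$ equals the right-hand side, which closes the argument.

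The only genuinely nontrivial ingredient is the exactness of the discrete de Rham complex $\ND_p(\CTa)\cap\BH_{0,\Gamma}(\ccurl,\oma) \to \RT_p(\CTa)\cap\BH_{0,\Gamma}(\ddiv,\oma) \to \CP_p(\CTa)\cap L^2(\oma)$ on the patch with the partial boundary condition on $\Gamma$, i.e.\ that a divergence-free discrete $\BH_{0,\Gamma}(\ddiv)$ field on $\oma$ is the curl of a discrete $\BH_{0,\Gamma}(\ccurl)$ field. The continuous version is classical once one knows the de Rham cohomology of $(\oma,\Gamma)$ vanishes, which follows from $\overline{\oma}$ being homotopic to a ball together with $\Gamma$ and $\Ga$ being connected (so the relative second cohomology is trivial). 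The discrete version holds because the finite element de Rham complex on a contractible patch mesh with a connected boundary portion carried by a sub-mesh has the same cohomology as the domain pair (this is standard finite element exterior calculus, see e.g.\ the references on the de Rham diagram cited in the introduction). I expect the main obstacle, or at least the point requiring the most care, to be verifying this discrete exactness uniformly over the admissible patch topologies — in particular checking that the boundary-condition bookkeeping on $\Gamma$ versus $\Ga$ is consistent between the $\BH(\ccurl)$ and $\BH(\ddiv)$ levels — but this is bookkeeping rather than a new estimate, and crucially it introduces no $p$-dependence, so the constant $C(\kappa_{\CTa})$ is inherited verbatim from Proposition~\ref{prop_Hd_cons}.
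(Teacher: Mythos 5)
Your proposal is correct and follows essentially the same route as the paper: both reduce the unconstrained $\BH(\ccurl)$ minimization to Proposition~\ref{prop_Hd_cons} with $r_p=0$ via the exactness of the (continuous and discrete) de Rham complexes on $\oma$ with boundary conditions on $\Gamma$, which the paper likewise justifies by the topological assumptions on $\oma$, $\Gamma$ and standard finite element exterior calculus references. No gaps.
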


\begin{proof}
We proceed as in~\cite[proof of Theorem~1]{chaumontfrelet_vohralik_2021a}.
From our assumptions in Section~\ref{sec_VP}, $\oma$ is such that $\overline{\oma}$
is homotopic to a ball, the boundary $\partial \oma$ is Lipschitz, and $\Gamma$ is connected
and has a Lipschitz boundary. Thus, it follows that the range of the curl
operator $\curl$ acting on $\BH_{0,\Gamma}(\ccurl,\oma)$ is exactly the kernel of the divergence operator
on $\BH_{0,\Gamma}(\ddiv,\oma)$, and a similar property holds for the discrete spaces
$\ND_p(\CTa) \cap \BH_{0,\Gamma}(\ccurl,\oma)$ and
$\RT_p(\CTa) \cap \BH_{0,\Gamma}(\ddiv,\oma)$, see, e.g.,
\cite{Arn_Falk_Winth_FEC_06,Bof_Brez_For_MFEs_13,fernandes_gilardi_1997a}.
Then, the result follows from Proposition~\ref{prop_Hd_cons}, since
\begin{align*}
\min_{\bv_p \in \ND_p(\CTa) \cap \BH_{0,\Gamma}(\ccurl,\oma)}
\|\curl \bv_p - \btau_p\|_{\oma}
&=
\min_{\substack{\bw_p \in \RT_p(\CTa) \cap \BH_{0,\Gamma}(\ddiv,\oma) \\ \div \bw_p = 0}}
\|\bw_p - \btau_p\|_{\oma}
\\
&\leq
C(\kappa_{\CTa})
\min_{\substack{\bw \in \BH_{0,\Gamma}(\ddiv,\oma) \\ \div \bw = 0}}
\|\bw - \btau_p\|_{\oma}
\\
&=
C(\kappa_{\CTa})
\min_{\bv \in \BH_{0,\Gamma}(\ccurl,\oma)}
\|\curl \bv - \btau_p\|_{\oma}.
\end{align*}
\end{proof}

The central result of this work is the following theorem which addresses
constrained minimization in $\BH_{0,\Gamma}(\ccurl,\oma)$. Its proof is lengthy, and
postponed to Sections~\ref{section_preliminary}--\ref{section_boundary_patches}.

\begin{theorem}[Constrained minimization in $\BH_{0,\Gamma}(\ccurl,\oma)$]
\label{thm_Hc_cons}
For all $p \geq 0$, $\bchi_p \in \ND_p(\CTa)$, and
$\bj_p \in \RT_p(\CTa) \cap \BH_{0,\Gamma}(\ddiv,\oma)$ with $\div \bj_p = 0$,
we have
\begin{equation*}
\min_{\substack{
\bv_p \in \ND_p(\CTa) \cap \BH_{0,\Gamma}(\ccurl,\oma)
\\
\curl \bv_p = \bj_p
}} \|\bv_p - \bchi_p\|_{\oma}
\leq
C(\kappa_{\CTa})
\min_{\substack{
\bv \in \BH_{0,\Gamma}(\ccurl,\oma)
\\
\curl \bv = \bj_p
}} \|\bv - \bchi_p\|_{\oma}.
\end{equation*}
\end{theorem}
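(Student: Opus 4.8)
The plan is to proceed by induction on the number of tetrahedra in the patch, peeling off one tetrahedron at a time, as in the classical arguments of \cite{ern_vohralik_2020a} and \cite{chaumontfrelet_ern_vohralik_2021a}; the main work will be a single-element $p$-robust extension in $\BH(\ccurl)$ that respects a prescribed curl, combined with a careful handling of the constraint $\curl\bv_p=\bj_p$. Let $\bv\in\BH_{0,\Gamma}(\ccurl,\oma)$ be a (near-)minimizer of the right-hand side with $\curl\bv=\bj_p$. First I would fix an enumeration $K_1,\dots,K_n$ of the patch such that each partial union $\oma^{(j)}\eq\operatorname{int}(\bigcup_{i\le j}K_i)$ is again (topologically) a patch with connected boundary decomposition; face-connectedness of $\CTa$ and the homotopy-to-a-ball hypothesis make such an ordering available. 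The goal of step $j$ is to produce $\bv_p^{(j)}\in\ND_p(\CTa)$, supported on $\bigcup_{i\le j}K_i$, satisfying $\curl\bv_p^{(j)}=\bj_p$ on that subdomain, with the correct tangential trace on the already-exposed part of the patch boundary, and with $\|\bv_p^{(j)}-\bchi_p\|$ controlled by $\|\bv-\bchi_p\|_{\oma^{(j)}}$ up to a $p$-independent constant accumulating a bounded number of times (bounded because $n\le C(\kappa_{\CTa})$).

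For the single-element step, on $K_j$ I must solve: given the tangential trace already fixed on the faces of $K_j$ shared with $K_1,\dots,K_{j-1}$ (and on any boundary faces in $\Gamma$), and given the target curl $\bj_p|_{K_j}\in\RT_p(K_j)$ with $\div\bj_p=0$, find $\bv_p\in\ND_p(K_j)$ matching these data with norm bounded by the corresponding continuous quantity. I would split this into two pieces. The curl constraint is handled by the exact sequence on the single tetrahedron: since $\bj_p$ is divergence-free, there is a discrete $\ND_p(K_j)$ potential $\bz_p$ with $\curl\bz_p=\bj_p$, obtainable $p$-robustly from $\bj_p$ by the results underlying Proposition~\ref{prop_Hd_cons} (the volume/normal-trace extensions of \cite{Cost_McInt_Bog_Poinc_10,Demk_Gop_Sch_ext_II_09,Demk_Gop_Sch_ext_III_12}). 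After subtracting $\bz_p$, the remaining problem is \emph{curl-free}: extend a prescribed tangential trace (a surface gradient plus, possibly, a cohomology piece) on part of $\partial K_j$ into a curl-free $\ND_p(K_j)$ field, i.e.\ into a gradient $\grad q_p$ with $q_p\in\CP_{p+1}(K_j)$, stably in $p$. This last reduction is exactly the $H^1$ trace-extension problem on a tetrahedron, which is $p$-robust by \cite{Demk_Gop_Sch_ext_I_09} and is the engine behind \eqref{eq_unconstrained_minimization}; one invokes it to build $q_p$ with controlled $\|\grad q_p\|_{K_j}$, then sets $\bv_p|_{K_j}\eq\bz_p+\grad q_p$ corrected by the incoming trace. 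Summing the element contributions and using the uniform bound on $n$ and on the geometric constants gives the claim.

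The main obstacle — and the reason the theorem is stated as the central contribution — is the bookkeeping of tangential traces along \emph{edges} of the patch, not just faces. Unlike the $\BH(\ddiv)$ case, where only normal traces on faces must be matched and the single-element extension is ``one face at a time,'' in $\BH(\ccurl)$ the tangential trace on a face of $K_j$ must be compatible, along each shared edge, with the trace already imposed on the neighboring, previously processed faces; the $p$-robust single-tetrahedron curl-conforming extension therefore has to take prescribed data on a \emph{union} of faces (with matching edge data) rather than on one isolated face, and must output a field whose curl equals a prescribed $\RT_p$ function. Establishing such an extension with a $p$-independent constant — essentially an $\BH(\ccurl)$ analogue, on the tetrahedron with partial boundary data, of the results of \cite{Demk_Gop_Sch_ext_II_09,Demk_Gop_Sch_ext_III_12} — is where the real analytic effort lies, and it is presumably what Sections~\ref{section_preliminary}--\ref{section_boundary_patches} carry out, distinguishing interior patches (where $\Gamma=\partial\oma$) from boundary patches (where the opening and the split $\partial\oma=\Gamma\cup\overline\Ga$ force one to begin the enumeration at the tetrahedra touching $\Ga$ and to keep $\Gamma$, $\Ga$ connected at every stage). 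I would expect the boundary-patch case to also require an auxiliary extension across the ``opening'' cone, mirroring the device used in \cite{ern_vohralik_2020a}, to reduce to the interior-patch argument.
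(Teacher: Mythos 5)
Your overall skeleton for interior patches -- a sweep through the tetrahedra following a suitable enumeration, with a constrained single-element minimization at each step -- is indeed the strategy of the paper (Section~\ref{sec_main_proof}, built on the enumeration of Proposition~\ref{prop_enumeration} and the one-tetrahedron result of Lemma~\ref{lemma_tetrahedron}). However, you have misplaced where the difficulty lies, and the two steps you leave implicit are precisely the ones that carry the proof. First, the single-tetrahedron extension with prescribed curl and prescribed tangential trace on a union of faces is \emph{not} what this paper establishes: it is taken as known from \cite[Theorem~2]{chaumontfrelet_ern_vohralik_2020a} (quoted here as Lemma~\ref{lemma_tetrahedron} after a shift by $\bchi_p$), so your proposed re-derivation via a discrete potential plus an $H^1$ trace lifting is unnecessary and, as sketched, does not address how the gradient part is made to match edge-compatible data on several faces simultaneously. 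What the paper must actually prove at each step $j$ is (a) that the boundary data $\br_p^j$ assembled from the already-processed neighbors satisfies the compatibility condition~\eqref{eq_compatible_data_trace}, i.e.\ is the trace of a \emph{single} field in $\ND_p(K_j)$, and (b) that the resulting local minimum is controlled by the \emph{global} continuous minimizer. Both are nontrivial exactly when $K_j$ closes a loop around an edge ($|\CF_j^\sharp|=2$) or is the last element ($|\CF_j^\sharp|=3$); the paper resolves them with the ``folding'' alternating-sign Piola mappings~\eqref{eq_R_c} and~\eqref{eq_R_d}, which require the two- and three-color refinements of Propositions~\ref{prop_two_color} and~\ref{prop_three_color}, and with an analogous folded competitor $\bw_j^\star$ built from the continuous minimizer (Steps~3b--3d, using the weak trace characterization~\eqref{eq_weak_trace}). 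Your proposal contains no mechanism for either point, and without one the induction does not close.

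Second, your treatment of boundary patches (``begin the enumeration at the tetrahedra touching $\Ga$'' plus ``an auxiliary extension across the opening cone'') is not how the paper proceeds and would not obviously work, since a single reflection cannot in general turn an arbitrary boundary patch with mixed essential/natural faces into an interior one. The paper instead maps the boundary patch to a reference parachute patch, invokes Tutte's embedding theorem~\cite{tutte_1963a} (Appendix~\ref{section_triangular_representation}) to flatten the induced planar triangulation onto a reference triangle -- after first extending around ``problematic'' vertices via Lemma~\ref{lemma_vertex_extension} so that the graph is triconnected -- and only then symmetrizes three times around coordinate planes (Lemma~\ref{lemma_symmetrization}, Corollary~\ref{corollary_tetrahedron}) to reduce to the interior case. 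This entire reduction, together with the norm bookkeeping for the extension/restriction operators, is absent from your proposal and constitutes a genuine gap rather than a stylistic difference.
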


Our last result concerns unconstrained minimization in $H^1(\oma)$. It generalizes
the result previously obtained in~\cite[Corollaries~3.1 and~3.7]{ern_vohralik_2020a},
which was limited to the case where $\bchi_p = \grad \chi_p$ for $\chi_p \in \CP_{p+1}(\CTa)$
with $\chi_p = 0$ on $\Gamma$, and where the geometrical setting of boundary patches had some
restrictions.

\begin{corollary}[Unconstrained minimization in $H^1_{0,\Gamma}(\oma)$]
\label{cor_H1}
For all $p \geq 0$ and all $\bchi_p \in \ND_p(\CTa)$, we have
\begin{equation*}
\min_{v_p \in \CP_{p+1}(\CTa) \cap H^1_{0,\Gamma}(\oma)} \|\grad v_p - \bchi_p\|_{\oma}
\leq
C(\kappa_{\CTa})
\min_{v \in H^1_{0,\Gamma}(\oma)} \|\grad v - \bchi_p\|_{\oma}.
\end{equation*}
\end{corollary}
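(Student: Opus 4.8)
The plan is to obtain Corollary~\ref{cor_H1} as a direct consequence of Theorem~\ref{thm_Hc_cons}, exactly in the way Corollary~\ref{cor_Hc} was deduced from Proposition~\ref{prop_Hd_cons}. The starting point is to apply Theorem~\ref{thm_Hc_cons} with the trivial constraint datum $\bj_p \eq \bzero$, which is admissible because $\bzero \in \RT_p(\CTa) \cap \BH_{0,\Gamma}(\ddiv,\oma)$ and $\div \bzero = 0$, and with the given $\bchi_p \in \ND_p(\CTa)$. This gives
\begin{equation*}
\min_{\substack{\bv_p \in \ND_p(\CTa) \cap \BH_{0,\Gamma}(\ccurl,\oma) \\ \curl \bv_p = \bzero}}
\|\bv_p - \bchi_p\|_{\oma}
\;\leq\;
C(\kappa_{\CTa})
\min_{\substack{\bv \in \BH_{0,\Gamma}(\ccurl,\oma) \\ \curl \bv = \bzero}}
\|\bv - \bchi_p\|_{\oma} .
\end{equation*}

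It then remains to identify the two constrained sets appearing here with gradient spaces. On the continuous level, the standing assumptions of Section~\ref{sec_VP} ($\overline{\oma}$ homotopic to a ball, $\partial \oma$ Lipschitz, $\Gamma$ connected with Lipschitz boundary) make the relevant portion of the De Rham complex with essential boundary condition on $\Gamma$ exact, i.e.,
\begin{equation*}
\{ \bv \in \BH_{0,\Gamma}(\ccurl,\oma) \;:\; \curl \bv = \bzero \}
=
\grad\, H^1_{0,\Gamma}(\oma),
\end{equation*}
see, e.g., \cite{fernandes_gilardi_1997a}; hence the right-hand side above equals $C(\kappa_{\CTa}) \min_{v \in H^1_{0,\Gamma}(\oma)} \|\grad v - \bchi_p\|_{\oma}$. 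On the discrete level, the conforming finite-element De Rham subcomplex associated with the simplicial patch $\CTa$ is exact as well, so that $\{ \bv_p \in \ND_p(\CTa) \cap \BH_{0,\Gamma}(\ccurl,\oma) : \curl \bv_p = \bzero \} = \grad \big( \CP_{p+1}(\CTa) \cap H^1_{0,\Gamma}(\oma) \big)$, see, e.g., \cite{Arn_Falk_Winth_FEC_06,Bof_Brez_For_MFEs_13}; thus the left-hand side above equals $\min_{v_p \in \CP_{p+1}(\CTa) \cap H^1_{0,\Gamma}(\oma)} \|\grad v_p - \bchi_p\|_{\oma}$. Chaining these identities with the displayed inequality yields the claim.

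Once Theorem~\ref{thm_Hc_cons} is in hand there is essentially no remaining difficulty: all the delicate geometry of boundary patches is handled inside that theorem, and the reasoning above uses $\bchi_p$ only as a member of the broken space $\ND_p(\CTa)$, with no assumption that it be a (broken) gradient --- which is precisely what removes the restriction $\bchi_p = \grad \chi_p$ present in \cite{ern_vohralik_2020a}. The single point that deserves care is the pair of exactness statements with the mixed boundary condition on $\Gamma$; these hold because the topological hypotheses of Section~\ref{sec_VP} force the relative first cohomology of the pair $(\oma, \Gamma)$, and of its finite-element analogue on $\CTa$, to vanish. I expect this cohomological bookkeeping to be the only thing requiring attention, the rest being routine.
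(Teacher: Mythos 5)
Your proposal is correct and follows essentially the same route as the paper: apply Theorem~\ref{thm_Hc_cons} with $\bj_p = \bzero$ and identify the continuous and discrete curl-free subspaces of $\BH_{0,\Gamma}(\ccurl,\oma)$ with the corresponding gradient spaces via the exactness of the (continuous and finite-element) de Rham complexes under the topological assumptions of Section~\ref{sec_VP}. The only difference is that you make the choice $\bj_p = \bzero$ and the cohomological justification explicit, which the paper leaves implicit.
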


\begin{proof}
We proceed as in~\cite[proof of Theorem~2]{chaumontfrelet_vohralik_2021a}, similarly as
above in Corollary~\ref{cor_Hc}. Because the patch subdomain $\oma$ is
such that $\overline{\oma}$ is homotopic to a ball, the boundary $\partial \oma$
is Lipschitz, and $\Gamma$ is connected and has a Lipschitz boundary,
the kernel of the curl operator in $\BH_{0,\Gamma}(\ccurl,\oma)$ is exactly
$\grad (H^1_{0,\Gamma}(\oma))$, so that
\begin{equation*}
\min_{v \in H^1_{0,\Gamma}(\oma)} \|\grad v - \bchi_p\|_{\oma}
=
\min_{\substack{\bv \in \BH_{0,\Gamma}(\ccurl,\oma) \\ \curl \bv = \bzero}}
\|\bv - \bchi_p\|_{\oma}.
\end{equation*}
Similarly, at the discrete level, the equality
\begin{equation*}
\min_{v_p \in \CP_{p+1}(\CTa) \cap H^1_{0,\Gamma}(\oma)} \|\grad v_p - \bchi_p\|_{\oma}
=
\min_{\substack{\bv_p \in \ND_p(\CTa) \cap \BH_{0,\Gamma}(\ccurl,\oma) \\ \curl \bv_p = \bzero}}
\|\bv_p - \bchi_p\|_{\oma}
\end{equation*}
holds true, see, e.g., \cite{Arn_Falk_Winth_FEC_06,Bof_Brez_For_MFEs_13,fernandes_gilardi_1997a}.
Then the result follows from Theorem~\ref{thm_Hc_cons}.
\end{proof}

\begin{remark}[Converse inequalities]
The converse inequalities to all the statements above trivially hold with constant one.
\end{remark}

\begin{remark}[Unconstrained $L^2(\oma)$ and constrained $H^1(\oma)$ minimizations]
In principle, we could consider two additional minimization problems with the considered spaces,
namely (i) the unconstrained minimization in $L^2(\oma)$; and (ii) the constrained minimization
in $H^1(\oma)$. However, these problems are trivial, since in both cases, the continuous and
discrete minimizers are the same. Specifically, we have
\begin{equation*}
\min_{q_p \in \CP_{p}(\CTa)}
\|q_p-r_p\|_{\oma}
=
\min_{q \in L^2(\oma)}
\|q-r_p\|_{\oma}
=
0
\end{equation*}
for all $r_p \in \CP_p(\CTa)$, as well as
\begin{equation*}
\min_{\substack{
v_p \in \CP_{p+1}(\CTa) \cap H^1_{0,\Gamma}(\oma)
\\
\grad v_p = \bg_p
}}
\|v_p-\chi_p\|_{\oma}
=
\min_{\substack{
v \in H^1_{0,\Gamma}(\oma)
\\
\grad v = \bg_p
}}
\|v-\chi_p\|_{\oma}
\end{equation*}
for all $\chi_p \in \CP_{p+1}(\CTa)$ and $\bg_p \in \ND_p(\CTa) \cap \BH_{0,\Gamma}(\ccurl,\oma)$
such that $\curl \bg_p = \bzero$. We refer to~\cite[Section~3.3]{chaumontfrelet_vohralik_2021a}
for some more considerations in this direction.
\end{remark}

\begin{remark}[Stable broken polynomial extensions]
The minimization problems considered above can be equivalently formulated as broken
polynomial extensions as initially stated in~\cite{braess_pillwein_schoberl_2009a},
where a discontinuous minimizer with prescribed jumps is sought for instead of a
conforming one as above. The two formulations are actually equivalent up to a shift,
as shown in~\cite[Section~3.1]{ern_vohralik_2020a}
or~\cite[Lemma 6.8]{chaumontfrelet_ern_vohralik_2021a}.
\end{remark}

\section{Extension to inhomogeneous boundary conditions}
\label{sec_inhom_BC}

Proposition~\ref{prop_Hd_cons}, Corollary~\ref{cor_Hc}, Theorem~\ref{thm_Hc_cons}, and
Corollary~\ref{cor_H1} are only stated for homogeneous boundary conditions on the $\Gamma$
part of the boundary of $\oma$. Supposing inhomogeneous boundary conditions that are suitable
piecewise polynomials, these can be lifted to see that the above theory also covers this case.
We now present the equivalent reformulations together with their proofs. In place of the boundary
data, we rather start directly from the liftings, denoted as $\bsig_p$ and $\sigma_p$ below.
These results are in practice particularly useful in the case of boundary patches, where the
inhomogeneous boundary conditions of the patch problems stem from inhomogeneous Dirichlet, Neumann or (homogeneous) Robin boundary conditions of the original partial differential equation
(cf. respectively the discussion in~\cite[Section~4]{ern_vohralik_2020a} and in~\cite{chaumontfrelet_ern_vohralik_2021b}). More precisely, in the applications,
inhomogeneous boundary conditions only appear on the part of $\Gamma$ corresponding to the faces sharing the vertex $\ba$,
which is of course covered by the presentation here.

We start with the $\BH(\ddiv,\oma)$-case of Proposition~\ref{prop_Hd_cons}.
For the datum $\bsig_p$ given below, we say that $\bw \scp \bn_{\oma} = \bsig_p \scp \bn_{\oma}$ on $\Gamma$ if $\bw-\bsig_p \in \BH_{0,\Gamma}(\ddiv,\oma)$

\begin{corollary}[Constrained minimization in $\BH(\ddiv,\oma)$ with inhomogeneous boundary conditions]
\label{cor_Hd_cons_B}
For all $p \geq 0$, $\btau_p \in \RT_p(\CTa)$, $\bsig_p \in \RT_p(\CTa) \cap \BH(\ddiv,\oma)$,
and $r_p \in \CP_p(\CTa)$ with the additional condition
$(\bsig_p \scp \bn_{\oma},1)_{\pt \oma} = (r_p,1)_{\oma}$ if $\Ga = \emptyset$,
we have
\begin{equation*}
\min_{\substack{
\bw_p \in \RT_p(\CTa) \cap \BH(\ddiv,\oma)
\\
\div \bw_p = r_p
\\
\bw_p \scp \bn_{\oma} = \bsig_p \scp \bn_{\oma} \text{ on } \Gamma
}}
\|\bw_p - \btau_p\|_{\oma}
\leq
C(\kappa_{\CTa})
\min_{\substack{
\bw \in \BH(\ddiv,\oma)
\\
\div \bw = r_p
\\
\bw \scp \bn_{\oma} = \bsig_p \scp \bn_{\oma} \text{ on } \Gamma
}}
\|\bw - \btau_p\|_{\oma}.
\end{equation*}
\end{corollary}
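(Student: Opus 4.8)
The strategy is to reduce Corollary~\ref{cor_Hd_cons_B} to the homogeneous statement of Proposition~\ref{prop_Hd_cons} by subtracting the lifting $\bsig_p$. Set $\btau_p' \eq \btau_p - \bsig_p$ and $r_p' \eq r_p - \div \bsig_p$. Since $\bsig_p \in \RT_p(\CTa)$, we have $\btau_p' \in \RT_p(\CTa)$; since moreover $\bsig_p \in \BH(\ddiv,\oma)$, the elementwise divergence $\div \bsig_p$ is a single-valued function that belongs to $\CP_p(K)$ on each $K \in \CTa$, so $r_p' \in \CP_p(\CTa)$. If $\Ga \ne \emptyset$, then $L^2_{0,\Ga}(\oma) = L^2(\oma)$ and there is nothing more to check; if $\Ga = \emptyset$, the divergence theorem together with the compatibility assumption yields $(r_p',1)_{\oma} = (r_p,1)_{\oma} - (\div\bsig_p,1)_{\oma} = (r_p,1)_{\oma} - (\bsig_p\scp\bn_{\oma},1)_{\partial\oma} = 0$, so that $r_p' \in \CP_p(\CTa) \cap L^2_{0,\Ga}(\oma)$ in all cases.

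Next I would identify the feasible sets. By the definition recalled just before the corollary, $\bw \scp \bn_{\oma} = \bsig_p \scp \bn_{\oma}$ on $\Gamma$ means $\bw - \bsig_p \in \BH_{0,\Gamma}(\ddiv,\oma)$, and $\div(\bw - \bsig_p) = r_p'$ precisely when $\div\bw = r_p$. Hence $\bw \mapsto \bz \eq \bw - \bsig_p$ is a bijection from $\{\bw \in \BH(\ddiv,\oma) : \div\bw = r_p,\ \bw\scp\bn_{\oma} = \bsig_p\scp\bn_{\oma}\text{ on }\Gamma\}$ onto $\{\bz \in \BH_{0,\Gamma}(\ddiv,\oma) : \div\bz = r_p'\}$; because $\bsig_p \in \RT_p(\CTa)$, this same map carries the discrete admissible set (the intersection of the former with $\RT_p(\CTa)$) onto the discrete admissible set for the shifted data. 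Moreover $\|\bw - \btau_p\|_{\oma} = \|\bz - \btau_p'\|_{\oma}$ for $\bz = \bw - \bsig_p$, so the two minimizations appearing in the corollary coincide, term by term, with the two minimizations for the data $\btau_p'$, $r_p'$ in Proposition~\ref{prop_Hd_cons}.

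With these identifications in hand, applying Proposition~\ref{prop_Hd_cons} to $\btau_p' \in \RT_p(\CTa)$ and $r_p' \in \CP_p(\CTa) \cap L^2_{0,\Ga}(\oma)$ gives exactly the claimed inequality, with the same constant $C(\kappa_{\CTa})$; nonemptiness of the feasible sets (so that all four minima are finite) is the standard surjectivity of $\div$ from $\RT_p(\CTa) \cap \BH_{0,\Gamma}(\ddiv,\oma)$ onto $\CP_p(\CTa) \cap L^2_{0,\Ga}(\oma)$, which is already implicit in Proposition~\ref{prop_Hd_cons}. I do not expect a genuine obstacle here: the argument is a routine lifting, and the only points deserving attention are the mean-value compatibility in the case $\Ga = \emptyset$ and the fact that $\div\bsig_p$ is a single-valued piecewise polynomial of degree at most $p$ — together these ensure that the shifted datum $r_p'$ falls within the admissible class required by Proposition~\ref{prop_Hd_cons}.
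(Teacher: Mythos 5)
Your proposal is correct and follows essentially the same route as the paper: a shift by the lifting $\bsig_p$, verification that $\tilde r_p = r_p - \div\bsig_p \in \CP_p(\CTa)\cap L^2_{0,\Ga}(\oma)$ and $\tilde\btau_p = \btau_p - \bsig_p \in \RT_p(\CTa)$ are eligible data, and an application of Proposition~\ref{prop_Hd_cons} with identical minimization values. Your version just spells out the divergence-theorem computation and the bijection of feasible sets a bit more explicitly than the paper does.
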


\begin{proof}
We show the equivalence with Proposition~\ref{prop_Hd_cons}, by a shift by the piecewise
polynomial datum $\bsig_p$. Indeed, suppose the setting of Corollary~\ref{cor_Hd_cons_B};
the converse direction is similar. Let $\bw = \bw^0 + \bsig_p$ with
$\bw^0 \in \BH_{0,\Gamma}(\ddiv,\oma)$ and $\bw_p = \bw^0_p + \bsig_p$ with
$\bw^0_p \in \RT_p(\CTa) \cap \BH_{0,\Gamma}(\ddiv,\oma)$. Note that
$\div \bsig_p \in \CP_p(\CTa)$ satisfies $(r_p - \div \bsig_p,1)_{\oma} = 0$ if $\Ga = \emptyset$.
Thus, setting $\tilde r_p \eq r_p - \div \bsig_p$ and $\tilde \btau_p \eq \btau_p - \bsig_p$,
we have $\tilde r_p \in \CP_p(\CTa) \cap L^2_{0,\Ga}(\oma)$ and $\tilde \btau_p \in \RT_p(\CTa)$.
This means that $\tilde r_p$ and $\tilde \btau_p$ are eligible data for Theorem~\ref{thm_Hc_cons},
which crucially lead to the same minimization values.
\end{proof}

The unconstrained $\BH(\ccurl,\oma)$-case of Corollary~\ref{cor_Hc} is actually easier,
since there is no differential operator constraint. Similarly to the $\BH(\ddiv,\oma)$
case, the notation $\bw \times \bn_{\oma} = \bsig_p \times \bn_{\oma}$ on $\Gamma$ means that
$\bw-\bsig_p \in \BH_{0,\Gamma}(\ccurl,\oma)$.

\begin{corollary}[Unconstrained minimization in $\BH(\ccurl,\oma)$ with inhomogeneous boundary conditions]
\label{cor_Hc_B}
For all $p \geq 0$, all $\bsig_p \in \ND_p(\CTa) \cap \BH(\ccurl,\oma)$,
and all $\btau_p \in \RT_p(\CTa)$, we have
\begin{equation*}
\min_{\substack{
\bv_p \in \ND_p(\CTa) \cap \BH(\ccurl,\oma)
\\
\bv_p \times \bn_{\oma} = \bsig_p \times \bn_{\oma} \text{ on } \Gamma
}}
\|\curl \bv_p - \btau_p\|_{\oma}
\leq
C(\kappa_{\CTa})
\min_{\substack{\bv \in \BH(\ccurl,\oma)\\
\bv \times \bn_{\oma} = \bsig_p \times \bn_{\oma} \text{ on } \Gamma}}
\|\curl \bv - \btau_p\|_{\oma}.
\end{equation*}
\end{corollary}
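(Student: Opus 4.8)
The plan is to reduce Corollary~\ref{cor_Hc_B} to the homogeneous-boundary-condition case of Corollary~\ref{cor_Hc} by a shift by the piecewise polynomial datum $\bsig_p$, in the same spirit as the proof of Corollary~\ref{cor_Hd_cons_B} above. The present case is in fact simpler: the curl is not subjected to a differential constraint in either minimization set, so no compatibility (mean value) condition on the data has to be tracked.

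First I would record the elementwise inclusion $\curl \ND_p(K) \subseteq \RT_p(K)$, valid for every $K \in \CTa$; this is a classical fact of the polynomial de Rham complex and follows at once from $\curl \bq \in \BCP_{p-1}(K)$ for $\bq \in \BCP_p(K)$ together with the identity $\curl(\bx \vp \bp) = \bx\,(\div \bp) - 2\bp - (\bx \scp \grad)\bp \in \BCP_p(K) + \bx\, \CP_p(K) = \RT_p(K)$ for $\bp \in \BCP_p(K)$. Consequently $\curl \bsig_p \in \RT_p(\CTa)$, and since $\bsig_p \in \BH(\ccurl,\oma)$ the field $\curl \bsig_p$ is single-valued on $\oma$; hence $\tilde\btau_p \eq \btau_p - \curl \bsig_p \in \RT_p(\CTa) \subset \BL^2(\oma)$ is an admissible datum for Corollary~\ref{cor_Hc}.

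Next I would use the shifts $\bv \mapsto \bv^0 \eq \bv - \bsig_p$ and $\bv_p \mapsto \bv^0_p \eq \bv_p - \bsig_p$. By the very definition of the tangential trace condition, $\bv \in \BH(\ccurl,\oma)$ with $\bv \vp \bn_{\oma} = \bsig_p \vp \bn_{\oma}$ on $\Gamma$ is equivalent to $\bv^0 \in \BH_{0,\Gamma}(\ccurl,\oma)$; and since $\bv_p, \bsig_p \in \ND_p(\CTa)$, the corresponding discrete constraint is equivalent to $\bv^0_p \in \ND_p(\CTa) \cap \BH_{0,\Gamma}(\ccurl,\oma)$. Because $\curl \bv - \btau_p = \curl \bv^0 - \tilde\btau_p$ and likewise for the discrete fields, both minimization problems in the statement have exactly the same values as, respectively,
\[
\min_{\bv^0_p \in \ND_p(\CTa) \cap \BH_{0,\Gamma}(\ccurl,\oma)} \|\curl \bv^0_p - \tilde\btau_p\|_{\oma}
\quad\text{and}\quad
\min_{\bv^0 \in \BH_{0,\Gamma}(\ccurl,\oma)} \|\curl \bv^0 - \tilde\btau_p\|_{\oma}.
\]
Applying Corollary~\ref{cor_Hc} to the datum $\tilde\btau_p$ then yields the claimed bound with the same constant $C(\kappa_{\CTa})$. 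There is no genuine obstacle here; the only points that deserve a line of justification are the inclusion $\curl \ND_p(\CTa) \subseteq \RT_p(\CTa)$ and the observation that, in contrast to Corollary~\ref{cor_Hd_cons_B}, no constraint on $\bsig_p$ needs to be imposed. The converse inequality holds trivially with constant one.
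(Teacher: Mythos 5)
Your proof is correct and follows essentially the same route as the paper: a shift by $\bsig_p$, the observation that $\curl \bsig_p \in \RT_p(\CTa)$ so that $\tilde\btau_p \eq \btau_p - \curl\bsig_p$ is an eligible datum, and an application of Corollary~\ref{cor_Hc} to the shifted problems, which have the same minimization values. The only difference is that you spell out the elementary inclusion $\curl \ND_p(K) \subseteq \RT_p(K)$, which the paper takes for granted.
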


\begin{proof}
We show the equivalence with Corollary~\ref{cor_Hc}, by a shift by the piecewise polynomial datum $\bsig_p$. Suppose the setting of Corollary~\ref{cor_Hc_B}; the converse direction is similar. Let $\bv = \bv^0 + \bsig_p$ with $\bv^0 \in \BH_{0,\Gamma}(\ccurl,\oma)$ and $\bv_p = \bv^0_p + \bsig_p$ with $\bv^0_p \in \ND_p(\CTa) \cap \BH_{0,\Gamma}(\ccurl,\oma)$. Note that $\curl \bsig_p \in \RT_p(\CTa)$ (actually also in $\BH(\ddiv,\oma)$ with $\div(\curl \bsig_p) = 0$).
Thus, setting $\tilde \btau_p \eq \btau_p - \curl \bsig_p$, we have $\tilde \btau_p \in \RT_p(\CTa)$, which is an eligible datum for Corollary~\ref{cor_Hc}, crucially leading to the same minimization values.
\end{proof}

The constrained $\BH(\ccurl,\oma)$ case of Theorem~\ref{thm_Hc_cons} is similar to the situation
of Corollary~\ref{cor_Hd_cons_B}:

\begin{corollary}[Constrained minimization in $\BH(\ccurl,\oma)$ with inhomogeneous boundary conditions]
\label{cor_Hc_cons_B}
For all $p \geq 0$, $\bchi_p \in \ND_p(\CTa)$, $\bsig_p \in \ND_p(\CTa) \cap \BH(\ccurl,\oma)$,
and $\bj_p \in \RT_p(\CTa) \cap \BH(\ddiv,\oma)$ with
$\bj_p {\cdot} \bn_{\oma} = (\curl \bsig_p) {\cdot} \bn_{\oma}$
on $\Gamma$ and $\div \bj_p = 0$, we have
\begin{equation*}
\min_{\substack{
\bv_p \in \ND_p(\CTa) \cap \BH(\ccurl,\oma)
\\
\curl \bv_p = \bj_p
\\
\bv_p \times \bn_{\oma} = \bsig_p \times \bn_{\oma} \text{ on } \Gamma
}} \|\bv_p - \bchi_p\|_{\oma}
\leq
C(\kappa_{\CTa})
\min_{\substack{
\bv \in \BH(\ccurl,\oma)
\\
\curl \bv = \bj_p
\\
\bv \times \bn_{\oma} = \bsig_p \times \bn_{\oma} \text{ on } \Gamma
}} \|\bv - \bchi_p\|_{\oma}.
\end{equation*}
\end{corollary}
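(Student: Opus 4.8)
The plan is to reduce to the homogeneous-boundary statement Theorem~\ref{thm_Hc_cons} by a shift by the piecewise-polynomial lifting $\bsig_p$, exactly mimicking the reduction carried out for $\BH(\ddiv,\oma)$ in the proof of Corollary~\ref{cor_Hd_cons_B}. First I would observe that the constraint set on the left-hand side is an affine subspace: writing $\bv_p = \bv_p^0 + \bsig_p$ with $\bv_p^0 \in \ND_p(\CTa)$, the membership $\bv_p \in \BH(\ccurl,\oma)$ together with $\bv_p \times \bn_{\oma} = \bsig_p \times \bn_{\oma}$ on $\Gamma$ is equivalent to $\bv_p^0 \in \ND_p(\CTa) \cap \BH_{0,\Gamma}(\ccurl,\oma)$, by the very definition of the inhomogeneous tangential trace condition given just before the statement. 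Likewise $\curl \bv_p = \bj_p$ becomes $\curl \bv_p^0 = \bj_p - \curl \bsig_p$. The same substitution works verbatim at the continuous level, so both minimization values are unchanged under the affine shift $\bv \mapsto \bv - \bsig_p$ followed by the objective shift $\bchi_p \mapsto \bchi_p - \bsig_p$.

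Next I would check that the shifted data are admissible for Theorem~\ref{thm_Hc_cons}. Set $\tilde \bchi_p \eq \bchi_p - \bsig_p$ and $\tilde \bj_p \eq \bj_p - \curl \bsig_p$. Since $\bsig_p \in \ND_p(\CTa) \cap \BH(\ccurl,\oma)$, we have $\curl \bsig_p \in \RT_p(\CTa) \cap \BH(\ddiv,\oma)$ with $\div(\curl \bsig_p) = 0$; combined with $\bj_p \in \RT_p(\CTa) \cap \BH(\ddiv,\oma)$ and $\div \bj_p = 0$ this gives $\tilde \bj_p \in \RT_p(\CTa) \cap \BH(\ddiv,\oma)$ with $\div \tilde \bj_p = 0$. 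The compatibility hypothesis $\bj_p \cdot \bn_{\oma} = (\curl \bsig_p) \cdot \bn_{\oma}$ on $\Gamma$ says precisely that $\bj_p - \curl \bsig_p \in \BH_{0,\Gamma}(\ddiv,\oma)$, i.e.\ $\tilde \bj_p \in \RT_p(\CTa) \cap \BH_{0,\Gamma}(\ddiv,\oma)$. And $\tilde \bchi_p \in \ND_p(\CTa)$ trivially. Thus $(\tilde \bchi_p, \tilde \bj_p)$ is an eligible pair for Theorem~\ref{thm_Hc_cons}, which then yields the desired inequality with the constant $C(\kappa_{\CTa})$ from that theorem, and the converse direction (from the inhomogeneous to the homogeneous problem) is handled by the reverse shift.

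The only genuinely non-routine point is the compatibility condition needed for the continuous constrained problem to be \emph{nonempty}: the constraint $\curl \bv = \bj_p$ with $\bv \times \bn_{\oma} = \bsig_p \times \bn_{\oma}$ on $\Gamma$ is solvable in $\BH(\ccurl,\oma)$ only if $\bj_p$ lies in the range of $\curl$ on $\BH_{0,\Gamma}(\ccurl,\oma)$ shifted by $\curl \bsig_p$; by the exactness property of the de Rham complex on $\oma$ recalled in the proof of Corollary~\ref{cor_Hc} (valid since $\overline{\oma}$ is homotopic to a ball, $\partial \oma$ is Lipschitz, and $\Gamma$ is connected with Lipschitz boundary), this range is exactly the divergence-free subspace of $\BH_{0,\Gamma}(\ddiv,\oma)$ — which is where $\tilde \bj_p$ was just shown to live. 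So nonemptiness is automatic, and likewise for the discrete problem via the discrete exact sequence; I expect the write-up to be short, with this exactness remark being the one place where the geometric assumptions on the patch are invoked.
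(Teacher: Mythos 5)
Your proposal is correct and follows essentially the same route as the paper: a shift by the polynomial lifting $\bsig_p$, the observation that $\curl\bsig_p\in\RT_p(\CTa)\cap\BH(\ddiv,\oma)$ is divergence-free with matching normal trace so that $\tilde\bj_p=\bj_p-\curl\bsig_p\in\RT_p(\CTa)\cap\BH_{0,\Gamma}(\ddiv,\oma)$ and $\tilde\bchi_p=\bchi_p-\bsig_p$ are eligible data for Theorem~\ref{thm_Hc_cons}, and the identification of the minimization values under the shift. Your closing remark on nonemptiness via the exact-sequence argument is a harmless addition not spelled out in the paper's proof.
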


\begin{proof} We show the equivalence with Theorem~\ref{thm_Hc_cons}, again by a shift by the piecewise polynomial datum $\bsig_p$. Suppose the setting of Corollary~\ref{cor_Hc_cons_B}; the converse direction is similar. Let $\bv = \bv^0 + \bsig_p$ with $\bv^0 \in \BH_{0,\Gamma}(\ccurl,\oma)$ and $\bv_p = \bv^0_p + \bsig_p$ with $\bv^0_p \in \ND_p(\CTa) \cap \BH_{0,\Gamma}(\ccurl,\oma)$. Note that $\curl \bsig_p \in \RT_p(\CTa) \cap \BH(\ddiv,\oma)$ with $\div(\curl \bsig_p) = 0$ and $(\curl \bsig_p) {\cdot} \bn_{\oma} = \bj_p {\cdot} \bn_{\oma}$ on $\Gamma$.
Thus, setting $\tilde \bj_p \eq \bj_p - \curl \bsig_p$ and $\tilde \bchi_p \eq \bchi_p - \bsig_p$, we have $\tilde \bj_p \in \RT_p(\CTa) \cap \BH_{0,\Gamma}(\ddiv,\oma)$ with $\div \tilde \bj_p = 0$ and $\tilde \bchi_p \in \ND_p(\CTa)$. This means that $\tilde \bj_p$ and $\tilde \bchi_p$ are eligible data for Theorem~\ref{thm_Hc_cons}, which crucially lead to the same minimization values.
\end{proof}

We now finally present how Corollary~\ref{cor_H1} covers inhomogeneous boundary conditions:

\begin{corollary}[Unconstrained minimization in $H^1(\oma)$ with inhomogeneous boundary conditions]
\label{cor_H1_B}
For all $p \geq 0$, $\sigma_p \in \CP_{p+1}(\CTa) \cap H^1(\oma)$, and all $\bchi_p \in \ND_p(\CTa)$, we have
\begin{equation*}
\min_{\substack{
v_p \in \CP_{p+1}(\CTa) \cap H^1(\oma)
\\
v_p = \sigma_p \text{ on } \Gamma
}}
\|\grad v_p - \bchi_p\|_{\oma}
\leq
C(\kappa_{\CTa})
\min_{\substack{
v \in H^1(\oma)
\\
v = \sigma_p \text{ on } \Gamma
}}
\|\grad v - \bchi_p\|_{\oma}.
\end{equation*}
\end{corollary}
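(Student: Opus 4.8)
The plan is to derive Corollary~\ref{cor_H1_B} from Corollary~\ref{cor_H1} by the same shift-by-lifting device used in the preceding proofs, reducing the inhomogeneous boundary condition to a homogeneous one via the piecewise polynomial datum $\sigma_p$. First I would suppose the setting of Corollary~\ref{cor_H1_B}, and note that the converse direction (which gives the $\geq$ inequality, trivially with constant one by the remark on converse inequalities) is analogous. I would then write $v = v^0 + \sigma_p$ with $v^0 \in H^1_{0,\Gamma}(\oma)$, and $v_p = v^0_p + \sigma_p$ with $v^0_p \in \CP_{p+1}(\CTa) \cap H^1_{0,\Gamma}(\oma)$; this is a genuine bijection between the feasible sets of the inhomogeneous problems and those of the corresponding homogeneous problems, because $\sigma_p$ itself is admissible ($\sigma_p \in \CP_{p+1}(\CTa) \cap H^1(\oma)$ and $\sigma_p = \sigma_p$ on $\Gamma$), and adding it back and forth preserves both the $\CP_{p+1}(\CTa)$ piecewise-polynomial structure and the $H^1(\oma)$ conformity.

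The key observation making the shift effective is that the modified datum stays eligible for Corollary~\ref{cor_H1}: since $\sigma_p \in \CP_{p+1}(\CTa) \cap H^1(\oma)$, its broken gradient $\grad \sigma_p$ actually belongs to $\ND_p(\CTa) \cap \BH_{0,\Gamma}(\ccurl,\oma)$ — indeed, on each tetrahedron $\grad$ of a degree-$(p{+}1)$ polynomial lies in $\BCP_p(K) \subset \ND_p(K)$, the $H^1(\oma)$-conformity of $\sigma_p$ forces tangential continuity of $\grad \sigma_p$ across faces so that $\grad \sigma_p \in \BH(\ccurl,\oma)$, and $\curl(\grad \sigma_p) = \bzero$ elementwise. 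Hence, setting $\tilde \bchi_p \eq \bchi_p - \grad \sigma_p$, we have $\tilde \bchi_p \in \ND_p(\CTa)$, which is exactly the admissible class for Corollary~\ref{cor_H1}. Crucially, for every $v$ (resp.\ $v_p$) in the inhomogeneous feasible set and its shifted counterpart $v^0$ (resp.\ $v^0_p$), one has $\grad v - \bchi_p = \grad v^0 - \tilde \bchi_p$ (resp.\ $\grad v_p - \bchi_p = \grad v^0_p - \tilde \bchi_p$), so the two minimization values on each side coincide with the corresponding values in Corollary~\ref{cor_H1} applied to $\tilde \bchi_p$.

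Putting these together, the left-hand side of Corollary~\ref{cor_H1_B} equals $\min_{v^0_p \in \CP_{p+1}(\CTa) \cap H^1_{0,\Gamma}(\oma)} \|\grad v^0_p - \tilde \bchi_p\|_{\oma}$, which by Corollary~\ref{cor_H1} is bounded by $C(\kappa_{\CTa}) \min_{v^0 \in H^1_{0,\Gamma}(\oma)} \|\grad v^0 - \tilde \bchi_p\|_{\oma}$, and this last quantity equals the right-hand side of Corollary~\ref{cor_H1_B}; this completes the proof. There is no real obstacle here: the only point requiring a line of justification is the claim that $\grad \sigma_p \in \ND_p(\CTa) \cap \BH_{0,\Gamma}(\ccurl,\oma)$ (tangential-trace continuity on interior faces from $H^1(\oma)$-conformity, and vanishing tangential trace on $\Gamma$ — this last is automatic from the definition $\bw \times \bn_{\oma} = \bsig_p \times \bn_{\oma}$ on $\Gamma$ meaning $\bw - \bsig_p \in \BH_{0,\Gamma}(\ccurl,\oma)$, here with $\bw = \bsig_p = \grad \sigma_p$, trivially), together with the bijectivity of the shift between feasible sets, which is the standard affine-subspace argument already invoked in Corollaries~\ref{cor_Hd_cons_B}, \ref{cor_Hc_B}, and~\ref{cor_Hc_cons_B}. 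Accordingly I would keep the written proof to a few sentences, mirroring the structure of those earlier proofs.
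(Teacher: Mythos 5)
Your proof is correct and follows exactly the paper's argument: shift the feasible sets by $\sigma_p$ and apply Corollary~\ref{cor_H1} to the datum $\tilde\bchi_p \eq \bchi_p - \grad\sigma_p \in \ND_p(\CTa)$. One small caveat: your parenthetical claim that $\grad\sigma_p \in \BH_{0,\Gamma}(\ccurl,\oma)$ is false in general (its tangential trace on $\Gamma$ need not vanish, and the identity $\bw-\bsig_p=\bzero$ you invoke says nothing about it), but this is harmless since Corollary~\ref{cor_H1} only requires the broken-space membership $\tilde\bchi_p \in \ND_p(\CTa)$, which you correctly establish.
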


\begin{proof}
The proof passes through equivalence with Corollary~\ref{cor_H1}. It is similar as above but again easier, since there is no differential operator constraint. Every $v_p \in \CP_{p+1}(\CTa) \cap H^1(\oma)$ with $v_p = \sigma_p$ on $\Gamma$ can be written as $v_p = v^0_p + \sigma_p$, where $v^0_p \in \CP_{p+1}(\CTa) \cap H^1_{0,\Gamma}(\oma)$, and similarly for $v = v^0 + \sigma_p$ with $v^0 \in H^1_{0,\Gamma}(\oma)$. Then, $\tilde \bchi_p \eq \bchi_p - \grad \sigma_p$ lies in $\ND_p(\CTa)$ and forms an eligible datum for Corollary~\ref{cor_H1}, which leads to the same minimization values.
\end{proof}

The remainder of this manuscript is dedicated to establishing Theorem~\ref{thm_Hc_cons}.

\section{Detailed notation and preliminary results for the proofs}
\label{section_preliminary}

\subsection{Tangential traces} \label{sec_tang_traces}

Consider a tetrahedron $K$ and $\CF \subset \CF_K$, a (sub)set of its faces.
The definition of tangential traces of $\BH(\ccurl,K)$ functions on the faces
$F \in \CF$ is a subtle matter. As we only work with piecewise polynomial traces,
one way is to proceed with the liftings as in Corollaries~\ref{cor_Hc_B} and~\ref{cor_Hc_cons_B}.
We rather proceed here following~\cite{chaumontfrelet_ern_vohralik_2020a, chaumontfrelet_ern_vohralik_2021a},
introducing the more general concept of a weak tangential trace, solely working with the boundary
data (called $\br_p$ here) and not directly their N\'ed\'elec liftings ($\bsig_p$ in
Corollaries~\ref{cor_Hc_B} and~\ref{cor_Hc_cons_B}).

If $\bv \in \BH^1(K)$ and $F \in \CF_K$, we denote by
\begin{equation} \label{eq_tang_trace}
\pi_F^{\btau}(\bv)
\eq
\left (\bv - (\bv {\cdot} \bn_F) \bn_F\right )|_F \in \BL^2(F)
\end{equation}
its ``usual'' tangential trace on the face $F$
(the orientation of $\bn_F$ is not important here).
We then define surface N\'ed\'elec spaces on faces as traces of volume N\'ed\'elec polynomials,
setting
\begin{equation*}
\ND_p(F) \eq \left \{
\pi^{\btau}_F(\bv)
\; | \;
\bv \in \ND_p(K)
\right \},
\end{equation*}
and, if $\bw \in \ND_p(F)$,
\begin{equation*}
\scurl_F \bw \eq (\curl \bv)|_F {\cdot} \bn_F,
\end{equation*}
where $\bv \in \ND_p(K)$ is such that $\bw = \pi_F^{\btau} (\bv)$
(the orientation of $\bn_F$ counts in the definition of $\scurl_F$).
One easily checks that for any face $F$ (which is geometrically a triangle),
these definitions are independent of the choice of the tetrahedron $K$ such
that $F \in \CF_K$.

For a collection of faces $\CF \subset \CF_K$, we introduce
\begin{equation*}
\ND_p(\CF) \eq \prod_{F \in \CF} \ND_p(F)
\end{equation*}
and
\begin{equation}
\label{eq_ttrace_Gamma}
\ND_p(\Gamma_{\CF}) \eq \left \{
\bw \in \ND_p(\CF) \; | \;
\exists \bv \in \ND_p(K); \;
\bw|_F = \pi^{\btau}_F (\bv)
\;
\forall F \in \CF
\right \}.
\end{equation}
Notice that there is an induced tangential trace compatibility condition on each edge
shared by faces of $\CF$ in the definition of $\ND_p(\Gamma_{\CF})$.

We then define a weak notion of tangential trace using integration by parts.
Specifically, if $\bv \in \BH(\ccurl,K)$ and $\br_p \in \ND_p(\Gamma_{\CF})$
for some $p \geq 0$, the statement ``$\bv|_{\CF}^{\btau} = \br_p$'' means that
\begin{equation}
\label{eq_weak_trace}
(\curl \bv,\bphi)_K - (\bv,\curl \bphi)_K
=
\sum_{F \in \CF} (\br_p,\bphi \times \bn_K)_F
\quad
\forall \bphi \in \BH^1_{\btau,\CF^{\rm c}}(K),
\end{equation}
where
\begin{equation*}
\BH^1_{\btau,\CF^{\rm c}}(K)
\eq
\left \{
\bw \in \BH^1(K)
\; | \;
\pi_F^{\btau}(\bw) = \bzero
\quad
\forall F \in \CF^{\rm c} \eq \CF_K \setminus \CF
\right \}.
\end{equation*}
Notice that when $\bv \in \BH^1(K)$, $\bv|_{\CF}^{\btau} = \br_p$ if and only if
$\pi_F^{\btau}(\bv) = \br_p|_F$ for all $F \in \CF$.

\begin{remark}[Compatibility of the weak definitions of tangential traces]
Let $\Gamma_{\CF}$ be the portion of the boundary of $K$ corresponding
to the faces in $\CF$ and $\Gamma_{\CF}^{\rm c}$ the corresponding complement
(both open). We note that when $\br_p = \bzero$, the subspace of $\bv \in \BH(\ccurl,K)$
such that $\bv|_{\CF}^{\btau} = \bzero$ is identical with $\BH_{0,\Gamma_{\CF}}(\ccurl,K)$
from Section~\ref{sec_fct_sp}, where test functions $\bw \in \BH^1(K)$ such that
$\bw = \bzero$ on $\Gamma_{\CF}^{\rm c}$ are used. Using test functions $\bw \in \BH^1(K)$
such that only $\pi_F^{\btau}(\bw) = \bzero$ on $\Gamma_{\CF}^{\rm c}$ will be exploited
below for the Piola transforms.
\end{remark}

\subsection{Piola mappings}

Consider two tetrahedra $\Kin,\Kout \in \CTa$ and an invertible affine transformation
$\psi: \Kin \to \Kout$. Such a transformation can be uniquely identified by specifying
which vertex of $\Kin$ is mapped to which vertex of $\Kout$. We denote by $\JAC$ the
(constant) Jacobian matrix of $\psi$ and we let $\varepsilon \eq \sign (\det \JAC)$.

We associate with $\psi$ two ``Piola'' mappings for vector-valued functions
$\bv: \Kin \to \mathbb R^3$ defined by
\begin{equation} \label{eq_Piola}
\psi^{\rm c}(\bv) \eq \JAC^{-T} \left (\bv \circ \psi^{-1}\right ),
\qquad
\psi^{\rm d}(\bv) \eq (\det \JAC)^{-1} \JAC \left (\bv \circ \psi^{-1}\right ).
\end{equation}
These mappings commute with the curl operator in the sense that
\begin{equation}
\label{eq_piola_commute}
\curl \left (\psi^{\rm c}(\bv)\right ) = \psi^{\rm d}\left (\curl \bv\right )
\end{equation}
whenever $\bv \in \BH(\ccurl,\Kin)$. In addition,
if $\bv_{\rm in} \in \BH(\ccurl,\Kin)$ and $\bw_{\rm out} \in \BH(\ccurl,\Kout)$
we have
\begin{equation}
\label{eq_piola_v_curlw}
(\psi^{\rm c}(\bv_{\rm in}),\curl \bw_{\rm out})_{\Kout}
=
\varepsilon (\bv_{\rm in},\curl \left ((\psi^{\rm c})^{-1} (\bw_{\rm out})\right ))_{\Kin}.
\end{equation}
Finally, we use the fact that the Piola mappings are stable in the sense that
\begin{equation}
\label{eq_piola_stable}
\|\psi^{\rm c}(\bv)\|_{\Kout}
\leq
C(\kappa_{\CTa})\|\bv\|_{\Kin}
\quad
\forall \bv \in \BL^2(\Kin).
\end{equation}
We refer the reader to~\cite[Section~9]{ern_guermond_2021a} for an in-depth presentation
of Piola mappings and proofs of the properties stated above.

\subsection{Stability in one tetrahedron}

We close this section with a simple extension of a result
from~\cite[Theorem~2]{chaumontfrelet_ern_vohralik_2020a},
corresponding to Theorem~\ref{thm_Hc_cons} (or more precisely Corollary~\ref{cor_Hc_cons_B})
where the vertex patch $\CTa$ is replaced by a single tetrahedron $K$.

\begin{definition}[Compatible data in a tetrahedron]
\label{def_compatible_data}
Let $K$ be a tetrahedron. Consider a (sub)set $\CF \subset \CF_K$ of the faces of $K$.
We say that $\bj_p \in \RT_p(K)$ and $\br_p \in \ND_p(\CF)$ are compatible data if
\begin{subequations}
\begin{align}
\label{eq_compatible_data_div}
\div \bj_p &= 0,
\\
\label{eq_compatible_data_trace}
\br_p &\in \ND_p(\Gamma_{\CF}),
\\
\label{eq_compatible_data_curl}
\bj_p {\cdot} \bn_F &= \scurl_F (\br_p|_F) \quad \forall F \in \CF.
\end{align}
\end{subequations}
\end{definition}

\begin{lemma}[Stable minimization in a tetrahedron]
\label{lemma_tetrahedron}
Consider a tetrahedron $K$ and a (sub)set $\CF$ of its faces.
For all $p \geq 0$, for all compatible data $\bj_p \in \RT_p(K)$
and $\br_p \in \ND_p(\CF)$ as per Definition~\ref{def_compatible_data},
and for all $\bchi_p \in \ND_p(K)$, we have
\begin{equation*}
\min_{\substack{
\bv_p \in \ND_p(K)
\\
\curl \bv_p = \bj_p
\\
\bv_p|^{\btau}_{\CF} = \br_p
}}
\|\bv_p - \bchi_p\|_K
\leq
C(\kappa_K)
\min_{\substack{
\bv \in \BH(\ccurl,K)
\\
\curl \bv = \bj_p
\\
\bv|^{\btau}_{\CF} = \br_p
}}
\|\bv - \bchi_p\|_K.
\end{equation*}
\end{lemma}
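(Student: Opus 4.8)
The plan is to reduce this statement to the already-known single-tetrahedron result \cite[Theorem~2]{chaumontfrelet_ern_vohralik_2020a}, which (as the text indicates just above the lemma) is precisely the inequality we want but without the $\bchi_p$ shift --- that is, it controls the discrete minimal-norm element $\bv_p \in \ND_p(K)$ with $\curl \bv_p = \bj_p$ and $\bv_p|^{\btau}_{\CF} = \br_p$ by the continuous one. First I would handle the trivial case where the continuous feasible set is empty: by the compatibility conditions in Definition~\ref{def_compatible_data} together with the commuting-diagram / exact-sequence properties on a single tetrahedron, the conditions \eqref{eq_compatible_data_div}--\eqref{eq_compatible_data_curl} are exactly what is needed for both feasible sets to be nonempty, so this case does not arise and both minima are attained.

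The main step is the shift argument. Let $\bv^\star \in \BH(\ccurl,K)$ be a minimizer of the right-hand side, so $\curl \bv^\star = \bj_p$, $\bv^\star|^{\btau}_{\CF} = \br_p$, and $\|\bv^\star - \bchi_p\|_K$ equals the right-hand minimum. Set $\bv \eq \bv^\star - \bchi_p$. Then $\curl \bv = \bj_p - \curl \bchi_p \in \RT_p(K)$, and the tangential trace of $\bv$ on $\CF$ is $\br_p - \bchi_p|^{\btau}_{\CF} \in \ND_p(\CF)$ (both pieces of data are now genuine polynomial data on $K$ and on $\CF$). One checks that $\tilde\bj_p \eq \bj_p - \curl \bchi_p$ and $\tilde\br_p \eq \br_p - \pi^{\btau}(\bchi_p)$ are again compatible in the sense of Definition~\ref{def_compatible_data}: $\div \tilde\bj_p = 0$ since $\div\curl\bchi_p = 0$; $\tilde\br_p \in \ND_p(\Gamma_{\CF})$ since $\bchi_p \in \ND_p(K)$ provides the required global lifting; and $\tilde\bj_p \cdot \bn_F = \scurl_F(\tilde\br_p|_F)$ on each $F\in\CF$ by linearity of \eqref{eq_compatible_data_curl} applied to $\bchi_p$ itself. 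Hence \cite[Theorem~2]{chaumontfrelet_ern_vohralik_2020a} applies to the data $(\tilde\bj_p,\tilde\br_p)$ and yields a discrete $\bv_p^0 \in \ND_p(K)$ with $\curl \bv_p^0 = \tilde\bj_p$, $(\bv_p^0)|^{\btau}_{\CF} = \tilde\br_p$, and
\begin{equation*}
\|\bv_p^0\|_K \leq C(\kappa_K)\, \min_{\substack{\bv \in \BH(\ccurl,K)\\ \curl \bv = \tilde\bj_p\\ \bv|^{\btau}_{\CF} = \tilde\br_p}} \|\bv\|_K \leq C(\kappa_K)\,\|\bv\|_K = C(\kappa_K)\,\|\bv^\star - \bchi_p\|_K,
\end{equation*}
where the middle inequality uses that $\bv = \bv^\star - \bchi_p$ is feasible for the continuous problem with data $(\tilde\bj_p,\tilde\br_p)$.

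To finish, set $\bv_p \eq \bv_p^0 + \bchi_p \in \ND_p(K)$. Then $\curl \bv_p = \tilde\bj_p + \curl\bchi_p = \bj_p$ and $\bv_p|^{\btau}_{\CF} = \tilde\br_p + \pi^{\btau}(\bchi_p) = \br_p$, so $\bv_p$ is feasible for the discrete problem on the left-hand side, and $\|\bv_p - \bchi_p\|_K = \|\bv_p^0\|_K \leq C(\kappa_K)\,\|\bv^\star - \bchi_p\|_K$, which is exactly the asserted bound. I do not expect a genuine obstacle here: the content is entirely in the cited single-tetrahedron theorem, and the only thing to verify carefully is that the shifted data $(\tilde\bj_p,\tilde\br_p)$ remain compatible in the sense of Definition~\ref{def_compatible_data} and that $\bchi_p$ being a bona fide element of $\ND_p(K)$ makes the shift exact at both the continuous and discrete levels (in particular that the trace identity $\pi^{\btau}(\bchi_p)$ is what appears in \eqref{eq_weak_trace} when $\bchi_p \in \BH^1(K)$, which is noted right after \eqref{eq_weak_trace}). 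The mild subtlety worth a sentence in the write-up is the edge-compatibility condition implicit in $\ND_p(\Gamma_{\CF})$: it is preserved under the shift precisely because $\bchi_p$ has a single-valued tangential trace along every edge.
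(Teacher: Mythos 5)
Your proposal is correct and follows essentially the same route as the paper: a shift by $\bchi_p$ to the data $\tilde\bj_p = \bj_p - \curl\bchi_p$ and $\tilde\br_p = \br_p - \pi^{\btau}(\bchi_p)$, verification that these remain compatible per Definition~\ref{def_compatible_data}, and an application of \cite[Theorem~2]{chaumontfrelet_ern_vohralik_2020a} followed by shifting back. The only cosmetic difference is that you construct a feasible discrete element and bound it, whereas the paper identifies the shifted minimizers directly; the content is identical.
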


\begin{proof}
The proof proceeds by a shift by $\bchi_p$, similarly to that of Corollary~\ref{cor_Hc_B}.
Let us introduce $\tbj_p \eq \bj_p-\curl \bchi_p$ and
$\tbr_p|_F \eq \br_p|_F - \pi_F^{\btau}(\bchi_p)$ for all $F \in \CF$.
The new data $\tbj_p$ and $\tbr_p$ are compatible as per
Definition~\ref{def_compatible_data}, since $\bchi_p \in \ND_p(K)$. We now have
from~\cite[Theorem 2]{chaumontfrelet_ern_vohralik_2020a} (which corresponds to
Lemma~\ref{lemma_tetrahedron} when $\bchi_p = \bzero$) that
\begin{equation*}
\min_{\substack{
\tbv_p \in \ND_p(K)
\\
\curl \tbv_p = \tbj_p
\\
\tbv_p|^{\btau}_{\CF} = \tbr_p
}}
\|\tbv_p\|_K
\leq
C(\kappa_K)
\min_{\substack{
\tbv \in \BH(\ccurl,K)
\\
\curl \tbv = \tbj_p
\\
\tbv|^{\btau}_{\CF} = \tbr_p
}}
\|\tbv\|_K.
\end{equation*}
Denote respectively by $\bv_p^\star,\tbv_p^\star \in \ND_p(K)$ and
$\bv^\star,\tbv^\star \in \BH(\ccurl,K)$ the (unique) minimizers of the above left-
and right-hand sides. Then the respective inequalities write as
$\|\bv^\star_p - \bchi_p\|_K \leq C(\kappa_K) \|\bv^\star-\bchi_p\|_K$
and $\|\tbv^\star_p\|_K \leq C(\kappa_K)\|\tbv^\star\|_K$ and a shift by $\bchi_p$
shows that actually $\tbv_p^\star = \bv_p^\star - \bchi_p$ and $\tbv^\star = \bv^\star-\bchi_p$.
\end{proof}

\section{Proof of Theorem~\ref{thm_Hc_cons} for interior patches}
\label{section_interior_patches}

We first consider interior patches, i.e.,
the case where $\oma$ contains an open ball around $\ba$ (so that
$\ba \notin \partial \oma$), where $\Gamma = \partial \oma$ and $\Ga = \emptyset$.

We follow the approach introduced in~\cite{braess_pillwein_schoberl_2009a} and
extended in~\cite{ern_vohralik_2020a} and~\cite{chaumontfrelet_ern_vohralik_2021a},
so that our proof relies on an explicit construction of a discrete element
$\bxi_p \in \ND_p(\CTa) \cap \BH_{0,\Gamma}(\ccurl,\oma)$ satisfying $\curl \bxi_p = \bj_p$ and
\begin{equation*}
\|\bxi_p-\bchi_p\|_{\oma} \leq C(\kappa_{\CTa})
\min_{\substack{
\bv \in \BH_{0,\Gamma}(\ccurl,\oma)
\\
\curl \bv = \bj_p
}}
\|\bv-\bchi_p\|_{\oma}.
\end{equation*}
To construct this element, we pass through the patch one tetrahedron at a time,
following a suitable enumeration $K_1,K_2,\dots,K_{|\CTa|}$. At each step
$1 \leq j \leq |\CTa|$, $\bxi_p|_{K_j}$ is defined as the minimizer of an
element-wise constrained minimization problem like in Lemma~\ref{lemma_tetrahedron},
with carefully chosen boundary data.

For this argument to function, we need a suitable enumeration of the tetrahedra of the
patch, to pass in the right order. This is elaborated in Section~\ref{section_shelling}.
Then, we need to ensure that data we prescribe for the minimization problem in each
element $K_j$ are compatible as per Definition~\ref{def_compatible_data}. It turns
out that two arduous cases appear. First, the argument becomes subtle when
$K_j$ is the last element closing a loop around an edge $e$ of the patch. Similarly,
the last element $K_{|\CTa|}$ of the patch must be carefully addressed.
Section~\ref{section_two_color} and~\ref{section_three_color} provide intermediate
results to treat these two cases.

\subsection{Enumeration of the elements in the patch}
\label{section_shelling}

For $K \in \CTa$, we denote by $\CF_K^{\rm int}$ the set of faces of $K$ sharing the vertex $\ba$.
If $e$ is an edge of the patch having $\ba$ as a vertex, we denote by $\CTe \subset \CTa$ the
edge patch of elements sharing the edge $e$ and by $\ome$ the associated open subdomain.

We call an enumeration of the patch $\CTa$ an ordering of its elements $K_1,\dots,K_{|\CTa|}$.
For such an enumeration, for $1 \leq j \leq |\CTa|$, we denote by
$\CF_j^{\sharp} \subset \CF_{K_j}^{\rm int}$ the set of faces of $K_j$ shared
with an already enumerated element $K_i$ with $i < j$, and we set
$\CF_j^\flat \eq \CF_{K_j}^{\rm int} \setminus \CF_j^\sharp$.
The result in~\cite[Lemma~B.1]{ern_vohralik_2020a} provides us with a suitable
enumeration featuring the key properties listed below and illustrated in Figure~\ref{fig_ex_numbering}.

\begin{figure}
\centerline{\includegraphics[width=0.4\linewidth]{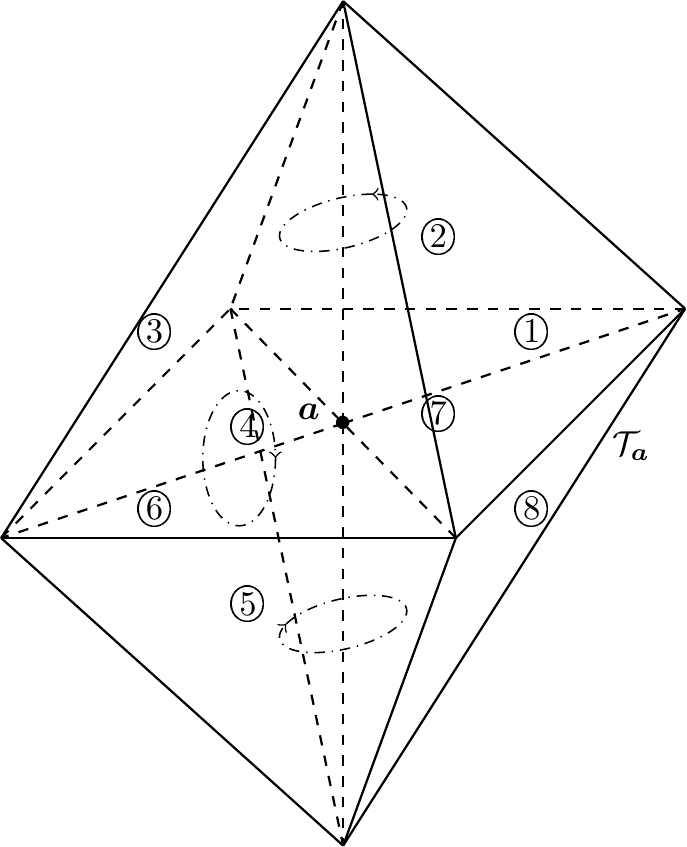} \qquad \includegraphics[width=0.4\linewidth]{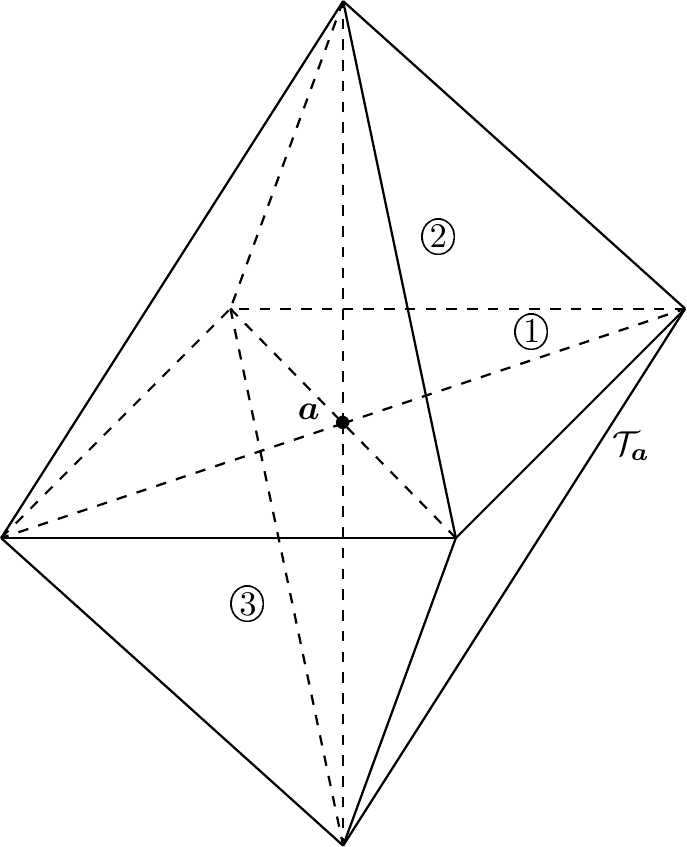}}
\caption{Patch enumeration of Proposition~\ref{prop_enumeration}, where ``loops'' around edges give property~\ref{it_enum_i} and where all elements except $K_1$ and $K_8$ have both already enumerated and not yet enumerated neighbors, i.e., property~\ref{it_enum_ii} (left). Invalid enumeration (right).}
\label{fig_ex_numbering}
\end{figure}

\begin{proposition}[Patch enumeration]
\label{prop_enumeration}
There exists an enumeration $\{K_1,\dots,K_{|\CTa|}\}$ of the vertex patch $\CTa$ such that:
\begin{enumerate}[label=(\roman*)]
\item For $1 < j \leq |\CTa|$, if there are at least two faces in $\CF_j^\sharp$ intersecting
in an edge, then all the elements sharing this edge come sooner in the enumeration, i.e.,
if $|\CF_j^\sharp| \geq 2$ with $F^1,F^2 \in \CF_j^\sharp$, then letting $e \eq F^1 \cap F^2$,
$K_i \in \CTe \setminus \{K_j\}$ implies that $i < j$. \label{it_enum_i}
\item For all $1 < j < |\CTa|$, there are one or two neighbors of $K_j$ which have been
already enumerated and correspondingly two or one neighbors of $K_j$ which have not been
enumerated yet, i.e., $|\CF_j^\sharp| \in \{1,2\}$ (so that
$|\CF_j^\flat| = 3 - |\CF_j^\sharp| \in \{1,2\}$ as well) for all but the first and the
last element. In particular, $\CF_j^\sharp$ is empty if and only if $j = 1$ and
$\CF_j^\sharp$ contains all the interior faces of $K_j$
(so that $\CF_j^\flat$ is empty) if and only if $j = |\CTa|$. \label{it_enum_ii}
\end{enumerate}
\end{proposition}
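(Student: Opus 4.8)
The plan is to reduce the statement to a purely combinatorial shelling-type property of the \emph{link} of $\ba$ in $\CTa$, and then to produce the required enumeration by a greedy ``region-growing'' procedure, keeping a suitable simplicity invariant on the built region.

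\textbf{Reduction to the link.} Intersecting the patch with a small sphere around $\ba$ (equivalently, taking the link of $\ba$) produces a finite two-dimensional simplicial complex $\mathcal L$ with the dictionary: tetrahedra $K\in\CTa$ correspond to triangles of $\mathcal L$; the three faces of $K$ through $\ba$ (the members of $\CF_K^{\rm int}$) correspond to the three edges of the associated triangle; the edges of the patch emanating from $\ba$ correspond to the vertices of $\mathcal L$; two tetrahedra are neighbors iff the two triangles share an edge; and for a patch edge $e$ through $\ba$, the edge patch $\CTe$ corresponds to the star $\mathrm{st}_{\mathcal L}(v)$ of the associated vertex $v$. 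Since $\overline{\oma}$ is homotopic to a ball and $\partial\oma$ is Lipschitz, $\mathcal L$ is a triangulated $2$-sphere for an interior patch and a triangulated closed $2$-disc with a single boundary circuit for a boundary patch, and face-connectedness of $\CTa$ translates into edge-connectedness of $\mathcal L$. An enumeration $K_1,\dots,K_{|\CTa|}$ thus becomes an ordering $T_1,\dots,T_n$ of the triangles of $\mathcal L$; writing $R_j\eq T_1\cup\dots\cup T_j$, the set $\CF_j^\sharp$ corresponds exactly to the set of edges of $T_j$ that lie in $R_{j-1}$. Because any two faces of a tetrahedron meet in an edge, it suffices to produce an ordering such that: (a) $R_j$ is a triangulated $2$-disc for each $j<n$ (for an interior patch this forces $T_n$ to be attached along all three of its edges; for a boundary patch along one or two edges, the remaining one(s) lying on $\partial\mathcal L$); and (b) whenever $T_j$ meets $R_{j-1}$ along (at least) two edges, meeting at a vertex $v$, then all triangles of $\mathrm{st}_{\mathcal L}(v)$ carry indices $\le j$. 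Properties~(a)--(b) are equivalent to \ref{it_enum_ii} and \ref{it_enum_i}, respectively.

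\textbf{Region-growing construction.} I would build $T_1,\dots,T_n$ one at a time, maintaining the invariant
\[
(\star)\qquad \text{for every vertex } v \text{ of } \mathcal L, \quad R_j\cap \mathrm{st}_{\mathcal L}(v) \text{ is a (possibly empty) contiguous sub-fan of the fan at } v,
\]
equivalently that $R_j$ is a triangulated $2$-disc whose boundary circuit is \emph{simple} (meeting $\partial\mathcal L$, if present, in a single arc; for a boundary patch one starts from a triangle touching $\partial\mathcal L$). Start with any $T_1$, for which $(\star)$ is trivial. At a generic step with $R_j\subsetneq\mathcal L$: if some unused triangle is adjacent to $R_j$ along at least two edges, append it; otherwise append a triangle adjacent to $R_j$ along exactly one edge whose opposite vertex is \emph{not} a vertex of $R_j$. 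One checks that a two-edge attachment always preserves $(\star)$: at the common vertex $v$ the sub-fan $R_j\cap\mathrm{st}_{\mathcal L}(v)$ contains both fan-neighbors of the new triangle but not the triangle itself, which for the cyclic fan of an interior vertex forces it to equal $\mathrm{st}_{\mathcal L}(v)$ minus that triangle (and for a boundary vertex this configuration cannot occur at all); hence $(\star)$ is restored and, moreover, property~(b) holds. A one-edge attachment to a vertex outside $R_j$ likewise only extends each affected sub-fan at one of its ends or creates a new singleton, preserving $(\star)$.

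\textbf{Availability of a move, termination, and the main obstacle.} By $(\star)$, the complement $D\eq\overline{\mathcal L\setminus R_j}$ is again a triangulated $2$-disc whose boundary is the simple circuit $\partial R_j$ (here it is essential that $\mathcal L$ is a genuine sphere or disc, i.e.\ that $\overline{\oma}$ is homotopic to a ball). Invoking the elementary fact that a triangulated $2$-disc with at least two triangles admits a boundary triangle whose removal leaves a triangulated $2$-disc (the last triangle of a shelling of $D$; ears handle the two-edge case), and the observation that such a triangle attached along a \emph{single} edge necessarily has its free vertex in the interior of $D$ — otherwise its removal would pinch $\partial D$ — one gets that an admissible move is always available while $R_j\ne\mathcal L$. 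Since $|R_j|$ increases strictly, the procedure terminates with a full ordering satisfying (a) and (b). Translating back through the dictionary yields the enumeration of $\CTa$ with properties~\ref{it_enum_i} and~\ref{it_enum_ii}. I expect the main obstacle to be precisely this middle step: establishing that a one-edge move preserving $(\star)$ exists whenever no multi-edge move does — equivalently, the combinatorial lemma that a singly-attached removable boundary triangle of a triangulated $2$-disc has its free vertex in the interior — since this is where the topology of the patch is genuinely used, and also the point where the argument for boundary patches needs the extra bookkeeping that $\partial R_j\cap\partial\mathcal L$ stays connected.
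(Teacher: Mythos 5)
The paper does not actually prove Proposition~\ref{prop_enumeration}: it is imported verbatim from \cite[Lemma~B.1]{ern_vohralik_2020a} (hence the label ``Proposition''), so the comparison is against that external argument rather than anything in this manuscript. Your reduction to the link of $\ba$ and the greedy region-growing with the contiguous-sub-fan invariant $(\star)$ is a sound and essentially self-contained way to get the result for \emph{interior} patches, which is the only case in which the present paper invokes the proposition (it is stated inside Section~\ref{section_interior_patches}, and boundary patches are handled in Section~\ref{section_boundary_patches} by a completely different route). Your key observation is correct and is exactly the right one: in the cyclic fan at an interior link vertex $v$, a contiguous proper sub-arc containing both fan-neighbours of a triangle $T$ but not $T$ itself must equal the whole star minus $T$, which is precisely property~\ref{it_enum_i}. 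Two caveats on the interior case. First, your parenthetical ``equivalently'' is false: $(\star)$ at every vertex does not imply that $R_j$ is a disc (an equatorial band in a triangulated sphere satisfies $(\star)$ but is an annulus). This is harmless because the two admissible moves each preserve disc-ness directly, but disc-ness must be carried as a separate invariant, since your availability argument uses it to conclude that the complement $D$ is a disc. Second, the load-bearing combinatorial fact --- every triangulated $2$-disc with at least two triangles has either an ear or a boundary triangle with exactly one boundary edge and interior apex --- is invoked rather than proved; it is true (e.g., by an innermost-chord argument: a chord cutting off a minimal sub-disc forces an ear or an interior apex inside that sub-disc), and your ``pinching'' justification for why a singly-attached removable triangle has interior apex is correct, but this is where the actual work sits.

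There is a genuine inconsistency in your treatment of boundary patches. Property~\ref{it_enum_ii} requires $\CF_{|\CTa|}^\flat=\emptyset$, i.e., \emph{all three} faces of the last tetrahedron through $\ba$ must be shared with previously enumerated elements; in link terms the last triangle must be glued along all three of its edges. Your item~(a) instead allows the last triangle of a boundary patch to be attached along one or two edges with the remaining edge(s) on $\partial\mathcal L$, which puts those faces into $\CF_{|\CTa|}^\flat$ and violates \ref{it_enum_ii} as written. Moreover, when the link is a disc, gluing the final triangle along all three edges forces $R_{n-1}$ to be an annulus, so the ``$R_j$ is a disc'' invariant cannot survive to the last step, and indeed no valid enumeration exists for some boundary patches (e.g., a fan of tetrahedra around a boundary edge). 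This is not really your fault --- the footnote to Section~\ref{section_results} acknowledges that such boundary configurations were excluded in \cite{ern_vohralik_2020a} and are handled here by the extension/symmetrization machinery of Section~\ref{section_boundary_patches} instead --- but your proof should either restrict explicitly to interior patches or state the additional hypotheses under which the boundary case goes through.
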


\subsection{Two-color refinement of edge patches}
\label{section_two_color}

This section recalls the following useful result to deal with the last
element of an edge patch of~\cite[Lemma~B.2]{ern_vohralik_2020a},
illustrated in Figure~\ref{fig_two_three_color}, left.

\begin{figure}
\centerline{\includegraphics[width=0.265\linewidth]{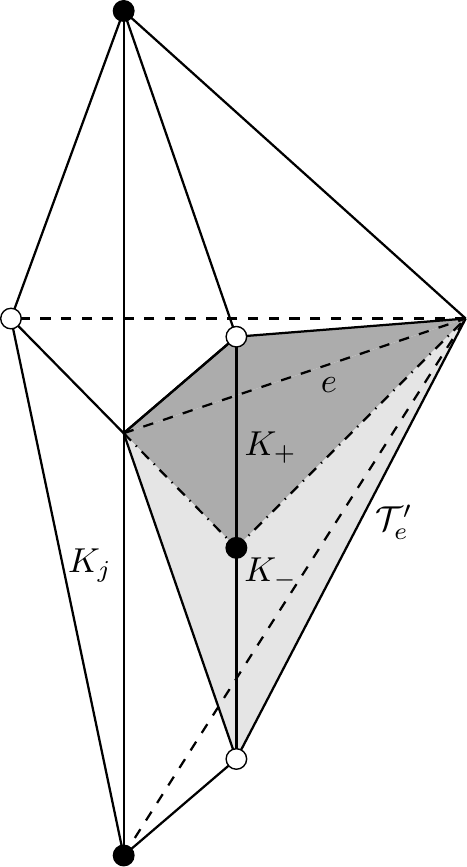} \qquad \includegraphics[width=0.4\linewidth]{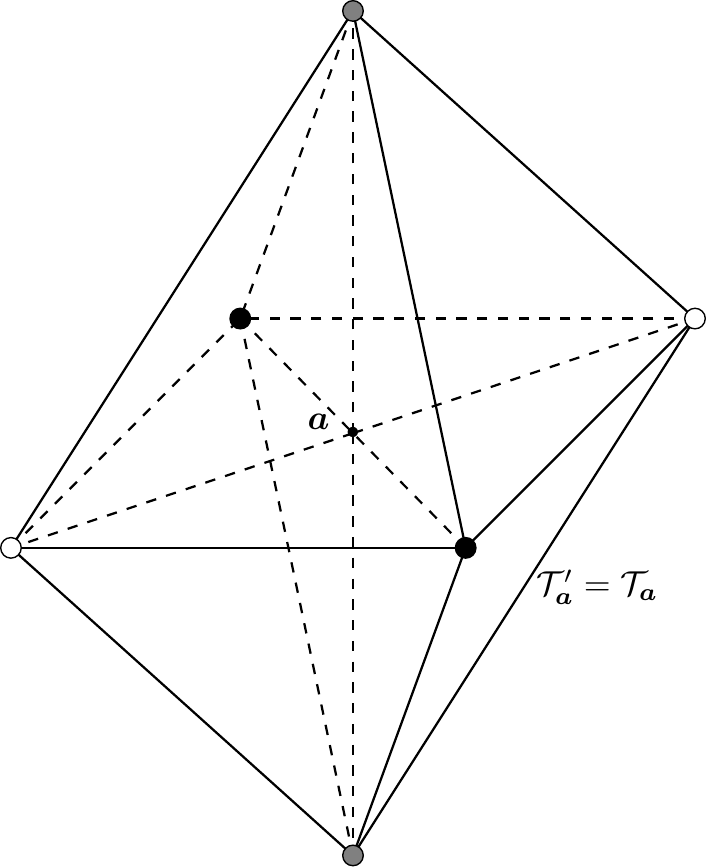}}
\caption{Two-color refinement (black and white) $\CTe'$ around an edge $e$ of Proposition~\ref{prop_two_color} (one of the tetrahedra in $\CTe$, different from $K_j$, is cut into $K_{+}$ and $K_{-}$) (left). Three-color refinement (black, grey, and white) around a vertex $\ba$ of Proposition~\ref{prop_three_color} (trivial situation where $\CTa'$ can be taken as $\CTa$) (right).}
\label{fig_two_three_color}
\end{figure}

\begin{proposition}[Two-color refinement around edges]
\label{prop_two_color}
Fix a tetrahedron $K_j \in \CTa$ and an edge $e$ of $K_j$ having $\ba$ as
one endpoint.
Then there exists a conforming refinement $\CTe'$ of $\CTe$ composed
of tetrahedra such that
\begin{itemize}
\item[(i)]
$\CTe'$ contains $K_j$.
\item[(ii)]
All the tetrahedra in $\CTe'$ have $e$ as an edge, and their two other vertices lie on $\partial \oma$.
\item[(iii)] There holds $\kappa_{\CTe'} \leq 2 \kappa_{\CTe}$.
\item[(iv)]
Collecting all the vertices of $\CTe'$ that are not endpoints of $e$ in the set $\CVe'$,
there is a two-color map $\col: \CVe' \to \{1,2\}$ so that for all $\kappa \in \CTe'$,
the two vertices of $\kappa$ that are not endpoints of $e$, say
$\{\ba_\kappa^n\}_{1 \leq n \leq 2}$, satisfy $\col(\ba_\kappa^n) = n$.
\end{itemize}
\end{proposition}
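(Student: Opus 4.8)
The plan is to construct the refinement $\CTe'$ explicitly and geometrically. First I would set up coordinates adapted to the edge $e$: write $e = [\ba, \bb]$ where $\bb$ is the other endpoint of $e$, and parametrize $\omega_e$ by a direction along $e$ and a two-dimensional ``cross-section'' transverse to $e$. Since every tetrahedron in $\CTe$ shares the edge $e$, each such tetrahedron has its two remaining vertices; projecting these onto a plane transverse to $e$, the edge patch $\CTe$ becomes (combinatorially) a fan of triangles around the point where $e$ meets that plane, i.e., a triangulation of an annular-type or disk-type region. The distinguished tetrahedron $K_j$ corresponds to one of these triangles. The task is then essentially two-dimensional: refine this planar triangulation so that the result is two-colorable in the sense of (iv), keeping $K_j$ as one cell, keeping shape-regularity under control, and — the new constraint compared to a purely planar statement — arranging that the two ``free'' vertices of each refined tetrahedron lie on $\partial\oma$.

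Next I would address the coloring/parity issue, which is the heart of (iv). A fan of $m$ triangles around a central point has a natural 2-coloring of its non-central vertices (alternate colors as one goes around) precisely when $m$ is even, i.e., when the edge patch has an even number of elements; this corresponds to a closed loop of even length (interior edge patch) or, for a boundary edge patch, a path, which can always be 2-colored. When the loop has odd length one must refine: the standard fix is to bisect one of the triangles of $\CTe$ other than $K_j$ — introducing a new vertex on the outer boundary and splitting that triangle into $K_+$ and $K_-$ as in Figure~\ref{fig_two_three_color} — thereby changing the parity and making the whole fan 2-colorable while leaving $K_j$ untouched. One must check there is always at least one triangle available to bisect that is different from $K_j$ (true since the patch has at least two elements, which the running hypotheses of Section~\ref{section_interior_patches} guarantee in the relevant situation; the degenerate small-patch cases can be handled directly). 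I would then define $\col$ by the alternation around the fan, starting the alternation so that $K_j$'s two free vertices receive colors $1$ and $2$ as required by (iv).

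For the shape-regularity bound (iii) and the boundary condition (ii), I would note that bisecting a single simplex of a shape-regular family degrades the shape-regularity constant by at most a fixed factor; choosing the bisection point to be the midpoint of the appropriate outer edge keeps the factor at most $2$. The condition that the two non-$e$ vertices lie on $\partial\oma$ is met automatically for the original tetrahedra of $\CTe$ when the edge patch is such that $e$'s neighbors reach the boundary, and for the newly created bisection vertex one places it on an edge of $\partial\oma$ (for instance the midpoint of an outer edge of the triangle being split, which lies on $\partial\oma$ by construction of the patch). Finally, conformity of $\CTe'$ as a mesh (no hanging nodes) follows because the only modification is a single bisection of one simplex along a face that is not shared interiorly with another element of $\CTe$ — or, if it is shared, one bisects the neighbor compatibly; since this lemma is quoted from~\cite[Lemma~B.2]{ern_vohralik_2020a}, I would in fact simply invoke that reference for the combinatorial construction and only supply the short verification that the $\BH(\ccurl)$ context imposes no additional requirements beyond (i)--(iv). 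The main obstacle is the parity argument in (iv): making sure that, simultaneously, $K_j$ is preserved, the coloring is globally consistent around the (possibly odd-length) fan, and the refinement needed to fix an odd fan can always be carried out away from $K_j$ without creating hanging nodes or blowing up $\kappa$.
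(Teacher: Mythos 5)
Your proposal is correct and follows essentially the same route as the paper: the paper's own justification (after the statement, and via~\cite[Lemma~B.2]{ern_vohralik_2020a}) is precisely the parity argument you give --- take $\CTe' = \CTe$ when $|\CTe|$ is even, and otherwise bisect one tetrahedron different from $K_j$ through the edge $e$ and the midpoint of the opposite (boundary) edge, which preserves conformity within $\CTe$, keeps the new vertex on $\partial\oma$, and costs at most a factor $2$ in shape regularity. Your additional checks (the split faces are never shared by two elements of $\CTe$, and a boundary edge patch is a path and hence always two-colorable without refinement) are accurate.
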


Above, $\CTe'$ can be taken as $\CTe$ when the number of tetrahedra in $\CTe$ is
even. When the number of tetrahedra in $\CTe$ is odd, it is enough to cut one of
the tetrahedra in $\CTe$, different from $K_j$, into two tetrahedra still sharing
the edge $e$. This is illustrated in Figure~\ref{fig_two_three_color}, left, with
the two tetrahedra $K_{+}$ and $K_{-}$ in dark and light grey, respectively.

\subsection{Three-color refinement of vertex patches}
\label{section_three_color}

Here, we present the following technical result to address the last element of the
vertex patch from~\cite[Lemma~B.3]{ern_vohralik_2020a}, illustrated in
Figure~\ref{fig_two_three_color}, right.

\begin{proposition}[Three-color patch refinement]
\label{prop_three_color}
Fix a tetrahedron $K_j \in \CTa$. There exists a conforming refinement
$\CTa'$ of $\CTa$ composed of tetrahedra such that
\begin{itemize}
\item[(i)]
$\CTa'$ contains $K_j$.
\item[(ii)]
All the tetrahedra in $\CTa'$ have $\ba$ as a vertex, and their three other vertices
lie on $\partial \oma$.
\item[(iii)] There holds $\kappa_{\CTa'} \leq C(\kappa_{\CTa})$.
\item[(iv)]
Collecting all the vertices of $\CTa'$ distinct from $\ba$ in the set $\CVa'$, there is
a three-color map $\col: \CVa' \to \{1,2,3\}$ so that for all $\kappa \in \CTa'$,
the three vertices of $\kappa$ distinct from $\ba$, say $\{\ba_\kappa^n\}_{1 \leq n \leq 3}$,
satisfy $\col(\ba_\kappa^n) = n$.
\end{itemize}
\end{proposition}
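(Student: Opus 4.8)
My plan is to reduce the three-dimensional statement to a two-dimensional combinatorial problem about the triangulated surface seen from $\ba$, and then to build the refinement from a barycentric subdivision corrected near $K_j$.

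For $K\in\CTa$ let $F_K$ denote the face of $K$ opposite the vertex $\ba$. By shape regularity each vertex of $F_K$ sits at distance $\simeq h_K$ from $\ba$, and $K=\operatorname{conv}(\ba,F_K)$ is the cone over $F_K$ with apex $\ba$; since any two tetrahedra of the patch meet along a full face, a full edge, or only at $\ba$, the faces $\{F_K\}_{K\in\CTa}$ glue into a triangulated surface $\Sigma\eq\bigcup_{K\in\CTa}F_K$, a topological sphere for an interior patch and a topological disk for a boundary patch. A conforming refinement $\CTa'$ all of whose tetrahedra are cones $\operatorname{conv}(\ba,\tau)$ over triangles $\tau$ with vertices on $\partial\oma$ (property (ii)) is necessarily the family of cones over a refinement $\Sigma'$ of $\Sigma$. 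Under this dictionary, (i) becomes ``$\Sigma'$ contains the triangle $T_0\eq F_{K_j}$''; (iv) becomes ``the vertices of $\Sigma'$ carry a $3$-colouring in which every triangle is rainbow'', its precise form following by renaming colours, since in any rainbow colouring the three vertices of a triangle automatically receive distinct colours; and (iii) amounts to shape regularity of $\Sigma'$, as a cone $\operatorname{conv}(\ba,\tau)$ over a shape-regular triangle $\tau\subset\partial\oma$ at distance $\simeq h_K$ from $\ba$ is itself shape-regular with constant $C(\kappa_{\CTa})$.

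The model object is the barycentric subdivision $\Sigma^{\mathrm b}$: it is shape-regular with constant $C(\kappa_{\CTa})$ and carries the canonical rainbow colouring (mesh vertices $\mapsto 1$, edge midpoints $\mapsto 2$, face barycenters $\mapsto 3$), its only flaw being that it subdivides $T_0$. I would start from $\Sigma^{\mathrm b}$ and undo this subdivision inside the closed star of $T_0$: delete the barycenter of $T_0$, and for each of the (at most three) edges $f$ of $T_0$ delete the midpoint $m_f$ and, on each of the two triangles of $\Sigma$ containing $f$, merge the two sub-triangles meeting at $m_f$. This operation is conforming, restores $T_0$ as a single triangle, and a degree count shows that the only vertices whose degree changes parity are the three vertices of $T_0$ and the (at most three) barycenters of the neighbours of $T_0$; all other vertices keep their (even) degrees. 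For an interior patch (triangulated sphere) I would repair this even number of parity defects by a sequence of interior-edge bisections --- bisecting an interior edge flips exactly the parities of its two opposite vertices --- arranged along paths pairing up the offending vertices, chosen so that no edge of $T_0$ is ever bisected and no edge is bisected more than a bounded number of times; the resulting $\Sigma'$ then has all vertices of even degree, hence is rainbow $3$-colourable by Heawood's classical criterion for planar triangulations. Coning back over $\ba$ produces $\CTa'$, the vertex set $\CVa'$, and the map $\col$, and a computation of the cone aspect ratios gives (iii). The boundary-patch case is analogous, using the explicit colouring away from $T_0$ and the fact that a triangulated disk all of whose interior vertices have even degree is rainbow $3$-colourable.

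The heart of the argument, and the step I expect to be most delicate, is this local repair. Keeping $T_0$ --- hence all three of its edges --- entirely unrefined is incompatible with the barycentric colouring, which forces the three vertices of $T_0$ to share a colour, so the correction cannot merely patch an existing colouring but must re-establish global parity, and with it colourability, while never subdividing an edge of $T_0$. One must also run through the geometric configurations of $T_0$ inside the patch --- an interior triangle of $\Sigma$ versus one meeting the relative boundary of $\Sigma$, and the cases of one, two, or three distinct neighbours --- and organise the auxiliary bisections so that the shape-regularity constant of $\CTa'$ stays bounded in terms of $\kappa_{\CTa}$ alone.
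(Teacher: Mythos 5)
Your reduction to the two--dimensional link of $\ba$ is the right move, and several ingredients are sound: the equivalence between property (iv) and a proper (hence automatically rainbow) $3$-colouring of the surface triangulation, the barycentric subdivision as the canonical $3$-colourable refinement, the correct bookkeeping of which vertex degrees change parity when the subdivision of $T_0$ is undone, the correct local computation that bisecting an interior edge flips exactly the parities of the two opposite apexes, and the invocation of the classical even-degree criterion for $3$-colourability of triangulated spheres and disks. Note that the paper itself does not prove this statement (it is labelled a Proposition precisely because it is quoted from Lemma~B.3 of the cited reference), so there is no in-paper argument to compare against; the question is only whether your argument closes.

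It does not, and the gap sits exactly where you flag it. Repairing the defect set $\{a,b,c,g_{T_1},g_{T_2},g_{T_3}\}$ by edge bisections amounts to finding a $T$-join in the auxiliary graph whose edges are the opposite-apex pairs of the bisectable edges, and the three one-move pairings $\{c,g_{T_1}\}$, $\{a,g_{T_2}\}$, $\{b,g_{T_3}\}$ are precisely the opposite pairs of the three edges of $T_0$ --- the very edges you must not bisect. So the obvious repairs are exactly the excluded ones, and you must instead exhibit longer admissible chains; but connectivity of the admissible auxiliary graph is not automatic (for the octahedron, for instance, the opposite-apex relation decomposes into three disjoint pairs), so the existence of such chains, their boundedness, the preservation of shape regularity under the iterated bisections, and the uniformity over all geometric configurations of $T_0$ (interior versus boundary triangle, coinciding neighbours, the disk criterion involving only interior vertices) all remain to be established. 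Since this repair step is the entire content of the proposition beyond the routine reductions, the proposal is a credible plan rather than a proof.
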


\subsection{Proof of Theorem~\ref{thm_Hc_cons} for interior patches} \label{sec_main_proof}

We are now ready to prove Theorem~\ref{thm_Hc_cons} for interior patches.

\begin{proof}[Proof of Theorem~\ref{thm_Hc_cons} for interior patches]
Denote by
\begin{equation*}
\bv^\star \eq
\arg \min_{\substack{
\bv \in \BH_{0,\Gamma}(\ccurl,\oma) \\ \curl \bv = \bj_p
}}
\|\bv - \bchi_p\|_{\oma}
\end{equation*}
the continuous minimizer.

We rely on the enumeration $K_j$, $1 \leq j \leq |\CTa|$, from Proposition~\ref{prop_enumeration},
see Figure~\ref{fig_ex_numbering} for illustration.
Following~\cite{braess_pillwein_schoberl_2009a,chaumontfrelet_ern_vohralik_2021a,ern_vohralik_2020a},
we construct an admissible $\bxi_p$ from the discrete minimization set
$\ND_p(\CTa) \cap \BH_{0,\Gamma}(\ccurl,\oma)$
by sequential element-wise minimizations following this enumeration.
Specifically, for each element $K_j$, $1 \leq j \leq |\CTa|$, we define
$F_j^{\rm ext} \eq \partial K_j \cap \partial \oma$ and the set of faces
$\CF_j \eq \{F_j^{\rm ext}\} \cup \CF_j^\sharp$ consisting of the face
$F_j^{\rm ext}$ on the patch boundary and of the faces of $K_j$ with neighbors that come sooner
in the enumeration, with a smaller index. We also denote the local volume data by
$\bj_p^j \eq \bj_p|_{K_j} \in \RT_p(K_j)$ and $\bchi_p^j \eq \bchi_p|_{K_j} \in \ND_p(K_j)$.
We will then iteratively define a boundary datum $\br_p^j$, see Figure~\ref{fig_ex_minimizations}
for illustration and Step~1 below for details, leading to the definition
\begin{equation}
\label{tmp_definition_bxipj}
\bxi_p|_{K_j}
\eq
\bxi_p^j
\eq
\arg \min_{\substack{
\bv_p \in \ND_p(K_j)
\\
\curl \bv_p = \bj_p^j
\\
\bv_p|^{\btau}_{\CF_j} = \br_p^j
}}
\|\bv_p - \bchi_p^j\|_{K_j}.
\end{equation}

\begin{figure}
\centerline{\includegraphics[width=0.5\linewidth]{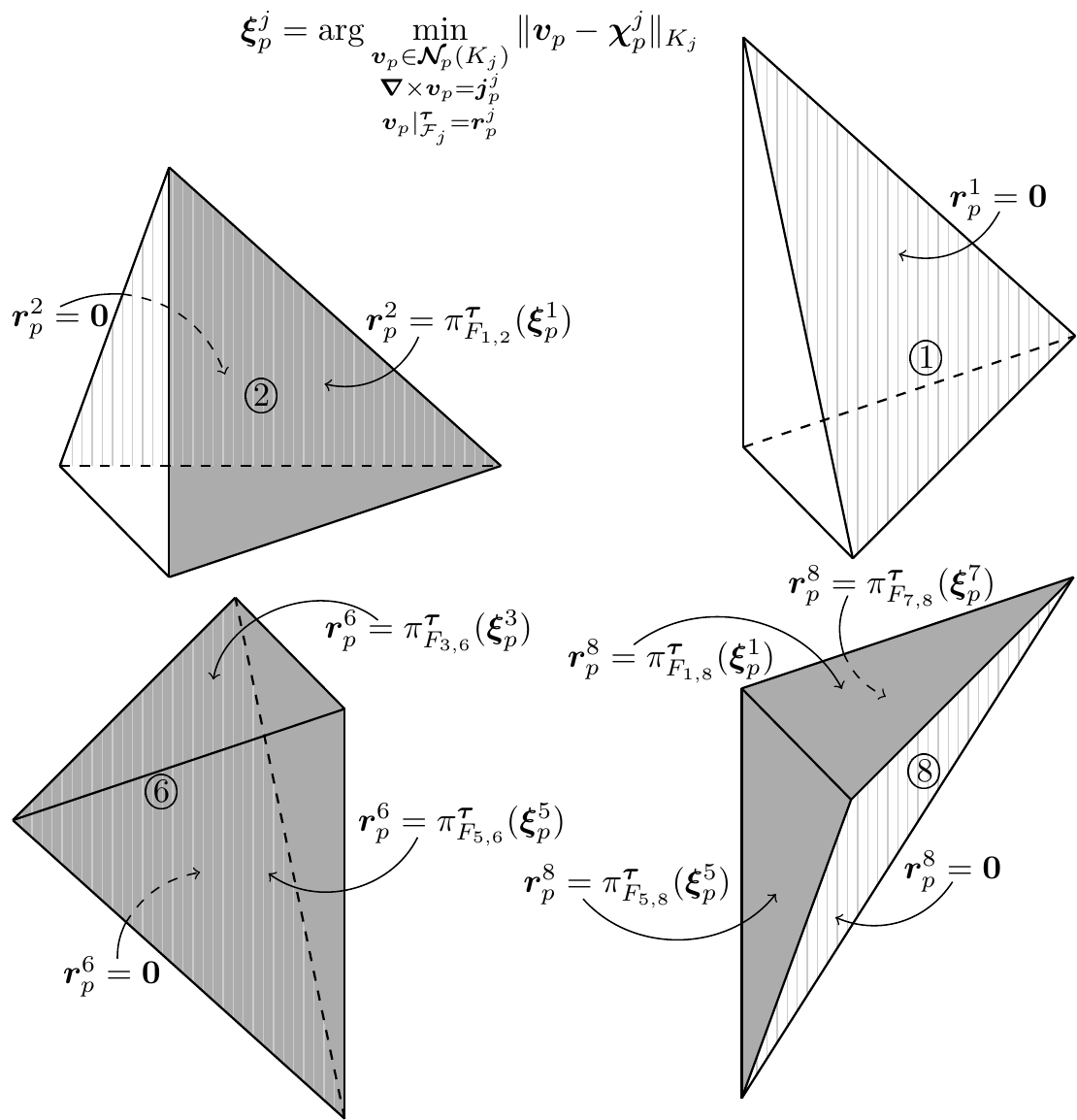}}
\caption{Minimizations~\eqref{tmp_definition_bxipj} on the elements $K_j$, $j=1,2,6,8$, of the patch $\CTa$ as shown and enumerated in Figure~\ref{fig_ex_numbering}, left. The boundary datum $\br_p^j$ is zero on the external faces $F_j^{\rm ext} \subset \pt \oma$ (this is always imposed) (faces $F_j^{\rm ext}$ hatched) and induced by the tangential trace of $\bxi_p^i$  on the previously enumerated elements $K_i$ as defined in Step~1 (there is no such datum on $K_1$ and respectively 1,2, and 3 on $K_2$, $K_6$, and $K_8$) (interfaces in grey).}
\label{fig_ex_minimizations}
\end{figure}

We prove, by induction, in Step~2 below, that at each step $j$, the data
$\bj_p^j$ and $\br_p^j$ are admissible in the sense of Definition~\ref{def_compatible_data},
with in particular $\br_p^j\in \ND_p(\Gamma_{\CF_j})$. Thus, the problems
in~\eqref{tmp_definition_bxipj} are well-posed. Then, in Step~3, we prove that
\begin{equation}
\label{tmp_stability_bxipj}
\|\bxi_p^j-\bchi_p^j\|_{K_j}
\leq
C(\kappa_{\CTa})
\|\bv^\star-\bchi_p\|_{\oma}.
\end{equation}
Finally, in Step~4, relying on the boundary data $\br_p^j$, we will establish that
$\bxi_p \in \ND_p(\CTa) \cap \BH_{0,\Gamma}(\ccurl,\oma)$,
showing that $\bxi_p$ belongs to the discrete minimization set in
Theorem~\ref{thm_Hc_cons}. This will conclude the proof since then
\begin{multline*}
\min_{\substack{
\bv_p \in \ND_p(\CTa) \cap \BH_{0,\Gamma}(\ccurl,\oma)
\\
\curl \bv_p = \bj_p
}}
\|\bv_p - \bchi_p\|_{\oma}
\leq
\|\bxi_p - \bchi_p\|_{\oma}
=
\Bigg\{\sum_{j=1}^{|\CTa|}
\|\bxi_p^j - \bchi_p^j\|_{K_j}^2\Bigg \}^{\frac 1 2}
\\
\leq
C(\kappa_{\CTa}) |\CTa|^{\frac 1 2}
\|\bv^\star - \bchi_p\|_{\oma}
=
C(\kappa_{\CTa}) |\CTa|^{\frac 1 2}
\min_{\substack{
\bv \in \BH_{0,\Gamma}(\ccurl,\oma)
\\
\curl \bv = \bj_p
}}
\|\bv - \bchi_p\|_{\oma},
\end{multline*}
and the number $|\CTa|$ of tetrahedra in the patch $\CTa$ is bounded by constant
only depending on the shape-regularity parameter $\kappa_{\CTa}$.

{\bf Step~1.}
We start by defining the boundary data $\br_p^j$ used in~\eqref{tmp_definition_bxipj}.
We let (i) $\br_p^j|_{F_j^{\rm ext}} \eq \bzero$ on the external face $F_j^{\rm ext}$;
and (ii) on each face $F_{i,j} \in \CF_j^\sharp$ shared by $K_j$ and $K_i$, $i<j$,
we set $\br_p^j|_{F_{i,j}} \eq \pi^{\btau}_{F_{i,j}}(\bxi_p^i)$, which we
can do since $\bxi_p^i$ is already defined on the simplices $K_i$ with a smaller index $i$.
This is illustrated in Figure~\ref{fig_ex_minimizations}.

{\bf Step~2.} We now verify that the data constructed above are admissible as per
Definition~\ref{def_compatible_data}, so that problem~\eqref{tmp_definition_bxipj} is well-posed.
Notice that since $\div \bj_p^j = \div (\bj|_{K_j}) = 0$, \eqref{eq_compatible_data_div} is
satisfied by construction. Considering~\eqref{eq_compatible_data_curl}, we have
$\CF_j = \{F_j^{\rm ext}\} \cup \CF_j^\sharp$.
For the exterior face $F_j^{\rm ext}$, the associated data $\br_p^j|_{F_j^{\rm ext}} = \bzero$
always vanishes, and
$\bj_p {\cdot} \bn_{F_j^{\rm ext}} = \scurl_{F_j^{\rm ext}} \br_p^j|_{F_j^{\rm ext}} =0 $
holds true since $\bj_p \in \BH_0(\ddiv,\oma) \cap \RT_p(\CTa)$ by assumption.
According to the enumeration from Proposition~\ref{prop_enumeration}, $\CF_j^\sharp$
is empty on the first element $K_1$, so there is nothing more to verify
for~\eqref{eq_compatible_data_curl} when $j=1$. On the other hand, when $j>1$, the
remaining faces in $\CF_j^\sharp $ are of the form $F_{i,j} = \partial K_i \cap \partial K_j$,
where $K_i$ has been previously visited, $i<j$. We then have
\begin{equation*}
\scurl_{F_{i,j}}(\br_p^j|_{F_{i,j}})
=
\scurl_{F_{i,j}} (\pi_{F_{i,j}}^{\btau}(\bxi_p^i))
=
(\curl \bxi_p^i)|_{F_{i,j}} {\cdot} \bn_{F_{i,j}}
=
\bj_p^i {\cdot} \bn_{F_{i,j}}
=
\bj_p^j {\cdot} \bn_{F_{i,j}}
\end{equation*}
since $\bj_p \in \BH(\ddiv,\oma) \cap \RT_p(\CTa)$ and since $\curl \bxi_p^i = \bj_p^i$
on $K_i$ by induction. As a result, we are left to check~\eqref{eq_compatible_data_trace}.
To do so, following~\eqref{eq_ttrace_Gamma}, we need to find $\BR_p^j \in \ND_p(K_j)$
such that $\br_p^j|_F = \pi^{\btau}_F (\BR_p^j)$ for all faces $F \in \CF_j$.
We distinguish 4 subcases for this purpose.

{\bf Step~2a.} In the first element $K_1$, we have $\CF_1 = \{ F_1^{\rm ext} \}$
and $\br_p^1|_{F_1^{\rm ext}} = \bzero$. It is clear that
$\br_p^1|_{F_1^{\rm ext}} = \pi_{F_1^{\rm ext}}^{\btau}(\bzero)$
which shows~\eqref{eq_compatible_data_trace} for $\BR_p^1 \eq \bzero$.

{\bf Step~2b.} We then consider the case where the element $K_j$, $1 < j < |\CTa|$, is
such that $|\CF_j^\sharp| = 1$, i.e., there is a single element $K_i$ with $i < j$
such that $\CF_j^\sharp = \{F_{i,j}\}$, $F_{i,j} = \partial K_i \cap \partial K_j$.
There exists a unique affine mapping $\psi_{i,j}: K_i \to K_j$
that leaves the face $F_{i,j}$ invariant, and we set
$\BR_p^j \eq \psi_{i,j}^{\rm c}(\bxi_p^i) \in \ND_p(K_j)$.
Since the Piola mapping preserves tangential traces, maps $F_i^{\rm ext}$
onto $F_j^{\rm ext}$, and leaves $F_{i,j}$ invariant, we clearly have
$\pi^{\btau}_{F_{i,j}} (\BR_p^j) = \pi^{\btau}_{F_{i,j}} (\bxi_p^i) = \br_p^j|_{F_{i,j}}$
and $\pi^{\btau}_{F_j^{\rm ext}} (\BR_p^j) = \bzero$ since
$\pi^{\btau}_{F_i^{\rm ext}}(\bxi_p^i) = \bzero$.
This shows that $\br_p^j|_F = \pi^{\btau}_F (\BR_p^j)$ for all $F \in \CF_j$,
so that~\eqref{eq_compatible_data_trace} is satisfied in view of
definition~\eqref{eq_ttrace_Gamma}.

{\bf Step~2c.}
The next case is an element $K_j$ with $1 < j < |\CTa|$ such that $|\CF_j^\sharp| = 2$.
We will use an argument similar to the one above in Step~2b, relying this time on Piola
mappings from all tetrahedra sharing the edge $e$ common to the two faces in $\CF_j^\sharp$.
First, since $|\CF_j^\sharp| = 2$, Proposition~\ref{prop_enumeration} implies that $K_j$ is
the lastly enumerated element of the edge patch $\CTe$. Invoking Proposition~\ref{prop_two_color},
there is a refined edge patch $\CTe'$ such that $\CTe' = \CTe$ if the number of tetrahedra in
$\CTe$ is even, whereas one of the tetrahedra in $\CTe$, different from $K_j$,
has been cut into two if $|\CTe|$ is odd. In any case, $\CTe' = \{\kappa_1,\dots,\kappa_n\}$,
$\kappa_n = K_j$, and the vertices of $\CTe'$ that are not endpoints of the edge $e$ are
colored by two colors (alternating along the numbering $1,\dots,n$).

Let $\psi_{\ell,n}: \kappa_\ell \to \kappa_n$ be the unique invertible affine mapping of the
tetrahedron $\kappa_\ell$ to the tetrahedron $\kappa_n$ preserving the two endpoints of the
edge $e$ and the colors of the two other vertices; this in particular means that the faces
$F_\ell^{\rm ext}$ are mapped to $F_j^{\rm ext}$, the two faces in $\CF_j^\sharp$ are left
invariant, and the other faces sharing the edge $e$ have their remaining vertex mapped while
preserving its color.
Denote by $\varepsilon_{\ell,n}$ the sign of determinant of the Jacobian of $\psi_{\ell,n}$.
Let finally, for $1 \leq \ell \leq n-1$, $\iota(\ell)$ be the index of the element
$K_{\iota(\ell)} \in \CTe$ such that $\kappa_\ell \subset K_{\iota(\ell)}$ (if the number
of tetrahedra in $\CTe$ is even, we can actually always write $\kappa_\ell = K_{\iota(\ell)}$;
if not, a strict inclusion only holds on the two subsimplices of the simplex that has been cut).
This allows for the following ``folding'' Piola mappings definition:
\begin{equation}
\label{eq_R_c}
\BR_p^j \eq - \sum_{\ell=1}^{n-1}
\varepsilon_{\ell,n} \psi_{\ell,n}^{\rm c}(\bxi_p^{\iota(\ell)}|_{\kappa_\ell})
\in \ND_p(K_j).
\end{equation}
As $K_j$ is the last element of the edge patch $\CTe$, for all $1 \leq \ell \leq n-1$,
$\bxi_p^{\iota(\ell)}$ have been previously defined, and this in such a way
that (i) their tangential traces vanish on $\partial \oma$; and (ii) their tangential
traces match on faces shared by two previously enumerated elements.
Now, since the faces in $F_\ell^{\rm ext} \subset \partial \oma$ are mapped to $F_j^{\rm ext}$,
$\pi^{\btau}_{F_j^{\rm ext}} (\BR_p^j) = \bzero$ follows from
$\pi^{\btau}_{F_\ell^{\rm ext}} (\bxi_p^{\iota(\ell)}|_{\kappa_\ell}) = \bzero$.
Similarly, all the faces sharing the edge $e$ other than the two faces in $\CF_j^\sharp$
are mapped twice, with two opposite signs in view of $\varepsilon_{\ell,n}$ (indeed,
$\varepsilon_{\ell_{-},n} + \varepsilon_{\ell_+,n} =0$ if the two elements
$\kappa_{\ell_{-}}$ and $\kappa_{\ell_+}$ from $\CTe'$ share a common face),
leaving only the contributions from the neighbors from the two faces in $\CF_j^\sharp$.
Thus $\pi^{\btau}_{F_{i,j}} (\BR_p^j)= \pi^{\btau}_{F_{i,j}}(\bxi_p^i) = \br_p^j|_{F_{i,j}}$
for the (two) faces $F_{i,j} \in \CF_j^\sharp$, and~\eqref{eq_compatible_data_trace} is satisfied.

{\bf Step~2d.} We finish with the last element $K_j$, $j = |\CTa|$.
In this case we have $|\CF_j^\sharp| = 3$. In extension of Step~2c,
we rely on Piola mappings from all the tetrahedra of the patch $\CTa$
other than $K_j$. Following Proposition~\ref{prop_three_color}, we invoke
for this purpose a three-color patch refinement $\CTa'$ such that
$\CTa' = \{\kappa_1,\dots,\kappa_n\}$, $\kappa_n = K_j$.

Let $\psi_{\ell,n}: \kappa_\ell \to \kappa_n$ be the unique invertible affine mapping of
the tetrahedron $\kappa_\ell$ to the tetrahedron $\kappa_n$ preserving the vertex $\ba$
and the colors of the three other vertices; this in particular means that the faces
$F_\ell^{\rm ext}$ are mapped to $F_j^{\rm ext}$ and the other faces have their vertices
mapped while preserving their color.
Denote by $\varepsilon_{\ell,n}$ the sign of determinant of the Jacobian of $\psi_{\ell,n}$.
Let finally, for $1 \leq \ell \leq n-1$, $\iota(\ell)$ be the index of the element
$K_{\iota(\ell)} \in \CTa$ such that $\kappa_\ell \subset K_{\iota(\ell)}$.
This allows for the following ``folding'' Piola mappings definition:
\begin{equation}
\label{eq_R_d}
\BR_p^j \eq -\sum_{\ell=1}^{n-1}
\varepsilon_{\ell,n} \psi_{\ell,n}^{\rm c}(\bxi_p^{\iota(\ell)}|_{\kappa_\ell})
\in \ND_p(K_j).
\end{equation}
As above in Step~2c, we observe that (i) all $\bxi_p^{\iota(\ell)}$ have been
previously defined and have a zero/matching tangential trace; (ii) each boundary
face of $\CTa'$ (except of $F_j^{\rm ext}$) is mapped to $F_j^{\rm ext}$; (iii) each
interior face of $\CTa'$ other than the three faces from $\CF_j^\sharp$ is mapped twice,
each time with an opposite sign; and (iv) the three faces from $\CF_j^\sharp$ are only
mapped once. This yields $\pi^{\btau}_{F_j^{\rm ext}} (\BR_p^j) = \bzero$
together with
$\pi^{\btau}_{F_{i,j}} (\BR_p^j)= \pi^{\btau}_{F_{i,j}}(\bxi_p^i) = \br_p^j|_{F_{i,j}}$
for the (three) faces $F_{i,j} \in \CF_j^\sharp$, so that~\eqref{eq_compatible_data_trace} follows.

{\bf Step~3.}
We now show~\eqref{tmp_stability_bxipj}, that is, at each step $1 \leq j \leq |\CTa|$,
the element $\bxi^j_p$ given by~\eqref{tmp_definition_bxipj} is stable as compared to the
continuous minimizer $\bv^\star$. Let
\begin{equation}
\label{eq_VKj}
\BV(K_j)
\eq
\left \{
\bv \in \BH(\ccurl,K_j) \; | \;
\curl \bv = \bj_p^j, \;
\bv|_{\CF_j}^{\btau} = \br_p^j
\right \}.
\end{equation}
From Step~2, we know that this set is nonempty. To show~\eqref{tmp_stability_bxipj},
we will construct for every $1 \leq j \leq |\CTa|$ an element $\bw^\star_j \in \BV(K_j)$ such that
\begin{equation}
\label{tmp_stability_wstar}
\|\bw^\star_j-\bchi_p^j\|_{K_j}
\leq
C(\kappa_{\CTa}) \|\bv^\star-\bchi_p\|_{\oma}.
\end{equation}
Estimate~\eqref{tmp_stability_bxipj} then follows from Lemma~\ref{lemma_tetrahedron} since
\begin{align*}
\|\bxi_p^j-\bchi_p^j\|_{K_j}
&=
\min_{\bw_p \in \BV(K_j) \cap \ND_p(K_j)}
\|\bw_p-\bchi_p^j\|_{K_j}
\\
&\leq
C(\kappa_{K_j})
\min_{\bw \in \BV(K_j)}
\|\bw-\bchi_p^j\|_{K_j}
\leq
C(\kappa_{K_j})
\|\bw^\star_j-\bchi_p^j\|_{K_j}.
\end{align*}

{\bf Step~3a.} In the first element $K_1$, we actually readily observe that
$\bw^\star_j \eq \bv^\star|_{K_1}$ belongs to the minimization set $\BV(K_1)$,
so that~\eqref{tmp_stability_wstar} is immediately satisfied with the constant
$C(\kappa_{\CTa})=1$.

{\bf Step~3b.} We next consider those elements $K_j$, $1 < j < |\CTa|$, for which
$|\CF_j^\sharp| = 1$, and we denote by $1 \leq i < j$ the index such that
$\CF_j^\sharp = \{F_{i,j}\}$ with $F_{i,j} = \partial K_i \cap \partial K_j$.
As in Step~2b, we consider the affine map $\psi_{i,j}: K_i \to K_j$
that leaves the face $F_{i,j}$ invariant, and we set
\begin{equation}
\label{eq_bw_star}
\bw^\star_j \eq \bv^\star|_{K_j} - \psi_{i,j}^{\rm c}(\bv^\star|_{K_i}-\bxi_p^i).
\end{equation}

We now show that $\bw^\star_j$ belongs to $\BV(K_j)$ given by~\eqref{eq_VKj}.
First, by the Piola mapping, $\bw^\star_j \in \BH(\ccurl,K_j)$. Moreover,
recalling~\eqref{eq_piola_commute}, it is clear that
\begin{equation*}
\curl \bw^\star_j
=
\curl (\bv^\star|_{K_j}) - \psi_{ij}^{\rm d}(\curl (\bv^\star|_{K_i}-\bxi_p^i))
=
\curl (\bv^\star|_{K_j})
=
\bj_p^j.
\end{equation*}
Finally, roughly speaking, the fact that $\bw^\star_j|_{\CF_j}^{\btau} = \br_p^j$
follows from~\eqref{eq_bw_star} since all $\bv^\star|_{K_j}$, $\bv^\star|_{K_i}$, and $\bxi_p^i$
have a zero tangential trace on $\partial \oma$ and the tangential trace of $\bv^\star$
is continuous across $F_{i,j}$, so that its contribution vanishes in $\bw^\star_j$ and only
the desired contribution from $\bxi_p^i$ is left. However, in contrast to Step~2b carried
out for piecewise polynomials, we cannot rigorously prove this in this strong sense
because we cannot easily localize the notion of tangential trace to one face for
$\BH(\ccurl)$ functions. As a result, we have to resort to the weak
notion of tangential trace introduced in~\eqref{eq_weak_trace}.
For this purpose, we first note that, following Step~2b,
\begin{equation*}
\bw^\star_{j} = \bv^\star|_{K_j} - \psi_{i,j}^{\rm c}(\bv^\star|_{K_i}) + \BR_p^{j},
\end{equation*}
with $\BR_p^{j}|_{\CF_j}^{\btau} = \br_p^j$. Thus, we need to show that
$(\bv^\star|_{K_j} - \psi_{i,j}^{\rm c}(\bv^\star|_{K_i}))|_{\CF_j}^{\btau} = \bzero$.
Recall that $\CF_j = \{F_j^{\rm ext}\} \cup \CF_j^\sharp = \{F_j^{\rm ext},F_{i,j}\}$.
Following~\eqref{eq_weak_trace}, let $\bphi \in \BH^1_{\btau,\CF_j^{\rm c}}(K_j)$.
Letting $\psi_{j,i} = (\psi_{i,j})^{-1}$, the function
\begin{equation}
\label{eq_tphi}
\widetilde \bphi|_{K_j} \eq \bphi,
\qquad
\widetilde \bphi|_{K_i} \eq \psi_{j,i}^{\rm c}(\bphi)
\end{equation}
belongs to $\BH_{0,\partial(K_i \cup K_j) \setminus \partial \oma}(\ccurl,K_i \cup K_j)$.
Then, noticing that the sign of the determinant of the Jacobian of $\psi_{i,j}$ is negative,
\eqref{eq_piola_v_curlw} allows us to write
\begin{align*}
& {} (\curl (\bv^\star|_{K_j} - \psi_{i,j}^{\rm c}(\bv^\star|_{K_i})),\bphi)_{K_j}
-
(\bv^\star|_{K_j} - \psi_{i,j}^{\rm c}(\bv^\star|_{K_i}),\curl \bphi)_{K_j} \\
= {} &
(\curl \bv^\star,\bphi)_{K_j}
-
(\bv^\star,\curl \bphi)_{K_j}
+
(\curl \bv^\star,\psi_{j,i}^{\rm c}(\bphi))_{K_i}
-
(\bv^\star,\curl \psi_{j,i}^{\rm c}(\bphi))_{K_i}
\\
= {} &
(\curl \bv^\star,\widetilde \bphi)_{K_i \cup K_j}
-
(\bv^\star,\curl \widetilde \bphi)_{K_i \cup K_j} =0,
\end{align*}
since $\bv^\star|_{K_i \cup K_j} \in \BH_{0,\partial(K_i \cup K_j) \cap \partial \oma}(\ccurl,K_i \cup K_j)$.

Finally, we have
\begin{equation*}
\bw^\star_j-\bchi_p^j = (\bv^\star|_{K_j}-\bchi_p^j)
-\psi_{i,j}^{\rm c}(\bv^\star|_{K_i}-\bchi_p^i)
+\psi_{i,j}^{\rm c}(\bxi_p^i-\bchi_p^i),
\end{equation*}
and recalling~\eqref{eq_piola_stable}, it follows that
\begin{equation*}
\|\bw^\star_j-\bchi_p^j\|_{K_j}
\leq
\|\bv^\star-\bchi_p^j\|_{K_j}
+ C(\kappa_{\CTa})\big(
\|\bv^\star-\bchi_p^i\|_{K_i}
+
\|\bxi_p^i-\bchi_p^i\|_{K_i}\big)
\leq
C(\kappa_{\CTa})
\|\bv^\star-\bchi_p\|_{\oma},
\end{equation*}
since~\eqref{tmp_stability_bxipj} holds in $K_i$ by induction.
Thus~\eqref{tmp_stability_wstar} holds.

{\bf Step~3c.}
The next situation is the case of an element $K_j$, $1 < j < |\CTa|$, with $|\CF_j^\sharp| = 2$.
We keep the notation of Step~2c for the two-color refinement
$\CTe'$ of the patch around the edge $e$ and the associated affine mappings $\psi_{\ell,n}$.
We set, in extension of~\eqref{eq_bw_star} from the previous Step~3b and following the
recipe~\eqref{eq_R_c},
\begin{equation}
\label{eq_bw_star_c}
\bw^\star_j
\eq
\bv^\star|_{K_j}
+
\sum_{\ell=1}^{n-1}
\varepsilon_{\ell,n}\psi^{\rm c}_{\ell,n}
(
\bv^\star|_{\kappa_\ell}-\bxi_p^{\iota(\ell)}|_{\kappa_\ell}
).
\end{equation}

We now again show that $\bw^\star_j \in \BV(K_j)$ given by~\eqref{eq_VKj}. First,
by the Piola mappings, $\bw^\star_{j} \in \BH(\ccurl,K_j)$. Moreover, since
$\curl(\bv^\star|_{K_j}) = \bj_p^j$ and
$\curl(\bv^\star|_{\kappa_\ell})
=
\curl(\bxi_p^{\iota(\ell)}|_{\kappa_\ell})
=
\bj_p^{\iota(\ell)}|_{\kappa_\ell}$,
it is clear that $\bw^\star_j$ satisfies the curl constraint of $\BV(K_j)$,
$\curl \bw^\star_j = \bj_p^j$. We then turn to the trace constraint
$\bw^\star_j|_{\CF_j}^{\btau} = \br_p^j$. We rewrite~\eqref{eq_bw_star_c}, using~\eqref{eq_R_c}, as
\begin{equation*}
\bw^\star_j
=
\sum_{\ell=1}^n \varepsilon_{\ell,n} \psi_{\ell,n}^{\rm c}(\bv^\star|_{\kappa_\ell})
-
\sum_{\ell=1}^{n-1} \varepsilon_{\ell,n} \psi_{\ell,n}^{\rm c}(\bxi_p^{\iota(\ell)}|_{\kappa_\ell})
=
\sum_{\ell=1}^n \varepsilon_{\ell,n} \psi_{\ell,n}^{\rm c}(\bv^\star|_{\kappa_\ell})
+
\BR_p^j,
\end{equation*}
with $\psi_{n,n}^{\rm c}$ identity and $\varepsilon_{n,n}=1$.
Since $\BR_p^{j}|_{\CF_j}^{\btau} = \br_p^j$ from Step~2c, we merely need to show that
$\big(\sum_{\ell=1}^n \varepsilon_{\ell,n} \psi_{\ell,n}^{\rm c}(\bv^\star|_{\kappa_\ell})\big)|_{\CF_j}^{\btau} = \bzero$.
Intuitively, this is rather clear; following the reasoning of Step 2c,
(i) all the faces $F_\ell^{\rm ext}$ are mapped to $F_j^{\rm ext}$, yielding a zero
tangential trace; (ii) all the faces sharing the edge $e$, including the two faces
in $\CF_j^\sharp$, are mapped twice with two opposite signs, yielding a zero tangential
trace. To show this rigorously, we again rely on the  characterization~\eqref{eq_weak_trace}.
Recalling that $\CF_j = \{F_j^{\rm ext}\} \cup \CF_j^\sharp$, consider thus
$\bphi \in \BH^1_{\btau,\CF_j^{\rm c}}(K_j)$. In extension of~\eqref{eq_tphi}, let us define
\begin{equation} \label{eq_tphi_c}
\widetilde \bphi|_{\kappa_\ell} \eq \psi_{n,\ell}^{\rm c}(\bphi) \qquad 1 \leq \ell \leq n.
\end{equation}
By the two-coloring of Proposition~\ref{prop_two_color}, as in Step~2c, this ``unfolding''
of $\bphi$ gives $\widetilde \bphi \in \BH_{0,\partial\ome \setminus \partial \oma}(\ccurl,\ome)$.
Then, using~\eqref{eq_piola_v_curlw}, we have
\begin{align*}
{} & \Bigg(
\curl \Bigg(
\sum_{\ell=1}^n \varepsilon_{\ell,n} \psi_{\ell,n}^{\rm c}(\bv^\star|_{\kappa_\ell})
\Bigg),
\bphi
\Bigg)_{K_j}
-
\Bigg(
\sum_{\ell=1}^n \varepsilon_{\ell,n} \psi_{\ell,n}^{\rm c}(\bv^\star|_{\kappa_\ell}),
\curl \bphi\Bigg)_{K_j} \\
= {} &
\sum_{\ell=1}^n
\left \{
(\curl \bv^\star,\psi_{n,\ell}^{\rm c}(\bphi))_{\kappa_\ell}
-
(\bv^\star,\curl (\psi_{n,\ell}^{\rm c}(\bphi)))_{\kappa_\ell}
\right \} \\
= {} & (\curl \bv^\star,\widetilde \bphi)_{\ome}
-
(\bv^\star,\curl \widetilde \bphi)_{\ome} = 0,
\end{align*}
since $\bv^\star|_{\ome} \in \BH_{0,\partial \ome \cap \partial \oma}(\ccurl,\ome)$,
so that indeed $\bw^\star_j \in \BV(K_j)$.

We conclude this step by showing that~\eqref{tmp_stability_wstar} holds true.
First, we write that
\begin{align*}
\bw^\star_j - \bchi_p^j
= {} &
\bv^\star|_{K_j}-\bchi_p^j
+
\sum_{\ell-1}^{n-1}
\varepsilon_{\ell,n} \psi_{\ell,n}^{\rm c}
(
\bv^\star|_{\kappa_\ell} - \bchi_p^{\iota(\ell)}|_{\kappa_\ell}
) \\
{} & +
\sum_{\ell-1}^{n-1}
\varepsilon_{\ell,n} \psi_{\ell,n}^{\rm c}
(
\bchi_p^{\iota(\ell)}|_{\kappa_\ell} - \bxi_p^{\iota(\ell)}|_{\kappa_\ell}
),
\end{align*}
and it follows that, recalling~\eqref{eq_piola_stable},
\begin{align*}
\|\bw^\star_j - \bchi_p^j\|_{K_j}
&\leq
\|\bv^\star-\bchi_p^j\|_{K_j}
+
\sum_{\ell-1}^{n-1}
\|\psi_{\ell,n}^{\rm c}(\bv^\star|_{\kappa_\ell} - \bchi_p^{\iota(\ell)}|_{\kappa_\ell})\|_{K_j}
+
\sum_{\ell-1}^{n-1}
\|\psi_{\ell,n}^{\rm c}(\bchi_p^{\iota(\ell)}|_{\kappa_\ell} - \bxi_p^{\iota(\ell)}|_{\kappa_\ell})\|_{K_j}
\\
&\leq
\|\bv^\star-\bchi_p^j\|_{K_j}
+
C(\kappa_{\CTe'})
\left (
\sum_{\ell-1}^{n-1}
\|\bv^\star - \bchi_p^{\iota(\ell)}\|_{\kappa_\ell}
+
\sum_{\ell-1}^{n-1}
\|\bchi_p^{\iota(\ell)} - \bxi_p^{\iota(\ell)}\|_{\kappa_\ell}
\right )
\\
&\leq
C(\kappa_{\CTe'})
\left (
\|\bv^\star-\bchi_p\|_{\oma}
+
\sum_{\ell-1}^{n-1}
\|\bchi_p^{\iota(\ell)} - \bxi_p^{\iota(\ell)}\|_{K_{\iota(\ell)}}
\right ).
\end{align*}
Then~\eqref{tmp_stability_wstar} follows since
$\kappa_{\CTe'} \leq 2 \kappa_{\CTe} \leq C(\kappa_{\CTa})$
and, by induction, \eqref{tmp_stability_bxipj} holds for for $i=\iota(\ell) < j$,
$1 \leq \ell \leq n-1$.

{\bf Step~3d.}
The proof for the last element is analogous to that of Step~3c.
We in particular still rely on~\eqref{eq_bw_star_c} and~\eqref{eq_tphi_c} where,
this time, the three-color patch refinement $\CTa' = \{\kappa_1,\dots,\kappa_n\}$,
$\kappa_n = K_j$, of Proposition~\ref{prop_three_color} is employed. Here,
$\widetilde \bphi \in \BH(\ccurl,\oma)$, whereas, since $\Gamma = \partial \oma$
for the considered interior patch case, $\bv^\star \in \BH_{0,\partial \oma}(\ccurl,\oma)$
by definition.
-
{\bf Step~4.}
We finally define $\bxi_p \in \ND_p(\CTa)$ by setting $\bxi_p|_{K_j} \eq \bxi_p^j$
for $1 \leq j \leq |\CTa|$ and verify that $\bxi_p \in \BH_{0,\Gamma}(\ccurl,\oma)$.
By construction, the tangential trace of each $\bxi_p^j$
vanishes on $F_j^{\rm ext}$, and if $F_{i,j}$ is the face shared by two tetrahedra
$K_i$ and $K_j$, the tangential traces of $\bxi_p^i$ and $\bxi_p^j$ match on $F_{i,j}$.
It follows that $\bxi_p \in \BH_{0,\Gamma}(\ccurl,\oma)$. Since, by construction,
$\curl (\bxi_p|_{K_j}) = \curl \bxi_p^j = \bj_p^j = \bj_p|_{K_j}$ for all
$K_j \in \CTa$, this means that that $\curl \bxi_p = \bj_p$ globally in $\oma$,
which concludes the proof.
\end{proof}

\section{Proof of Theorem~\ref{thm_Hc_cons} for boundary patches}
\label{section_boundary_patches}

In this section, we study the boundary patches.
We will work with different patches obtained by geometrical mappings,
some of those will be boundary and some interior in the terminology of Section~\ref{sec_VP}.
In addition to the notation $\Ga$ and $\Gamma$ therefrom
(recall Figures~\ref{fig_patches_1} and~\ref{fig_patches_2}),
we denote by $\Gaess$ the faces from the boundary of $\oma$ sharing the vertex $\ba$ and
belonging to $\Gamma$. Here, the tangential trace (essential boundary condition) is
imposed in Theorem~\ref{thm_Hc_cons}. $\Ga=\Ganat$ then collects the remaining faces
from the boundary of $\oma$ sharing $\ba$. Here, no the tangential trace (natural boundary
condition) is imposed in Theorem~\ref{thm_Hc_cons}. By the assumptions, $\Gaess$ and $\Ganat$
are both connected and have Lipschitz boundaries.

Let $\CTa$ be a vertex patch in the sense of Section~\ref{sec_VP},
interior or boundary. For a given $p \geq 0$ and $\bchi_p \in \ND_p(\CTa)$
and $\bj_p \in \RT_p(\CTa) \cap \BH_{0,\Gamma}(\ddiv,\oma)$ with $\div \bj_p  = 0$,
let
\begin{equation}
\label{eq_minimizers}
\bv_p^\star \eq \arg \min_{\substack{
\bv_p \in \ND_p(\CTa) \cap \BH_{0,\Gamma}(\ccurl,\oma)
\\
\curl \bv_p = \bj_p
}}
\|\bv_p-\bchi_p\|_{\oma}, \quad
\bv^\star \eq \arg \min_{\substack{
\bv \in \BH_{0,\Gamma}(\ccurl,\oma)
\\
\curl \bv = \bj_p
}}
\|\bv-\bchi_p\|_{\oma}
\end{equation}
be respectively the discrete and continuous minimizers from Theorem~\ref{thm_Hc_cons}.
We will consider here the best uniform constant $C_{{\rm st},p,\CTa,\Gamma}$
in the inequality
\begin{equation*}
\|\bv_p^\star-\bchi_p\|_{\oma}
\leq
C_{{\rm st},p,\CTa,\Gamma}
\|\bv^\star-\bchi_p\|_{\oma},
\end{equation*}
i.e.
\begin{equation}
\label{eq_stab}
C_{{\rm st},p,\CTa,\Gamma}
\eq
\sup_{\substack{
\bj_p \in \RT_p(\CTa) \cap \BH_{0,\Gamma}(\ddiv,\oma); \div \bj_p  = 0
\\
\bchi_p \in \ND_p(\CTa)
}}
\frac{\|\bv_p^\star-\bchi_p\|_{\oma}}{\|\bv^\star-\bchi_p\|_{\oma}}.
\end{equation}
For interior patches, we have shown in Section~\ref{section_interior_patches} that
$C_{{\rm st},p,\CTa,\Gamma}$ is uniformly bounded by a constant only dependent on
the patch shape-regularity parameter $\kappa_{\CTa}$. For boundary patches, it is
clear that $C_{{\rm st},p,\CTa,\Gamma}$ is bounded for each $p$, and our goal here
is to show that it is actually uniformly bounded in $p$, again by $C(\kappa_{\CTa})$
only.

\subsection{Plan of the proof}

Let us start by structurally describing how the proof is performed.
The central idea is to transform an arbitrary boundary patch $\CTa$, covering
a polyhedron $\oma$ in the full $\mathbb R^3$ space, into a reference tetrahedron
patch $\hCTo$ that covers the domain given by the right-angled reference tetrahedron
$\widehat \omega_{\bzero}$; this is part of the $\bx_1,\bx_2,\bx_3 \geq 0$ eight-space
such that $\bx_1 + \bx_2 + \bx_3 \leq 1$. The reference tetrahedron patch $\hCTo$ can
possibly have the same mesh topology/connectivity as $\CTa$. In this case, one can imagine 
that the transformation is doable by moving the vertices of the patch $\CTa$, which leads
to the notion of ``equivalent patches''. Establishing this rigorously is, however, an involved
task. To achieve it, we will rely on a graph-drawing result known as Tutte's embedding theorem
in graph theory~\cite{tutte_1963a}. In general, a more involved ``extension'' concept will be
necessary, which serves to prepare conditions in which the Tutte embedding theorem applies.
Once the patch $\CTa$ is transformed into a right-angled reference tetrahedron pach $\hCTo$,
we can use arguments based on mirror symmetries around the faces of $\hCTo$ sharing the vertex
$\bzero$ to further transform the boundary patch $\hCTo$ into an interior patch involving eight
copies of $\hCTo$ itself. Then, we can apply the stability result already established for the
interior patch in Section~\ref{section_interior_patches} and use it to establish the result
for $\hCTo$ and finally $\CTa$. The overall procedure considerably extends and generalizes
the approach in~\cite[Section~7]{ern_vohralik_2020a}, where only one mapping by a mirror
symmetry over a plane was employed to deduce the desired stability result for a boundary
patch form that of an interior patch.

\subsection{Equivalent patches}

As discussed, we will first need the concept of equivalent patches, which,
roughly speaking, corresponds to patches having the same mesh topology/connectivity.
Let $\CTa$ and $\CTb$ be two vertex patches around two possibly different vertices
$\ba$ and $\bb$ in the sense of Section~\ref{sec_VP}. $\CTa$ and $\CTb$ can be interior
or boundary.

\begin{definition}[Equivalent patches]
\label{definition_equiv}
Two vertex patches $\CTa$ and $\CTb$ around the vertices
$\ba$ and $\bb$ and covering the domains $\oma$ and $\omb$
are said to be equivalent if there exists a bilipschitz mapping
$\psi: \oma \to \omb$ such that $\psi|_K$ is affine and $\psi(K) \in \CTb$
for all $K \in \CTa$. Note that $\psi$ necessarily preserves the
topology/connectivity, i.e., $\bb = \psi(\ba)$, if a (boundary) face $F \in \CFa$
shares $\ba$, then $\psi(F) \in \CFb$ is a (boundary) face that shares $\bb$,
and if $K,L \in \CTa$ are neighbors over a face $F$, then $\psi(K), \psi(L) \in \CTb$
are neighbors over the face $\psi(F)$.
\end{definition}

The stability constants of equivalent patches are tightly linked together.
Actually, they simply differ up to a factor depending only on the shape
regularity parameter of the two patches.

\begin{lemma}[Equivalent patches]
\label{lemma_equiv}
If $\CTa$ and $\CTb$ are equivalent patches in the sense of Definition~\ref{definition_equiv},
then, for all $p \geq 0$,
\begin{equation}
\label{eq_stab_equiv}
C_{{\rm st},p,\CTa,\Gamma}
\leq
C(\kappa_{\CTa},\kappa_{\CTb})
C_{{\rm st},p,\CTb,\widetilde \Gamma},
\end{equation}
where $\widetilde \Gamma \eq \psi(\Gamma)$ and $\psi$ is the bilipschitz mapping of
Definition~\ref{definition_equiv}.
\end{lemma}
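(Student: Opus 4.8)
The plan is to transport both the discrete and continuous constrained minimization problems defining $C_{{\rm st},p,\CTa,\Gamma}$ across the bilipschitz mapping $\psi$ using the covariant (curl-conforming) Piola transform, so that admissible fields for the patch $\CTa$ are mapped to admissible fields for the patch $\CTb$ and vice versa, with $L^2$-norms distorted only by factors controlled by the Jacobians of $\psi$ on each tetrahedron. Since $\psi|_K$ is affine for every $K \in \CTa$ and $\psi(K) \in \CTb$, the elementwise covariant Piola transform $\bchi \mapsto \psi^{\rm c}(\bchi)$ (using on each element the affine map $\psi|_K : K \to \psi(K)$) maps $\ND_p(\CTa)$ onto $\ND_p(\CTb)$ and, by~\eqref{eq_piola_commute}, intertwines $\curl$ with the contravariant Piola transform $\psi^{\rm d}$, which maps $\RT_p(\CTa)$ onto $\RT_p(\CTb)$. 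Because $\psi$ preserves the connectivity (Definition~\ref{definition_equiv}), tangential-trace continuity across interior faces is preserved, so the covariant Piola image of a field in $\ND_p(\CTa)\cap\BH_{0,\Gamma}(\ccurl,\oma)$ lies in $\ND_p(\CTb)\cap\BH_{0,\widetilde\Gamma}(\ccurl,\omb)$, and similarly $\psi^{\rm d}$ maps $\RT_p(\CTa)\cap\BH_{0,\Gamma}(\ddiv,\oma)$ with vanishing divergence into $\RT_p(\CTb)\cap\BH_{0,\widetilde\Gamma}(\ddiv,\omb)$ with vanishing divergence (divergence-free fields are preserved since $\div(\psi^{\rm d}\bw) = (\det\JAC)^{-1}(\div\bw)\circ\psi^{-1}$ elementwise, and zero normal jumps are preserved by connectivity).

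First I would fix arbitrary data $\bchi_p \in \ND_p(\CTa)$ and $\bj_p \in \RT_p(\CTa)\cap\BH_{0,\Gamma}(\ddiv,\oma)$ with $\div\bj_p=0$, set $\widetilde\bchi_p \eq \psi^{\rm c}(\bchi_p)$ and $\widetilde\bj_p \eq \psi^{\rm d}(\bj_p)$, and observe these are admissible data for the patch $\CTb$ with boundary part $\widetilde\Gamma$. Next I would establish the one-to-one correspondence $\bv \leftrightarrow \psi^{\rm c}(\bv)$ between the (affine) constraint sets $\{\bv \in \BH_{0,\Gamma}(\ccurl,\oma) : \curl\bv = \bj_p\}$ and $\{\widetilde\bv \in \BH_{0,\widetilde\Gamma}(\ccurl,\omb) : \curl\widetilde\bv = \widetilde\bj_p\}$, and the analogous correspondence at the discrete level using $\ND_p$; here the subtle point is that the weak tangential-trace (boundary) condition is preserved, which follows from the weak characterization in~\eqref{eq_weak_trace}--type duality together with the covariant nature of $\psi^{\rm c}$, exactly as exploited in Steps~3b--3c of the interior-patch proof. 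Then the elementwise Piola stability estimate~\eqref{eq_piola_stable}, applied on each $K \in \CTa$ (the constant on $K$ depending only on $\kappa_K$, $\kappa_{\psi(K)}$, hence only on $\kappa_{\CTa},\kappa_{\CTb}$), gives two-sided bounds $\|\psi^{\rm c}(\bv) - \widetilde\bchi_p\|_{\omb} = \|\psi^{\rm c}(\bv-\bchi_p)\|_{\omb} \leq C(\kappa_{\CTa},\kappa_{\CTb})\|\bv-\bchi_p\|_{\oma}$, and conversely with $\psi^{-1}$. Combining these, the discrete minimizer $\bv_p^\star$ for $(\CTa,\Gamma)$ maps to an admissible discrete competitor $\psi^{\rm c}(\bv_p^\star)$ for $(\CTb,\widetilde\Gamma)$ whose distance to $\widetilde\bchi_p$ is at most $C\|\bv_p^\star-\bchi_p\|_{\oma}$, while the continuous minimizer $\widetilde\bv^\star$ for $(\CTb,\widetilde\Gamma)$ pulls back under $\psi^{-1}$ to an admissible continuous competitor for $(\CTa,\Gamma)$; chaining
\[
\|\psi^{\rm c}(\bv_p^\star)-\widetilde\bchi_p\|_{\omb}
\leq C(\kappa_{\CTa},\kappa_{\CTb})\,\|\bv_p^\star-\bchi_p\|_{\oma}
\leq C(\kappa_{\CTa},\kappa_{\CTb})\,C_{{\rm st},p,\CTa,\Gamma}\,\|\bv^\star-\bchi_p\|_{\oma}
\]
is the wrong direction, so instead I would argue that $C_{{\rm st},p,\CTb,\widetilde\Gamma}$ controls $C_{{\rm st},p,\CTa,\Gamma}$: given data on $\CTa$, transport to $\CTb$, use that the $\CTb$-discrete minimum is at most $C_{{\rm st},p,\CTb,\widetilde\Gamma}$ times the $\CTb$-continuous minimum, then transport back — each transport costing a factor $C(\kappa_{\CTa},\kappa_{\CTb})$ — and note that the transported discrete minimizer is an admissible (not necessarily optimal) competitor on $\CTa$, which only helps. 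Taking the supremum over data as in~\eqref{eq_stab} yields~\eqref{eq_stab_equiv}.

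The main obstacle I anticipate is the careful verification that the boundary (tangential-trace) condition and, for the flux, the normal-trace/divergence-free condition are genuinely preserved under the Piola transforms at the level of the full Sobolev spaces $\BH_{0,\Gamma}(\ccurl,\oma)$ and $\BH_{0,\Gamma}(\ddiv,\oma)$ — not merely for smooth or piecewise-polynomial fields. This requires invoking the duality/weak-trace characterizations recalled in Section~\ref{sec_fct_sp} and Section~\ref{sec_tang_traces} and checking that $\psi$, being affine on each element and connectivity-preserving, maps the relevant test-function spaces $\BH^1_{0,\partial\oma\setminus\overline\Gamma}(\oma)$ bijectively onto their $\omb$-counterparts with controlled norms; the identity~\eqref{eq_piola_v_curlw} and its $\BH(\ddiv)$ analogue are the workhorses here, used essentially as in Steps~3b--3d of Section~\ref{section_interior_patches}. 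Everything else is a bookkeeping exercise with elementwise affine changes of variables and the uniform Piola bound~\eqref{eq_piola_stable}.
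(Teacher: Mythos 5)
Your proposal is correct and follows essentially the same route as the paper: transport the data $\bchi_p,\bj_p$ to $\CTb$ via the piecewise covariant/contravariant Piola maps, invoke the stability constant $C_{{\rm st},p,\CTb,\widetilde\Gamma}$ there, and transport back, using that the pushed-forward continuous minimizer and the pulled-back discrete minimizer are admissible competitors, with each transport costing only $\enorm{\psi^{\rm c}}\enorm{(\psi^{\rm c})^{-1}}\leq C(\kappa_{\CTa},\kappa_{\CTb})$. The self-correction about the direction of the chain lands on exactly the paper's argument, and your extra care about weak-trace preservation is a point the paper handles by appeal to standard Piola-mapping properties.
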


\begin{proof}
Fix a polynomial degree $p \geq 0$. Consider data
$\bj_p \in \RT_p(\CTa) \cap \BH_{0,\Gamma}(\ddiv,\oma)$
with $\div \bj_p = 0$ and $\bchi_p \in \ND_p(\CTa)$. We define
$\tbj_p \eq \psi^{\rm d}(\bj_p)$ and $\tbchi_p \eq \psi^{\rm c}(\bchi_p)$,
where $\psi^{\rm d}$ and $\psi^{\rm c}$ are the Piola mappings from~\eqref{eq_Piola}.
Because the mapping $\psi$ is Lipschitz and piecewise affine, we have
$\tbj_p \in \RT_p(\CTb) \cap \BH_{0,\widetilde \Gamma}(\ddiv,\omb)$
and $\tbchi_p \in \ND_p(\CTb)$. As a result, if we denote
by $\tbv^\star$ and $\tbv_p^\star$ the continuous and discrete
minimizers on $\widetilde \CT_{\bb}$ with data $\tbj_p$ and $\tbchi_p$,
we have
\begin{equation*}
\|\tbv_p^\star-\tbchi_p\|_{\omb}
\leq
C_{{\rm st},p,\CTb,\widetilde \Gamma}
\|\tbv^\star-\tbchi_p\|_{\omb}.
\end{equation*}
Then, letting $\enorm{{\cdot}}$ be the usual operator norm from $\BL^2(\oma)$
to $\BL^2(\omb)$ (or vice-versa), we have, on the one hand, since $\tbv^\star$
is the minimizer and $\psi^{\rm c}(\bv^\star) \in \BH_{0,\widetilde \Gamma}(\ccurl,\omb)$
with $\curl (\psi^{\rm c}(\bv^\star)) =\tbj_p$ that
\begin{equation*}
\|\tbv^\star-\tbchi_p\|_{\omb}
\leq
\|\psi^{\rm c}(\bv^\star)-\tbchi_p\|_{\omb}
=
\|\psi^{\rm c}(\bv^\star-\bchi_p)\|_{\omb}
\leq
\enorm{\psi^{\rm c}}\|\bv^\star-\bchi_p\|_{\oma},
\end{equation*}
and, on the other hand, that
\begin{equation*}
\|\bv_p^\star-\bchi_p\|_{\oma}
\leq
\|(\psi^{\rm c})^{-1}(\tbv_p^\star-\tbchi_p)\|_{\oma}
\leq
\enorm{(\psi^{\rm c})^{-1}}\|\tbv_p^\star-\tbchi_p\|_{\omb}.
\end{equation*}
It follows that
\begin{equation*}
\|\bv_p^\star-\bchi_p\|_{\oma}
\leq
C_{{\rm st},p,\CTb,\widetilde \Gamma}
\enorm{\psi^{\rm c}}
\enorm{(\psi^{\rm c})^{-1}}
\|\bv^\star-\bchi_p\|_{\oma}.
\end{equation*}
Since the data was arbitrary, \eqref{eq_stab_equiv} follows from the estimate
\begin{equation*}
\enorm{\psi^{\rm c}} \enorm{(\psi^{\rm c})^{-1}}
\leq
\bar \kappa_{\CTa}^4 \bar \kappa_{\CTb}^4
\end{equation*}
with
\begin{equation*}
\bar\kappa_{\CTa} \eq \frac{\max_{K \in \CTa} h_K}{\min_{K \in \CTa}\rho_K}, \quad
\bar\kappa_{\CTb} \eq \frac{\max_{K \in \CTb} h_K}{\min_{K \in \CTb}\rho_K}
\end{equation*}
that may be easily obtained from standard estimates on the Jacobian
matrices $\JAC$ defining the affine mappings (see, e.g., \cite[Theorem 3.1.2]{ciarlet_2002a}).
The conclusion then follows since $\bar\kappa_{\CTa} \leq C(\kappa_{\CTa})$
and $\bar\kappa_{\widetilde \CT_{\bb}} \leq C(\kappa_{\widetilde \CT_{\bb}})$.
\end{proof}

\subsection{Extensions of patches}

We will next need the concept of ``patch extension''. Specifically, if a patch
$\CTa$ can be extended into another patch $\tCTa \supset \CTa$ in a suitable
way, then the stability constant~$C_{{\rm st},p,\CTa,\Gamma}$ of $\CTa$
given by~\eqref{eq_stab} will be controlled by that of $\tCTa$. Here, $\CTa$
will typically be a boundary patch and $\tCTa$ either boundary or interior.
The precise definition is as follows.

\begin{definition}[Patch extension]
\label{definition_extension}
Consider two patches $\CTa$ and $\tCTa$ around the same vertex $\ba$,
with associated domains $\oma$ and $\toma$. We say that $\tCTa$ is an extension of $\CTa$ if
the following holds:
\begin{enumerate}
\item $\CTa \subset \tCTa$.
\item
There exist extension operators
$\LE^{\rm c},\LE^{\rm d}: \BL^2(\oma) \to \BL^2(\toma)$
such that
\begin{enumerate}
\item \label{it_Ec_a}
$\LE^{\rm c}(\bv)|_{\oma} = \LE^{\rm d}(\bv)|_{\oma} = \bv$ for all $\bv \in \BL^2(\oma)$;
\item \label{it_Ec_b}
$\LE^{\rm c}: \BH_{0,\Gamma}(\ccurl,\oma) \to \BH_{0,\tG}(\ccurl,\toma)$
and
$\LE^{\rm d}: \BH_{0,\Gamma}(\ddiv,\oma) \to \BH_{0,\tG}(\ddiv,\toma)$;
\item \label{it_Ec_c}
$\LE^{\rm c}: \ND_q(\CTa) \to \ND_q(\tCTa)$
and
$\LE^{\rm d}: \RT_q(\CTa) \to \RT_q(\tCTa)$;
\item \label{it_Ec_d}
$\curl (\LE^{\rm c}(\bv)) = \LE^{\rm d}(\curl \bv)$ for all $\bv \in \BH_{0,\Gamma}(\ccurl,\oma)$.
\end{enumerate}
\item
There exist restriction
operators $\LR^{\rm c},\LR^{\rm d}: \BL^2(\toma) \to \BL^2(\oma)$ such that
\begin{enumerate}
\item \label{it_Rd_a}
$(\LR^{\rm c} \circ \LE^{\rm c})(\bv) = (\LR^{\rm d} \circ \LE^{\rm d})(\bv) = \bv$
for all $\bv \in \BL^2(\oma)$;
\item \label{it_Rd_b}
$\LR^{\rm c}: \BH_{0,\tG}(\ccurl,\toma) \to \BH_{0,\Gamma}(\ccurl,\oma)$
and
$\LR^{\rm d}: \BH_{0,\tG}(\ddiv,\toma) \to \BH_{0,\Gamma}(\ddiv,\oma)$;
\item \label{it_Rd_c}
$\LR^{\rm c}: \ND_q(\tCTa) \to \ND_q(\CTa)$
and
$\LR^{\rm d}: \RT_q(\tCTa) \to \RT_q(\CTa)$;
\item \label{it_Rd_RC_com}
$\curl (\LR^{\rm c}(\tbv)) = \LR^{\rm d}(\curl \tbv)$ for all $\tbv \in \BH_{0,\tG}(\ccurl,\toma)$.
\end{enumerate}
\end{enumerate}
\end{definition}

As we state below, extensions can be composed, so that it is possible to extend an initial
patch several times.

\begin{lemma}[Composition of extensions] \label{lem_comp}
If, in the sense of Definition~\ref{definition_extension},
$\tCTa^1$ is an extension of $\CTa$
with operators $\LE_1^{\rm c}$ and $\LR_1^{\rm c}$
and $\tCTa^2$ is an extension of $\tCTa^1$
with operators $\LE_{1,2}^{\rm c}$ and $\LR_{1,2}^{\rm c}$,
then $\tCTa^2$ is an extension of $\CTa$
with operators $\LE_2^{\rm c} \eq \LE_{1,2}^{\rm c} \circ \LE_1^{\rm c}$
and $\LR_2^{\rm c} \eq \LR_1^{\rm c} \circ \LR_{1,2}^{\rm c}$, and corresponding definitions
for the operators $\LE^{\rm d}$ and $\LR^{\rm d}$.
\end{lemma}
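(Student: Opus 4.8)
The plan is to check, item by item, that the composed operators $\LE_2^{\rm c} \eq \LE_{1,2}^{\rm c} \circ \LE_1^{\rm c}$ and $\LR_2^{\rm c} \eq \LR_1^{\rm c} \circ \LR_{1,2}^{\rm c}$ (together with the analogously defined $\LE_2^{\rm d}$ and $\LR_2^{\rm d}$) satisfy every requirement of Definition~\ref{definition_extension} relative to the pair $\CTa \subset \tCTa^2$. Write $\toma^1$, $\toma^2$ for the domains of $\tCTa^1$, $\tCTa^2$ and $\widetilde\Gamma^1$, $\widetilde\Gamma^2$ for the associated boundary portions appearing in the definition. Property~(1) is immediate, since $\CTa \subset \tCTa^1 \subset \tCTa^2$.

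For property~(2): identity~\ref{it_Ec_a} follows by restricting twice --- for $\bv \in \BL^2(\oma)$ one has $\LE_2^{\rm c}(\bv)|_{\toma^1} = \LE_1^{\rm c}(\bv)$ by~\ref{it_Ec_a} for the second extension (applied to $\LE_1^{\rm c}(\bv) \in \BL^2(\toma^1)$), and then $\LE_1^{\rm c}(\bv)|_{\oma} = \bv$ by~\ref{it_Ec_a} for the first one; the same chain applies to $\LE_2^{\rm d}$. The mapping properties~\ref{it_Ec_b} and~\ref{it_Ec_c} are obtained by composing the corresponding maps of the two extensions, e.g. $\BH_{0,\Gamma}(\ccurl,\oma) \to \BH_{0,\widetilde\Gamma^1}(\ccurl,\toma^1) \to \BH_{0,\widetilde\Gamma^2}(\ccurl,\toma^2)$, and the analogous chains for $\ND_q$, $\RT_q$ and the $\BH(\ddiv)$ spaces. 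The commutation~\ref{it_Ec_d} is then verified on $\bv \in \BH_{0,\Gamma}(\ccurl,\oma)$ by $\curl(\LE_2^{\rm c}(\bv)) = \LE_{1,2}^{\rm d}(\curl(\LE_1^{\rm c}(\bv))) = \LE_{1,2}^{\rm d}(\LE_1^{\rm d}(\curl\bv)) = \LE_2^{\rm d}(\curl\bv)$, where applying~\ref{it_Ec_d} for the second extension is legitimate because $\LE_1^{\rm c}(\bv) \in \BH_{0,\widetilde\Gamma^1}(\ccurl,\toma^1)$ by~\ref{it_Ec_b}.

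Property~(3) is entirely symmetric, the only subtlety being that the restriction operators compose in the \emph{reverse} order: $\LR_2^{\rm c}$ first pulls back from $\toma^2$ to $\toma^1$ and then from $\toma^1$ to $\oma$. With this ordering, $(\LR_2^{\rm c} \circ \LE_2^{\rm c})(\bv) = \LR_1^{\rm c}\big(\LR_{1,2}^{\rm c}(\LE_{1,2}^{\rm c}(\LE_1^{\rm c}(\bv)))\big) = \LR_1^{\rm c}(\LE_1^{\rm c}(\bv)) = \bv$ for $\bv \in \BL^2(\oma)$, which is~\ref{it_Rd_a}; the mapping properties~\ref{it_Rd_b}--\ref{it_Rd_c} and the commutation~\ref{it_Rd_RC_com} are checked exactly as in the extension case, again using the intermediate membership given by~\ref{it_Rd_b} for the second extension so that~\ref{it_Rd_RC_com} can be invoked. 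Since everything reduces to routine bookkeeping, the only place that needs attention --- and hence the main obstacle --- is keeping track of the composition order for the restriction operators and confirming that each intermediate object lies in the space required to apply the next defining property; there is no analytic content beyond that.
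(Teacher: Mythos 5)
Your verification is correct; the paper in fact states Lemma~\ref{lem_comp} without proof, treating it as the routine bookkeeping you describe, and your item-by-item check (including the reversed composition order for the restriction operators and the intermediate memberships needed to invoke the commutation properties~\ref{it_Ec_d} and~\ref{it_Rd_RC_com}) supplies exactly the omitted argument.
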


Crucially, if we can prove the stability of discrete minimization in
an extension of a given patch, then it also holds on the original patch.
Indeed, we have the following inequality with the constant that only
depends on the norms of the extension and restriction operators of
Definition~\ref{definition_extension}.

\begin{lemma}[Patch extensions]
\label{lemma_extension}
Consider a vertex patch $\CTa$ and an extension $\tCTa \supset \CTa$
in the sense of Definition~\ref{definition_extension}. Then, for all $p \geq 0$, we have
\begin{equation*}
C_{{\rm st},p,\CTa,\Gamma}
\leq
\enorm{\LE^{\rm c}}\enorm{\LR^{\rm c}}
C_{{\rm st},p,\tCTa,\tG}.
\end{equation*}
\end{lemma}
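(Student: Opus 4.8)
The plan is to transfer data and minimizers between $\CTa$ and $\tCTa$ using the extension and restriction operators from Definition~\ref{definition_extension}, exploiting that they are mutually inverse on $\oma$ and commute with $\curl$. Fix $p \geq 0$ and pick arbitrary data $\bj_p \in \RT_p(\CTa) \cap \BH_{0,\Gamma}(\ddiv,\oma)$ with $\div \bj_p = 0$ and $\bchi_p \in \ND_p(\CTa)$, and let $\bv_p^\star$, $\bv^\star$ be the corresponding discrete and continuous minimizers on $\CTa$ as in~\eqref{eq_minimizers}. I would set $\tbj_p \eq \LE^{\rm d}(\bj_p)$ and $\tbchi_p \eq \LE^{\rm c}(\bchi_p)$. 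By properties~\ref{it_Ec_c} and~\ref{it_Ec_b}, $\tbj_p \in \RT_p(\tCTa) \cap \BH_{0,\tG}(\ddiv,\toma)$, and by~\ref{it_Ec_d} applied to any field with curl $\bj_p$ (e.g. $\bchi_p$), $\div \tbj_p = 0$; also $\tbchi_p \in \ND_p(\tCTa)$. Hence $\tbj_p$, $\tbchi_p$ are admissible data on $\tCTa$, and we may invoke $C_{{\rm st},p,\tCTa,\tG}$: denoting by $\tbv_p^\star$ and $\tbv^\star$ the associated discrete and continuous minimizers on $\tCTa$, we have $\|\tbv_p^\star - \tbchi_p\|_{\toma} \leq C_{{\rm st},p,\tCTa,\tG} \|\tbv^\star - \tbchi_p\|_{\toma}$.

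Next I would bound the right-hand side from above and the left-hand side from below, crossing back and forth through the operators. For the upper bound: $\LE^{\rm c}(\bv^\star)$ lies in $\BH_{0,\tG}(\ccurl,\toma)$ by~\ref{it_Ec_b} and satisfies $\curl(\LE^{\rm c}(\bv^\star)) = \LE^{\rm d}(\curl \bv^\star) = \LE^{\rm d}(\bj_p) = \tbj_p$ by~\ref{it_Ec_d}, so it is a competitor in the continuous problem on $\tCTa$; since $\LE^{\rm c}(\bchi_p) = \tbchi_p$ and $\LE^{\rm c}$ is linear,
\[
\|\tbv^\star - \tbchi_p\|_{\toma} \leq \|\LE^{\rm c}(\bv^\star) - \tbchi_p\|_{\toma} = \|\LE^{\rm c}(\bv^\star - \bchi_p)\|_{\toma} \leq \enorm{\LE^{\rm c}}\|\bv^\star - \bchi_p\|_{\oma}.
\]
For the lower bound: $\LR^{\rm c}(\tbv_p^\star)$ lies in $\BH_{0,\Gamma}(\ccurl,\oma) \cap \ND_p(\CTa)$ by~\ref{it_Rd_b}, \ref{it_Rd_c}, and $\curl(\LR^{\rm c}(\tbv_p^\star)) = \LR^{\rm d}(\curl \tbv_p^\star) = \LR^{\rm d}(\tbj_p) = \bj_p$ by~\ref{it_Rd_RC_com} and~\ref{it_Rd_a}, so it is a competitor in the discrete problem on $\CTa$; using $\LR^{\rm c}(\tbchi_p) = (\LR^{\rm c}\circ\LE^{\rm c})(\bchi_p) = \bchi_p$ from~\ref{it_Rd_a},
\[
\|\bv_p^\star - \bchi_p\|_{\oma} \leq \|\LR^{\rm c}(\tbv_p^\star) - \bchi_p\|_{\oma} = \|\LR^{\rm c}(\tbv_p^\star - \tbchi_p)\|_{\oma} \leq \enorm{\LR^{\rm c}}\|\tbv_p^\star - \tbchi_p\|_{\toma}.
\]

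Chaining these three inequalities gives $\|\bv_p^\star - \bchi_p\|_{\oma} \leq \enorm{\LR^{\rm c}} C_{{\rm st},p,\tCTa,\tG} \enorm{\LE^{\rm c}} \|\bv^\star - \bchi_p\|_{\oma}$; since the data were arbitrary, taking the supremum in~\eqref{eq_stab} yields the claim. The argument is essentially bookkeeping: the only things to be careful about are that each transported field is genuinely \emph{admissible} (correct space, correct curl constraint, divergence-free where needed) and that the $\bchi_p$ shift is respected ($\LE^{\rm c}$ and $\LR^{\rm c}$ map the prescribed $\bchi_p$ consistently, which is where~\ref{it_Ec_a} and~\ref{it_Rd_a} enter). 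There is no real obstacle here — the genuinely hard work is pushed into \emph{constructing} such extension operators (done later in Section~\ref{section_boundary_patches}) and into the interior-patch case; this lemma is the easy glue. The mild subtlety worth a sentence in the write-up is verifying $\div \tbj_p = 0$, which one gets for free from the commuting property~\ref{it_Ec_d} rather than needing a separate hypothesis on $\LE^{\rm d}$.
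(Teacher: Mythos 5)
Your proposal is correct and follows essentially the same route as the paper's proof: push the data forward with $\LE^{\rm d}$, $\LE^{\rm c}$, invoke stability on the extended patch, and sandwich via the two competitors $\LE^{\rm c}(\bv^\star)$ (continuous problem on $\tCTa$) and $\LR^{\rm c}(\tbv_p^\star)$ (discrete problem on $\CTa$), using $\LR^{\rm c}\circ\LE^{\rm c}=\operatorname{Id}$ to handle the $\bchi_p$ shift. The one slip is the parenthetical justification of $\div\tbj_p=0$: you cannot apply the commuting property~\ref{it_Ec_d} to $\bchi_p$, since $\bchi_p\in\ND_p(\CTa)$ is arbitrary and in general satisfies neither $\curl\bchi_p=\bj_p$ nor the boundary condition; the correct witness is $\bv^\star$ (or any element of the nonempty continuous minimization set), for which $\LE^{\rm d}(\bj_p)=\LE^{\rm d}(\curl\bv^\star)=\curl\bigl(\LE^{\rm c}(\bv^\star)\bigr)$ is indeed divergence-free.
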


\begin{proof}
Let $\bj_p \in \RT_p(\CTa) \cap \BH_{0,\Gamma}(\ddiv,\oma)$ with $\div \bj_p  = 0$
and $\bchi_p \in \ND_p(\CTa)$. Recall the discrete and continuous minimizers
$\bv_p^\star$ and $\bv^\star$ from~\eqref{eq_minimizers}.

We start by introducing $\tbj_p \eq \LE^{\rm d}(\bj_p) \in \RT_p(\tCTa) \cap \BH_{0,\tG}(\ddiv,\toma)$
and $\tbchi_p \eq \LE^{\rm c}(\bchi_p) \in \ND_p(\tCTa)$.
Due to the commuting properties in Definition~\ref{definition_extension}, we have
$\div \tbj_p  = 0$, so that we can consider the curl-constrained minimization in
the extended patch $\tCTa$ with data $\tbj_p$ and $\tbchi_p$, with essential boundary
conditions on $\tG$. Henceforth, we denote by $\tbv^\star$ and $\tbv_p^\star$ the
associated continuous and discrete minimizers associated with these data in the extended
patch $\tCTa$.

The proof then follows from the following considerations. First,
\begin{align*}
\|\bv_p^\star-\bchi_p\|_{\oma}
\leq
\|\LR^{\rm c}(\tbv_p^\star)-\bchi_p\|_{\oma}
=
\|\LR^{\rm c}(\tbv_p^\star-\tbchi_p)\|_{\oma}
\leq
\enorm{\LR^{\rm c}} \|\tbv_p^\star-\tbchi_p\|_{\toma},
\end{align*}
where we used that $\LR^{\rm c}(\tbv_p^\star)$ is in the discrete minimization set
of the original patch due to our assumptions on $\LR^{\rm c}$ and $\LR^{\rm d}$
in the first inequality and the fact that $\LR^{\rm c}(\tbchi_p) = (\LR^{\rm c} \circ \LE^{\rm c})(\bchi_p) = \bchi_p$
in the equality. Second, we use the stable minimization property in the extended patch, giving
\begin{equation*}
\|\tbv_p^\star-\tbchi_p\|_{\toma}
\leq
C_{{\rm st},p,\tCTa,\widetilde \Gamma}
\|\tbv^\star-\tbchi_p\|_{\toma}.
\end{equation*}
Finally, since $\LE^{\rm c}(\bv^\star)$ is in the continuous minimization of the
extended patch, we conclude the proof with
\begin{equation*}
\|\tbv^\star-\tbchi_p\|_{\toma}
\leq
\|\LE^{\rm c}(\bv^\star)-\tbchi_p\|_{\toma}
=
\|\LE^{\rm c}(\bv^\star-\bchi_p)\|_{\toma}
\leq
\enorm{\LE^{\rm c}} \|\bv^\star-\bchi_p\|_{\oma}.
\end{equation*}
\end{proof}

\subsection{Parachute patches} \label{sec_parachute}

The next central concept is the one of a ``parachute patch'' where
all the vertices except the central vertex lie in the same plane.
As a result, the faces not sharing the central vertex are easily identified
with a two-dimensional planar triangular mesh, which makes the reasoning
easier. An illustration is given in Figure~\ref{fig_patches_par_tetr}, left.

\begin{definition}[Parachute patch]
\label{definition_parachute}
A parachute patch $\CTo$ is a boundary patch around the vertex $\bzero \in \mathbb R^3$
with associated domain $\omo$ such that all the non-central vertices of the patch $\bb \neq \bzero$ lie in the plane $H \eq \{\bx \in \mathbb R^3 \; | \; \bx_3 = 1\}$. In this case, we denote
by $\lceil \CTo \rceil$ the planar triangular mesh induced by $\CTo$ on $H$ and by
$\lceil \omo \rceil \subset \mathbb R^2$ the corresponding two-dimensional planar domain.
\end{definition}

\begin{figure}
\centerline{\raisebox{0.5cm}{\includegraphics[width=0.55\linewidth]{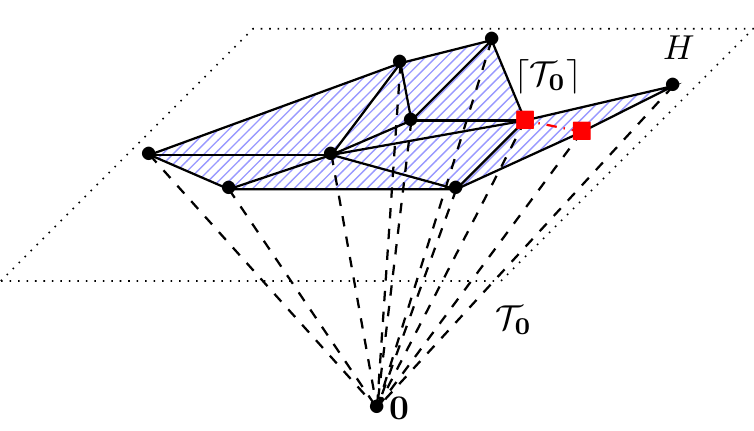}} \includegraphics[width=0.44\linewidth]{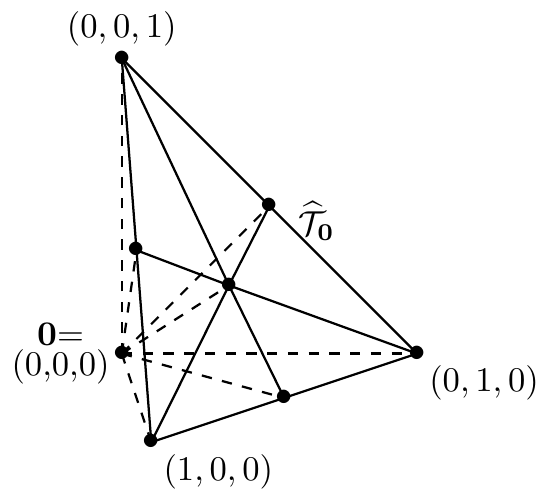}}
\caption{Parachute patch (left) and a reference tetrahedron patch (right)}
\label{fig_patches_par_tetr}
\end{figure}

Crucially, every boundary patch is equivalent to a ``reference'' parachute patch,
as we next demonstrate.

\begin{lemma}[Reference parachute patches]
\label{lemma_reference_parachutes}
Consider a shape-regularity parameter $\kappa > 0$. There exists a finite set
of reference parachute patches $\widehat \LT_\kappa = \{\hCTo\}$
such that if $\CTa$ is a boundary patch with shape-regularity parameter
$\kappa_{\CTa} \leq \kappa$, then there exists exactly one
$\hCTo \in \widehat \LT_\kappa$ that is equivalent to $\CTa$ in the sense of
Definition~\ref{definition_equiv}.
\end{lemma}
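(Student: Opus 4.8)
The plan is to classify boundary patches up to combinatorial (simplicial) type, of which there are only finitely many once the shape-regularity bound $\kappa$ is fixed, and to realize each type by a parachute patch obtained from a planar straight-line embedding of the link of $\ba$ furnished by Tutte's theorem.

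First I would bound the number of tetrahedra: if $\kappa_{\CTa}\le\kappa$, each $K\in\CTa$ subtends at $\ba$ a solid angle bounded below by some $\omega_0(\kappa)>0$, a standard consequence of shape regularity; since the tetrahedra of $\CTa$ have pairwise disjoint interiors and all contain $\ba$, these angles sum to at most $4\pi$, whence $|\CTa|\le N(\kappa)\eq 4\pi/\omega_0(\kappa)$. Therefore only finitely many simplicial complexes, up to isomorphism, arise as such patches. Recording a vertex patch combinatorially is the same as recording its link at $\ba$, because each $K$ is the cone from $\ba$ over its face opposite $\ba$, so the patch is the cone over its link; moreover, under the hypotheses of Section~\ref{sec_VP} — clean simplicial intersections, face-connectedness, Lipschitz boundary, $\overline{\oma}$ homotopic to a ball, and $\ba\in\partial\oma$ — this link is a triangulated topological disk whose boundary cycle corresponds to the faces of $\CTa$ that share $\ba$ and lie on $\partial\oma$. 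Let $\LG_\kappa$ be the finite set of isomorphism types of links of boundary patches having at most $N(\kappa)$ tetrahedra.

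Next I would realize each type. Fix $g\in\LG_\kappa$; by construction $g$ is the link of some boundary patch, hence a triangulated disk. Choose a strictly convex polygon in the plane $H=\{\bx_3=1\}$ whose vertices, in cyclic order, correspond to the boundary vertices of $g$, and invoke Tutte's embedding theorem~\cite{tutte_1963a} — in the version valid for triangulations of a disk with prescribed convex boundary, using uniform barycentric weights — to position the interior vertices so that the resulting straight-line drawing is a genuine planar triangulation of the polygon, of combinatorial type $g$, with every triangle non-degenerate and no two overlapping. Coning this triangulation from $\bzero$ yields tetrahedra $\operatorname{conv}(\bzero,v_i,v_j,v_k)$ with pairwise disjoint interiors whose union is the convex pyramid over the polygon; this is a boundary patch around $\bzero$ with all non-central vertices in $H$, i.e., a parachute patch $\hCTo$ (depending on $g$) whose link at $\bzero$ is combinatorially $g$. (Its $\Ga$ may be fixed arbitrarily, say $\Ga=\emptyset$, since equivalence in Definition~\ref{definition_equiv} does not refer to it and Lemma~\ref{lemma_equiv} only uses the push-forward $\psi(\Gamma)$.) Set $\widehat\LT_\kappa\eq\{\hCTo : g\in\LG_\kappa\}$, a finite set of parachute patches.

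Finally, given a boundary patch $\CTa$ with $\kappa_{\CTa}\le\kappa$, its link has some type $g\in\LG_\kappa$, so $\CTa$ and the corresponding $\hCTo\in\widehat\LT_\kappa$ carry isomorphic links and hence are isomorphic as simplicial patches, via an isomorphism $\phi$ sending $\ba$ to $\bzero$, preserving adjacency and the boundary faces at the central vertex. Extending $\phi$ affinely on each tetrahedron defines $\psi\colon\overline{\oma}\to\overline{\widehat\omega_{\bzero}}$; it is continuous because the affine pieces agree on shared faces, injective because $\psi(x)=\psi(y)$ with $x\in K$, $y\in K'$ gives $\psi(x)\in\phi(K)\cap\phi(K')=\phi(K\cap K')$ and forces $x,y$ into the common face $K\cap K'$ on which $\psi$ is an affine isomorphism, and surjective because $\psi(\overline{\oma})=\bigcup_{K\in\CTa}\phi(K)=\overline{\widehat\omega_{\bzero}}$. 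By compactness $\psi$ is a homeomorphism, and being a homeomorphism between closures of Lipschitz domains it carries interior onto interior, so $\psi(\oma)=\widehat\omega_{\bzero}$; finally each $\psi|_K$ is affine with Jacobian bounded above and below in terms of $\kappa_{\CTa}\le\kappa$ and $\max_{\hCTo\in\widehat\LT_\kappa}\kappa_{\hCTo}\le C(\kappa)$, so $\psi$ is bilipschitz and $\CTa$ is equivalent to $\hCTo$ in the sense of Definition~\ref{definition_equiv}. Uniqueness is immediate: equivalence of patches is transitive and preserves the isomorphism type of the link, so $\CTa$ is equivalent to no other member of $\widehat\LT_\kappa$. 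I expect the realization step to be the main obstacle — one must ensure the planar drawing of the link is a genuine triangulation with no flipped or overlapping triangles, which is precisely what Tutte's theorem provides, and then check that coning it produces a patch satisfying all the requirements of Section~\ref{sec_VP}; by comparison, the cardinality bound and the equivalence argument are routine.
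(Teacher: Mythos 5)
Your proposal is correct and follows the same overall strategy as the paper: bound the number of tetrahedra by a constant $N(\kappa)$, classify boundary patches by the combinatorial type of the triangulated disk formed by the faces opposite $\ba$ (the link), build one reference parachute patch per type, and transport the equivalence through piecewise affine maps. The difference lies in how the reference parachutes are realized. The paper flattens the given patch itself --- translate $\ba$ to $\bzero$, dilate the opening, rotate, and shorten the edges emanating from the central vertex until all non-central vertices land in the plane $H$ --- and then matches the resulting planar mesh $\lceil \CTo \rceil$ against a pre-built catalogue of conforming planar triangulations with at most $N(\kappa)$ elements; Tutte's theorem is reserved for Appendix~\ref{section_triangular_representation}, where triconnectivity is first established under the extra hypothesis that no interior edge has both endpoints on the boundary. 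You instead realize each abstract link type directly by a Tutte-type barycentric embedding with prescribed convex boundary and then cone from $\bzero$. This is arguably cleaner, since it never has to argue that the geometric deformation preserves the mesh structure, but it carries one caveat: the link of a boundary patch need not be triconnected (precisely because of the ``problematic'' chords illustrated in Figure~\ref{figure_counterexample}), so Tutte's theorem in its classical form does not apply; you would need Floater's extension to arbitrary triangulated disks with convex boundary, or, more simply, you can observe that every type in your $\LG_\kappa$ is by definition the link of an actual patch and is therefore realizable as a planar straight-line triangulation by flattening that patch, which makes the Tutte invocation unnecessary. With that repair, your cardinality bound, the coning construction, the piecewise affine equivalence map (including the bilipschitz bounds in terms of $\kappa$), and the uniqueness argument are all sound.
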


\begin{proof}
For each $\kappa > 0$, we denote by $N(\kappa)$ the maximum number of tetrahedra
in a boundary patch $\CTa$ with shape-regularity parameter
$\kappa_{\CTa} \leq \kappa$. For each $N \in \mathbb N$, let $\lceil\widehat \LT_N\rceil$
denote the set of possible reference configurations (in terms of mesh topology/mesh connectivity)
of conforming planar triangular meshes with $N$ elements. Then $\widehat \LT_\kappa$ is defined by
distording each element of $\lceil \widehat \LT_{N(\kappa)} \rceil$ into a parachute patch.

Fix now $\kappa > 0$. If $\CTa$ is a boundary patch with
$\kappa_{\CTa} \leq \kappa$, then $\CTa$ has at most $N(\kappa)$
tetrahedra $K$. Because $\CTa$ is a boundary patch, there is a cone $\mathcal{C}$
with the vertex $\ba$ and a strictly positive solid angle such that
$\mathcal{C} \cap \oma = \emptyset$ (forming an ``opening''). From
the assumptions in Section~\ref{sec_VP}, namely that $\oma$ has a Lipschitz
boundary $\partial \oma$, there is exactly one such an opening. Thus, $\CTa$ can
be transformed into a first parachute patch $\CTo$ with the corresponding planar mesh
$\lceil \CTo \rceil$. This can be done by transforming $\ba$ into $\bzero$ by translation,
dilating the solid angle of the opening until all the vertices lie on the same side of a plane,
rotating the coordinate system, and then shortening the sizes of edges connecting each non-central vertex to the central
one to align them onto the plane $H$. Then, there exists a parachute patch
$\hCTo \in \widehat \LT_\kappa$ such that $\lceil \hCTo \rceil$ has the same
topology/connectivity as $\lceil \CTo \rceil$. Finally, $\CTo$ is equivalent to $\hCTo$,
and therefore $\CTa$ is equivalent to $\hCTo \in \widehat \LT_\kappa$.
\end{proof}

\subsection{Reference tetrahedron patches}

Next, the concept of a ``reference tetrahedron patch'' is key. It is visualized in Figure~\ref{fig_patches_par_tetr}, right, and defined as follows:

\begin{definition}[Reference tetrahedron patch]
\label{definition_tetrahedron}
A reference tetrahedron patch is a patch $\hCTo$ such that
the corresponding domain $\widehat \omega_{\bzero} = \widehat K$, where
\begin{equation*}
\widehat K
\eq
\left \{
\bx \in \mathbb R^3 \; | \;
\bx_1,\bx_2,\bx_3 \geq 0,
\quad
\bx_1 + \bx_2 + \bx_3 \leq 1
\right \}
\end{equation*}
is the reference tetrahedron. We further say that the patch has
mixed boundary conditions if either $\wGaess = \widehat F$ or $\wGanat = \widehat F$
where
\begin{equation*}
\widehat F
\eq
\left \{
\bx \in \widehat K \; | \; \bx_1 = 0
\right \}
\end{equation*}
is one face of the reference tetrahedron $\widehat K$ sharing the vertex $\bzero$.
We say that is has unique boundary conditions if $\wGanat = \widehat C$ or
$\wGaess = \widehat C$ with
\begin{equation*}
\widehat C
\eq
\left \{
\bx \in \widehat K \; | \; \bx_1\bx_2\bx_3 = 0
\right \},
\end{equation*}
i.e., $\widehat C$ corresponds to the three faces of the reference tetrahedron
$\widehat K$ that share the vertex $\bzero$.
\end{definition}

\subsection{Stability for reference tetrahedron patches}

From Section~\ref{sec_parachute}, we know that every boundary
patch is equivalent to a parachute patch. Moreover, it will follow from
Appendix~\ref{section_triangular_representation} that ``most'' parachute patches
are equivalent to a reference tetrahedron patch. Let us thus now establish
the stability of the discrete minimization on a reference tetrahedron patch.
This is done by using the following symmetrization operators.

\begin{definition}[Symmetrization operators]\label{def_sym}
For $1 \leq d \leq 3$, we let
\begin{equation*}
\TS_d(\bx) \eq \bx-2\bx_d\be^d
\end{equation*}
denote the change of coordinates flipping the direction of the $d^{\rm th}$ space
dimension. We call ``mirroring operator'' around $\{\bx_d = 0\}$ the map
$\TM_d^{\bullet}: \BL^2(\{\bx_d > 0\}) \to \BL^2(\mathbb R^3)$ defined by
\begin{equation*}
\left (\TM_d^{\bullet}\bv\right )|_{\{\bx_d > 0\}} \eq \bv \qquad
\left (\TM_d^{\bullet}\bv\right )|_{\{\bx_d < 0\}} \eq \TS_d^{\bullet}(\bv),
\end{equation*}
where $\TS_d^{\bullet}$, $\bullet \in \{{\rm c},{\rm d}\}$, is the Piola
mapping from~\eqref{eq_Piola} associated to $\TS_d$. We also introduce the trivial extension
operator $\TE_d^{\bullet}: \BL^2(\{\bx_d > 0\}) \to \BL^2(\mathbb R^3)$
\begin{equation*}
\left (\TE_d^{\bullet}\bv\right )|_{\{\bx_d > 0\}} \eq \bv \qquad
\left (\TE_d^{\bullet}\bv\right )|_{\{\bx_d < 0\}} \eq \bzero
\end{equation*}
for $\bullet \in \{{\rm c},{\rm d}\}$.
We call ``folding operator'' around $\{\bx_d = 0\}$ the map
$\TF_d^{\bullet}: \BL^2(\mathbb R^3) \to \BL^2(\{\bx_d > 0 \})$
defined by
\begin{equation*}
\TF^\bullet_d(\bv) \eq \bv|_{\{\bx_d > 0\}} - \TS_d^\bullet(\bv|_{\{\bx_d < 0\}})
\end{equation*}
for $\bullet \in \{{\rm c},{\rm d}\}$. Finally, for $\bullet \in \{{\rm c},{\rm d}\}$,
we define the trivial restriction operator
$\TR_d^{\bullet}: \BL^2(\mathbb R^3) \to \BL^2(\{\bx_d > 0 \})$ by
\begin{equation*}
\TR^\bullet_d(\bv) \eq \bv|_{\{\bx_d > 0\}}.
\end{equation*}
\end{definition}

We will apply the following result to reference tetrahedron patches $\hCTo$:

\begin{lemma}[Plane symmetrization]
\label{lemma_symmetrization}
Consider a vertex patch $\CTa$ around $\ba$, fix $d  \in \{1,2,3\}$, and
let $H_d$ denote the plane $\{\bx_d = 0 \}$. We assume either $\Gaess \cap H_d = \emptyset$
or $\Ganat \cap H_d = \emptyset$.  Then, if the symmetrized patch $\tCTa = \CTa \cup \TS_d(\CTa)$
still corresponds to a connected and Lipschitz domain $\toma \eq \oma \cup \TS_d(\oma)$
with admissible boundary $\tGamma \eq \Gamma \cup \TS_d(\Gamma)$, then
$\tCTa$ is an extension of $\CTa$ as per Definition~\ref{definition_extension},
with the operator norms bounded by $2$,
\begin{equation} \label{eq_norms_2}
    \enorm{\LE^{\rm c}}\enorm{\LR^{\rm c}} \leq 2.
\end{equation}
\end{lemma}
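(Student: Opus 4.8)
The plan is to build the four operators required by Definition~\ref{definition_extension} explicitly out of the symmetrization maps of Definition~\ref{def_sym}, and then to verify its items one at a time, each being a short consequence of the Piola calculus \eqref{eq_Piola}--\eqref{eq_piola_stable} together with the underlying geometric symmetry $\TS_d\circ\TS_d = \mathrm{id}$.

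\emph{Reductions.} Since $\tCTa = \CTa\cup\TS_d(\CTa)$ is a genuine conforming patch with connected domain $\toma$, the patch $\CTa$ lies entirely on one side of $H_d$; up to swapping the two half-spaces in Definition~\ref{def_sym}, we assume $\oma\subset\{\bx_d>0\}$. Put $\gamma_d\eq\partial\oma\cap H_d$; after symmetrization $\oma$ and $\TS_d(\oma)$ meet exactly along $\gamma_d$, which thus becomes an interior interface of $\toma$. A face of a patch tetrahedron lying in $H_d$ must contain the central vertex $\ba\in H_d$ (otherwise the tetrahedron is degenerate), so $\gamma_d$ is a union of boundary faces sharing $\ba$, i.e.\ $\gamma_d\subset\Gaess\cup\Ganat$. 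The hypothesis then leaves exactly two cases: the \emph{natural case} $\gamma_d\subset\Ganat$, and the \emph{essential case} $\gamma_d\subset\Gaess\subset\Gamma$; in the latter we read $\tGamma$ as $(\Gamma\cup\TS_d(\Gamma))\setminus\overline{\gamma_d}$, since $\gamma_d$ is no longer a boundary portion.

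\emph{Construction.} In the natural case we set, for $\bullet\in\{{\rm c},{\rm d}\}$, $\LE^{\bullet}\bv\eq(\TM_d^{\bullet}\bv)|_{\toma}$ --- equal to $\bv$ on $\oma$ and to the Piola reflection $\TS_d^{\bullet}\bv$ on $\TS_d(\oma)$ --- and $\LR^{\bullet}\tbv\eq\tbv|_{\oma}$ (the trivial restriction $\TR_d^{\bullet}$). In the essential case we set instead $\LE^{\bullet}\bv\eq(\TE_d^{\bullet}\bv)|_{\toma}$ --- equal to $\bv$ on $\oma$ and to $\bzero$ on $\TS_d(\oma)$ --- and $\LR^{\bullet}\tbv\eq(\TF_d^{\bullet}\tbv)|_{\oma}$ (the folding operator); throughout, functions are tacitly extended by $\bzero$ to the domains on which the maps of Definition~\ref{def_sym} act. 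Then $\CTa\subset\tCTa$ is clear; the projection identities $\LR^{\bullet}\circ\LE^{\bullet}=\mathrm{id}$ hold because $\LE^{\bullet}$ restricted to $\oma$ is the identity and $\TS_d^{\bullet}\bzero=\bzero$; the commuting relations $\curl\LE^{\rm c}=\LE^{\rm d}\curl$ and $\curl\LR^{\rm c}=\LR^{\rm d}\curl$ follow from \eqref{eq_piola_commute} applied on the reflected (resp.\ zero, resp.\ untouched) half; and the polynomial-preservation properties hold because the Piola transforms of $\TS_d$ carry $\ND_q$ and $\RT_q$ on a tetrahedron onto the corresponding spaces on its mirror image, $\bzero$ lies in these spaces, and $\tCTa=\CTa\cup\TS_d(\CTa)$. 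For the Sobolev mapping properties, the key point is that the covariant (resp.\ contravariant) Piola transform of $\TS_d$ leaves the tangential part (resp.\ normal component) of a vector on $H_d$ fixed: in the natural case this gives $\LE^{\rm c}\bv\in\BH(\ccurl,\toma)$ and $\LE^{\rm d}\bv\in\BH(\ddiv,\toma)$ by matching of tangential, resp.\ normal, traces across $\gamma_d$, with the homogeneous conditions on $\tGamma$ transported from $\Gamma$ since Piola maps intertwine traces, and the plain restriction $\LR^{\bullet}$ obviously maps back; in the essential case the extension by $\bzero$ is conforming precisely because $\bv$ vanishes tangentially (resp.\ normally) on $\gamma_d\subset\Gamma$, while $\TF_d^{\bullet}\tbv$ has vanishing tangential (resp.\ normal) trace on all of $H_d\supset\gamma_d$ and inherits the vanishing trace on $\Gamma\setminus\gamma_d\subset\tGamma$ from $\tbv$.

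\emph{Norms and main obstacle.} The Jacobian of $\TS_d$ being orthogonal, each $\TS_d^{\bullet}$ is an $\BL^2$-isometry, so in the natural case $\|\LE^{\rm c}\bv\|_{\toma}^2=2\|\bv\|_{\oma}^2$ and $\|\LR^{\rm c}\tbv\|_{\oma}\leq\|\tbv\|_{\toma}$, while in the essential case $\|\LE^{\rm c}\bv\|_{\toma}=\|\bv\|_{\oma}$ and $\|\LR^{\rm c}\tbv\|_{\oma}^2\leq2\|\tbv\|_{\toma}^2$ by the triangle inequality; in both cases $\enorm{\LE^{\rm c}}\,\enorm{\LR^{\rm c}}\leq\sqrt{2}\leq2$, which is \eqref{eq_norms_2}. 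The step requiring most care is the Sobolev mapping property --- checking that the reflected, resp.\ trivially extended, field is genuinely conforming across the freshly interiorized interface $\gamma_d$ and that the essential boundary condition is correctly carried from $\Gamma$ to $\tGamma$. It is exactly here that the dichotomy $\gamma_d\subset\Ganat$ versus $\gamma_d\subset\Gaess$ matters, dictating the choice between the mirror pair $(\TM_d,\TR_d)$ and the trivial-extension/folding pair $(\TE_d,\TF_d)$.
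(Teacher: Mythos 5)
Your proposal is correct and follows essentially the same route as the paper: the same case dichotomy (zero extension plus folding when the $H_d$-faces carry the essential condition; mirroring plus trivial restriction when they carry the natural condition), with the verification resting on the same facts that the covariant/contravariant reflections preserve tangential/normal traces on $H_d$ and are $\BL^2$-isometries. Your added details (the reduction showing $\gamma_d\subset\Gaess\cup\Ganat$, and the sharper bound $\enorm{\LE^{\rm c}}\enorm{\LR^{\rm c}}\leq\sqrt 2$) merely make explicit what the paper states tersely.
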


\begin{proof}
Following Definition~\ref{definition_extension}, we need to construct extension
and restriction operators between $\BH_{0,\Gamma}(\ccurl,\oma)$ and $\BH_{0,\tGamma}(\ccurl,\toma)$
that properly commute, and we distinguish two cases.

{\bf Case 1.} We first focus on the case where $\Ganat \cap H_d = \emptyset$, so
that functions in $\BH_{0,\Gamma}(\ccurl,\oma)$ satisfy a homogeneous essential
condition on $\partial \oma \cap H_d$. Due to this observation, we can simply extend a
function of $\BH_{0,\Gamma}(\ccurl,\oma)$ by zero onto $\toma$ and preserve its
curl-conformity. A similar observation holds true for $\BH_{0,\Gamma}(\ddiv,\oma)$.
As a result, letting $\LE^{\bullet} \eq \TE_d^{\bullet}$, $\bullet \in \{{\rm c},{\rm d}\}$
is satisfactory.
We then need to construct suitable restriction operators, which we do by folding
around $\{\bx_d = 0\}$. Indeed, here, we cannot simply
take the trivial restriction of a function defined on $\toma$,
since this would violate the (homogeneous) essential boundary conditions on
$\partial \oma \cap H_d$ in general. As a result, we introduce $\LR^{\bullet} \eq \TF^\bullet_d$
for $\bullet \in \{{\rm c},{\rm d}\}$. Because the Piola mappings preserve relevant traces
on $H_d$, they cancel out when folding around $\{ \bx_d = 0 \}$. Hence, the homogeneous essential
boundary conditions are always satisfied, so that we do have
$\LR^{\rm c}: \BH_{0,\tGamma}(\ccurl,\toma) \to \BH_{0,\Gamma}(\ccurl,\oma)$
as well as the counterpart for $\LR^{\rm d}$. The commuting property
between $\LR^{\rm c}$ and $\LR^{\rm d}$ also holds due to standard properties
of Piola mappings, whereas~\eqref{eq_norms_2} is obvious. We finally need to verify that
$\LR^\bullet \circ \LE^\bullet = \operatorname{Id}$.
Let $\bv \in \BL^2(\oma)$. Since $\LE^\bullet(\bv) = \bzero$ on $\toma \setminus \oma$,
we readily see that
$(\LR^\bullet \circ \LE^\bullet)(\bv) = \bv|_{\oma} - \TS_d^{\bullet}(\bzero) = \bv$,
which concludes the proof.

{\bf Case 2.} When $\Gaess \cap H_d = \emptyset$, we essentially proceed the other way
around. It is here harder to extend the functions (their traces do not have to vanish on $H_d$),
but it easier to restrict them (there is no essential condition to satisfy on $H_d$). In fact,
the mirror operators $\LE^\bullet = \TM_d^\bullet$ and the trivial restriction operators
$\LR^\bullet = \TR^\bullet$ have all the required properties. The arguments are similar
to Case 1, and we do not reproduce them for the sake of shortness.
\end{proof}

\begin{figure}
\centerline{\raisebox{1.8cm}{\includegraphics[width=0.35\linewidth]{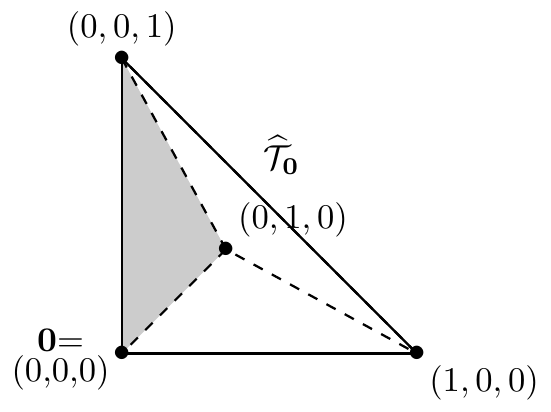}} \qquad \raisebox{0.5cm}{\includegraphics[width=0.43\linewidth]{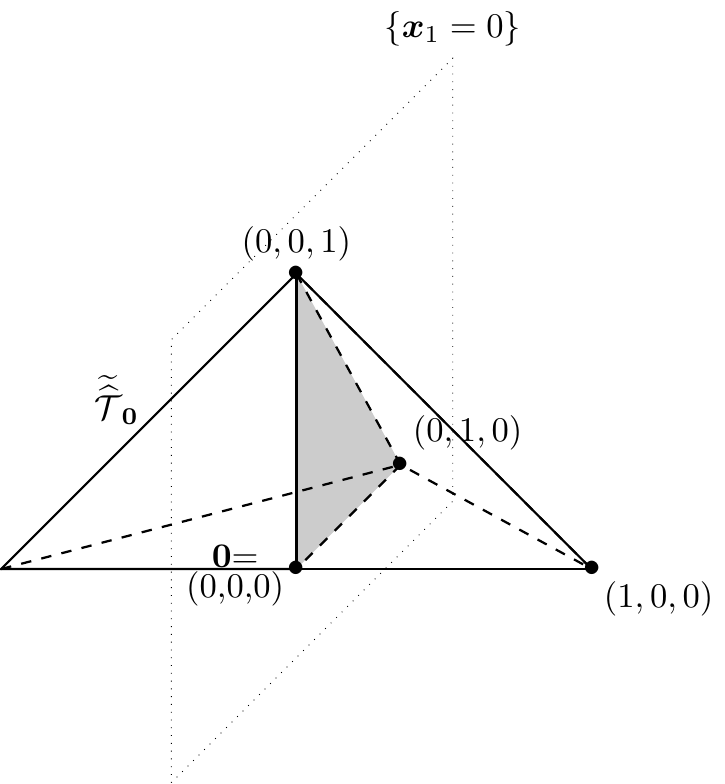}}}
\centerline{\raisebox{0.25cm}{\includegraphics[width=0.43\linewidth]{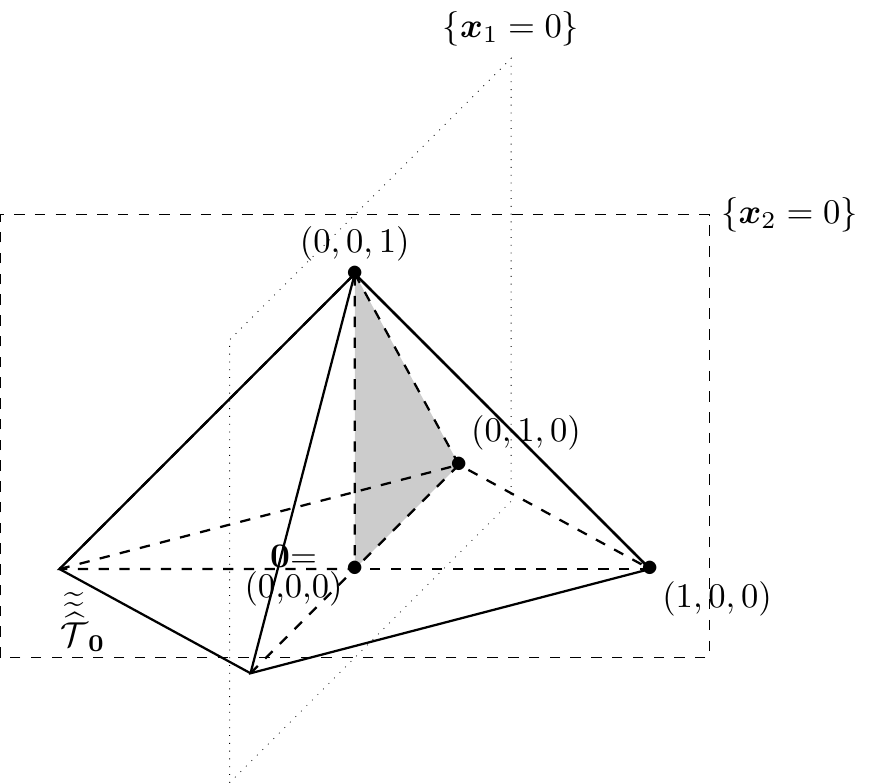}} \includegraphics[width=0.57\linewidth]{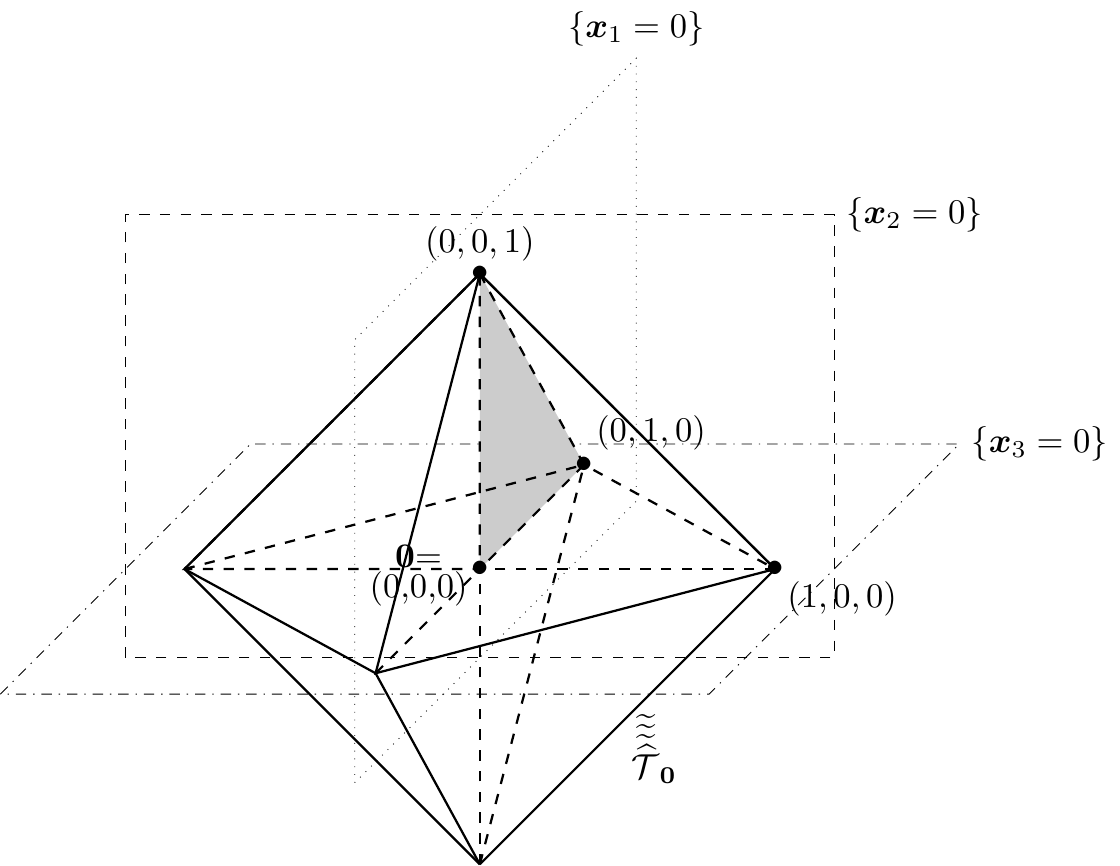}}
\caption{Reference tetrahedron patch (top left), and its consecutive symmetrizations around $\{ \bx_1 = 0 \}$ (top right), $\{ \bx_2 = 0 \}$ (bottom left), and $\{ \bx_3 = 0 \}$ (bottom right). Mixed boundary conditions on $\hCTo$; unique boundary conditions on $\widetilde{\widehat {\mathcal{T}}}_{\boldsymbol 0}$ through $\widetilde{\widetilde{\widehat {\mathcal{T}}}}_{\boldsymbol 0}$}
\label{fig_symm}
\end{figure}

Now, stable discrete minimization for a reference tetrahedron patch easily follows, since it can be extended into an interior patch:

\begin{corollary}[Stable discrete minimization for reference tetrahedron patches]
\label{corollary_tetrahedron}
Let $\hCTo$ be a reference tetrahedron patch in the sense of
Definition~\ref{definition_tetrahedron}. Then, for all $p \geq 0$, we have
\begin{equation*}
C_{{\rm st},p,\hCTo,\widehat\Gamma} \leq C(\kappa_{\CTo}),
\end{equation*}
where the constant on the right-hand side only depends on the shape-regularity parameter
$\kappa_{\hCTo}$.
\end{corollary}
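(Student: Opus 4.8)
The plan is to reduce the reference tetrahedron patch $\hCTo$ to an \emph{interior} patch by three successive mirror symmetrizations across the coordinate planes, and then to invoke the interior-patch bound established in Section~\ref{section_interior_patches} together with Lemmas~\ref{lemma_symmetrization}, \ref{lem_comp}, and~\ref{lemma_extension}.

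\textbf{Setting up the symmetrizations.} The three faces of $\widehat K$ containing $\bzero$ lie, respectively, in the coordinate hyperplanes $H_d = \{\bx_d = 0\}$, $d \in \{1,2,3\}$, while the remaining face $\{\bx_1+\bx_2+\bx_3=1\}$ does not meet $\bzero$. By Definition~\ref{definition_tetrahedron}, each of the three faces through $\bzero$ carries a single boundary-condition type (essential or natural), and the relative interiors of the two faces not contained in $H_d$ lie in $\{\bx_d>0\}$ and hence do not meet $H_d$. Therefore the portion of $\partial\widehat\omega_{\bzero}$ lying in $H_d$ is the single face $\{\bx_d=0\}\cap\widehat K$, of one type, so exactly one of $\wGaess\cap H_d$, $\wGanat\cap H_d$ is empty and the hypothesis of Lemma~\ref{lemma_symmetrization} holds for every $d$.

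\textbf{Building an interior patch.} Set $\hCTo^{(0)}\eq\hCTo$ and, for $d=1,2,3$, let $\hCTo^{(d)}\eq\hCTo^{(d-1)}\cup\TS_d(\hCTo^{(d-1)})$, a patch around $\bzero$ with domain $\widehat\omega^{(d)}\eq\widehat\omega^{(d-1)}\cup\TS_d(\widehat\omega^{(d-1)})$ and boundary datum $\widehat\Gamma^{(d)}$ produced by Lemma~\ref{lemma_symmetrization}. One checks directly that $\widehat\omega^{(1)}=\{\bx_2,\bx_3\ge 0,\ |\bx_1|+\bx_2+\bx_3\le 1\}$, $\widehat\omega^{(2)}=\{\bx_3\ge 0,\ |\bx_1|+|\bx_2|+\bx_3\le 1\}$, and $\widehat\omega^{(3)}=\{|\bx_1|+|\bx_2|+|\bx_3|\le 1\}$ are convex, hence connected, Lipschitz, and homotopic to a ball, with admissible boundary decompositions. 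Since reflections map faces to faces and preserve boundary-condition type, at the $d$-th step the part of $\partial\widehat\omega^{(d-1)}$ lying in $H_d$ is a union of faces of a single type, so Lemma~\ref{lemma_symmetrization} applies and $\hCTo^{(d)}$ is an extension of $\hCTo^{(d-1)}$ with $\enorm{\LE^{\rm c}}\enorm{\LR^{\rm c}}\le 2$. Crucially, $\widehat\omega^{(3)}$ is an octahedron containing an open ball around $\bzero$, so $\hCTo^{(3)}$ is an \emph{interior} patch and $\widehat\Gamma^{(3)}=\partial\widehat\omega^{(3)}$.

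\textbf{Conclusion and main obstacle.} By Lemma~\ref{lem_comp} and submultiplicativity of operator norms under composition, $\hCTo^{(3)}$ is an extension of $\hCTo$ with $\enorm{\LE^{\rm c}}\enorm{\LR^{\rm c}}\le 2^3=8$, whence Lemma~\ref{lemma_extension} gives $C_{{\rm st},p,\hCTo,\widehat\Gamma}\le 8\,C_{{\rm st},p,\hCTo^{(3)},\widehat\Gamma^{(3)}}$. As $\hCTo^{(3)}$ is an interior patch, Section~\ref{section_interior_patches} yields $C_{{\rm st},p,\hCTo^{(3)},\widehat\Gamma^{(3)}}\le C(\kappa_{\hCTo^{(3)}})$ uniformly in $p$; and since the tetrahedra of $\hCTo^{(3)}$ are images of those of $\hCTo$ under Euclidean isometries, $\kappa_{\hCTo^{(3)}}=\kappa_{\hCTo}$, so $C_{{\rm st},p,\hCTo,\widehat\Gamma}\le 8\,C(\kappa_{\hCTo})=C(\kappa_{\hCTo})$. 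The only delicate point is the verification, at each of the three steps, that $\hCTo^{(d)}$ is a legitimate vertex patch in the sense of Section~\ref{sec_VP} (connected Lipschitz domain, with $\widehat\Gamma^{(d)}$ and its complement both connected and Lipschitz) and that $H_d$ carries a uniform boundary-condition type for $\hCTo^{(d-1)}$ so that Lemma~\ref{lemma_symmetrization} is applicable; the geometry is elementary because the domains stay convex, but one must track the boundary conditions carefully, noting that a face of essential (resp. natural) type contained in $H_d$ becomes an \emph{interior} face after symmetrization and thus drops out of the boundary — the very mechanism that converts the boundary patch $\hCTo$ into the plain interior patch $\hCTo^{(3)}$ with $\widehat\Gamma^{(3)}=\partial\widehat\omega^{(3)}$.
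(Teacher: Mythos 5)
Your proposal is correct and follows essentially the same route as the paper: three successive plane symmetrizations via Lemma~\ref{lemma_symmetrization}, composed through Lemma~\ref{lem_comp} and Lemma~\ref{lemma_extension} with the factor $2^3=8$, reduction of the mixed case to unique boundary conditions after the first reflection, preservation of shape regularity under the isometries, and finally the interior-patch case of Theorem~\ref{thm_Hc_cons}. Your explicit verification that the boundary portion in each $H_d$ carries a single boundary-condition type at every stage is exactly the point the paper handles by insisting that $\widehat F$ lie in $\{\bx_1=0\}$.
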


\begin{proof}
The result is a simple consequence of Lemma~\ref{lemma_symmetrization}
(which uses the concept of extension as per Definition~\ref{definition_extension}
and Lemma~\ref{lemma_extension}) and Theorem~\ref{thm_Hc_cons} for interior patches.
Indeed, successively extending the patch by symmetry around the planes
$\{ \bx_d = 0 \}$, $d=1$, $2$, and $3$ results in an interior patch with
8 copies of the original patch $\hCTo$ that is an extension of $\hCTo$,
see Figure~\ref{fig_symm}. For mixed boundary conditions in the sense of
Definition~\ref{definition_tetrahedron}, it is important that $\widehat F$
lies in the plane $\{ \bx_1 = 0 \}$. Then, symmetrizing around $\{ \bx_1 = 0 \}$
following Lemma~\ref{lemma_symmetrization}, the new patch
$\widetilde{\widehat {\mathcal{T}}}_{\boldsymbol 0}$ has unique boundary conditions,
and so is the case therefrom, see Figure~\ref{fig_symm}. Since the last equivalent patch
is interior in the sense of Section~\ref{sec_VP}, we can apply the stable discrete
minimization of Theorem~\ref{thm_Hc_cons} proved in Section~\ref{section_interior_patches}
for any interior patch. Note that because the restriction and extension operators of
Lemma~\ref{lemma_symmetrization} all have operator norms bounded by $2$ as per~\eqref{eq_norms_2},
the resulting constant only depends on the shape-regularity parameter of the tetrahedron patch
$\hCTo$ (the elements are not distorted by the symmetrizations).
\end{proof}

\subsection{Transformation of a general parachute patch into a reference tetrahedron patch}

Unfortunately, not all parachute patches are equivalent to a reference tetrahedron patch.
There in particular exist ``problematic'' cases which are not covered by Appendix
\ref{section_triangular_representation}. Specifically, this happens when the surface mesh
$\lceil \CTo \rceil$, cf. Figure~\ref{fig_patches_par_tetr}, left, has ``problematic'' vertices.
These are the vertices corresponding to ``interior'' edges of $\lceil \CTo \rceil$ with two
vertices on the boundary of $\lceil \omo \rceil$. To give an example, two such vertices and the
corresponding edge are highlighted in Figure~\ref{fig_patches_par_tetr}, left, by the two red
squares connected by the red dash-dotted line.

In this section, we propose a strategy to transform any such a problematic patch into a patch
equivalent to a reference tetrahedron patch. This is done through the concept
of patch extension around the problematic vertices.

\begin{figure}
\begin{minipage}{.40\linewidth}
\begin{tikzpicture}[scale=2]

\coordinate (b)  at (0,0);

\coordinate (b1) at ($(b) + (-  10:1.0)$);
\coordinate (b2) at ($(b) + (-  80:1.0)$);
\coordinate (b3) at ($(b) + (- 120:1.0)$);
\coordinate (b4) at ($(b) + (- 150:1.0)$);

\draw[ultra thick] (b) -- (b1);
\draw[ultra thick] (b) -- (b2);
\draw[ultra thick] (b) -- (b3);
\draw[ultra thick] (b) -- (b4);

\draw[ultra thick] (b1) -- (b2) -- (b3) -- (b4);

\coordinate (c) at ($(b) + ( 60:0.5)$);
\coordinate (d) at ($(b) + (120:0.5)$);

\draw (b) -- (c);
\draw (b) -- (d);

\draw (b1) -- (c) -- (d) -- (b4);

\coordinate (c1) at ($0.333*(c)+0.666*(b1)$);
\coordinate (c2) at ($0.666*(c)+0.333*(b1)$);

\draw[dashed] (b) -- (c1);
\draw[dashed] (b) -- (c2);

\coordinate (d1) at ($0.333*(d)+0.666*(c)$);
\coordinate (d2) at ($0.666*(d)+0.333*(c)$);

\draw[dashed] (b) -- (d1);
\draw[dashed] (b) -- (d2);

\coordinate (e1) at ($0.333*(b4)+0.666*(d)$);
\coordinate (e2) at ($0.666*(b4)+0.333*(d)$);

\draw[dashed] (b) -- (e1);
\draw[dashed] (b) -- (e2);

\draw[ultra thick,dashed] (b4) -- ($(b4) + (160:0.3)$);
\draw[ultra thick,dashed] (b4) -- ($(b4) + (240:0.3)$);

\draw[ultra thick,dashed] (b3) -- ($(b3) + (180:0.3)$);
\draw[ultra thick,dashed] (b3) -- ($(b3) + (280:0.3)$);

\draw[ultra thick,dashed] (b2) -- ($(b2) + (-160:0.3)$);
\draw[ultra thick,dashed] (b2) -- ($(b2) + (- 80:0.3)$);
\draw[ultra thick,dashed] (b2) -- ($(b2) + (-  0:0.3)$);

\draw[ultra thick,dashed] (b1) -- ($(b1) + (- 90:0.3)$);
\draw[ultra thick,dashed] (b1) -- ($(b1) + (- 20:0.3)$);

\draw (c2) node[anchor=south west] {$V'$};
\draw ($0.5*(d1)+0.5*(d2)$) node[anchor=south] {$\widetilde V'$};
\draw (e1) node[anchor=east] {$\widetilde V$};
\draw (b) node {$\bullet$};
\draw (b) node[anchor=north west] {$\bb$};

\end{tikzpicture}
\end{minipage}
\begin{minipage}{.40\linewidth}
\begin{tikzpicture}[scale=2,rotate=90]

\coordinate (b)  at (0,0);

\coordinate (b1) at ($(b) + (   30:1.1)$);
\coordinate (b2) at ($(b) + (- 100:1.3)$);
\coordinate (b3) at ($(b) + (- 210:1.2)$);

\draw[ultra thick] (b) -- (b1);
\draw[ultra thick] (b) -- (b2);
\draw[ultra thick] (b) -- (b3);

\draw[ultra thick] (b1) -- (b2) -- (b3);

\coordinate (c) at ($(b) + ( 60:0.8)$);
\coordinate (d) at ($(b) + (120:0.8)$);

\draw (b) -- (c);
\draw (b) -- (d);

\draw (b1) -- (c) -- (d) -- (b3);

\coordinate (c1) at ($0.5*(c)+0.5*(b1)$);

\draw[dashed] (b) -- (c1);

\coordinate (d1) at ($0.5*(d)+0.5*(c)$);

\draw[dashed] (b) -- (d1);

\coordinate (e1) at ($0.5*(b3)+0.5*(d)$);

\draw[dashed] (b) -- (e1);

\draw[ultra thick,dashed] (b1) -- ($(b1) + (  60:0.3)$);
\draw[ultra thick,dashed] (b1) -- ($(b1) + (- 20:0.3)$);
\draw[ultra thick,dashed] (b1) -- ($(b1) + (- 90:0.3)$);

\draw[ultra thick,dashed] (b2) -- ($(b2) + (  20:0.3)$);
\draw[ultra thick,dashed] (b2) -- ($(b2) + (- 60:0.3)$);
\draw[ultra thick,dashed] (b2) -- ($(b2) + (-140:0.3)$);
\draw[ultra thick,dashed] (b2) -- ($(b2) + (-220:0.3)$);

\draw[ultra thick,dashed] (b3) -- ($(b3) + (-140:0.3)$);
\draw[ultra thick,dashed] (b3) -- ($(b3) + (-240:0.3)$);

\draw (c1) node[anchor=east] {$V'$};
\draw (d1) node[anchor=east] {$\widetilde V'$};
\draw (e1) node[anchor=east] {$\widetilde V$};
\draw (b) node {$\bullet$};
\draw (b) node[anchor=north west] {$\bb$};

\end{tikzpicture}
\end{minipage}

\caption{Extension around a problematic vertex $\bb$ (top view): three virtual tetrahedra added and then each is divided into the number of tetrahedra originally sharing the vertex $\bm{b}$}
\label{figure_vertex_extension}
\end{figure}
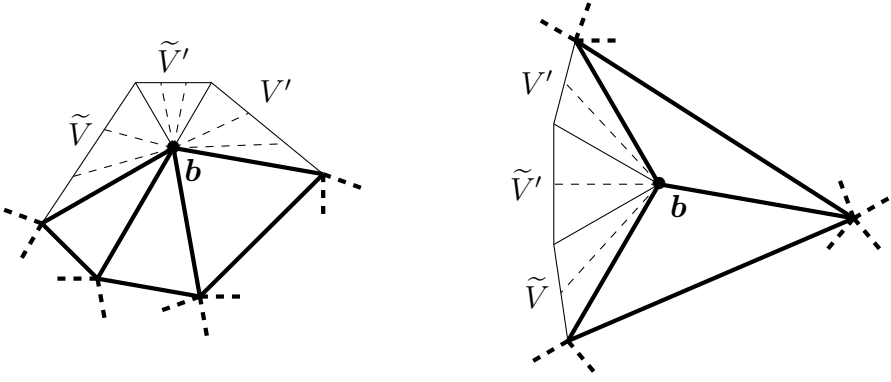

\begin{figure}
\begin{minipage}{.40\linewidth}
\begin{tikzpicture}[scale=2]

\draw (-1, 0) -- ( 1, 0);
\draw ( 0,-1) -- ( 0, 1);

\draw (-1, 0) -- ( 0,-1) -- ( 1, 0) -- ( 0, 1) -- cycle;

\draw[ultra thick] ( 1, 0) -- ( 0,  1) -- (0,0) -- cycle;

\draw[dashed] (0,0) -- (-0.333,-0.666);
\draw[dashed] (0,0) -- (-0.666,-0.333);

\draw[dashed] (0,0) -- ( 0.333,-0.666);
\draw[dashed] (0,0) -- ( 0.666,-0.333);

\draw[ultra thick] (0,0) -- ( 0.333, 0.666);
\draw[ultra thick] (0,0) -- ( 0.666, 0.333);

\draw[dashed] (0,0) -- (-0.333, 0.666);
\draw[dashed] (0,0) -- (-0.666, 0.333);

\draw (-0.5,-0.5) node[anchor=north east] {$\widetilde U'$};
\draw (-0.5, 0.5) node[anchor=south east] {$U'$};
\draw ( 0.5, 0.5) node[anchor=south west] {$U$};
\draw ( 0.5,-0.5) node[anchor=north west] {$\widetilde U$};
\draw (0,0) node {$\bullet$};
\draw (0,0) node[anchor=north west] {$\bb$};

\end{tikzpicture}
\end{minipage}
\begin{minipage}{.40\linewidth}
\begin{tikzpicture}[scale=2]

\draw (-1, 0) -- ( 1, 0);
\draw ( 0,-1) -- ( 0, 1);

\draw (-1, 0) -- ( 0,-1) -- ( 1, 0) -- ( 0, 1) -- cycle;

\draw[ultra thick] ( 1, 0) -- ( 0,  1) -- (0,0) -- cycle;

\draw[dashed     ] (0,0) -- (-0.5,-0.5);
\draw[dashed     ] (0,0) -- ( 0.5,-0.5);
\draw[ultra thick] (0,0) -- ( 0.5, 0.5);
\draw[dashed     ] (0,0) -- (-0.5, 0.5);

\draw (-0.5,-0.5) node[anchor=north east] {$\widetilde U'$};
\draw (-0.5, 0.5) node[anchor=south east] {$U'$};
\draw ( 0.5, 0.5) node[anchor=south west] {$U$};
\draw ( 0.5,-0.5) node[anchor=north west] {$\widetilde U$};
\draw (0,0) node {$\bullet$};
\draw (0,0) node[anchor=north west] {$\bb$};

\end{tikzpicture}
\end{minipage}

\caption{Extension around a problematic vertex $\bb$ after a mapping (top view)}
\label{figure_vertex_extension_2}
\end{figure}
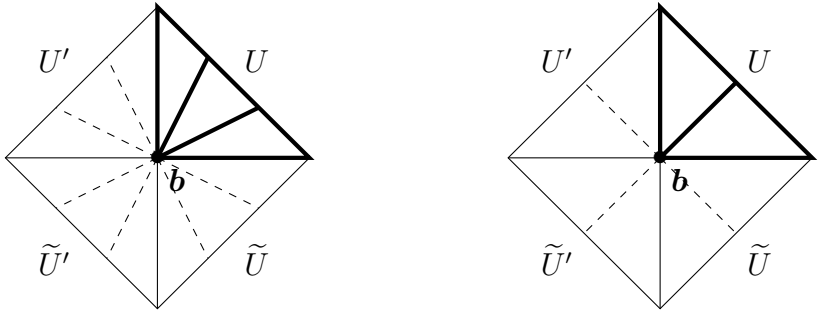

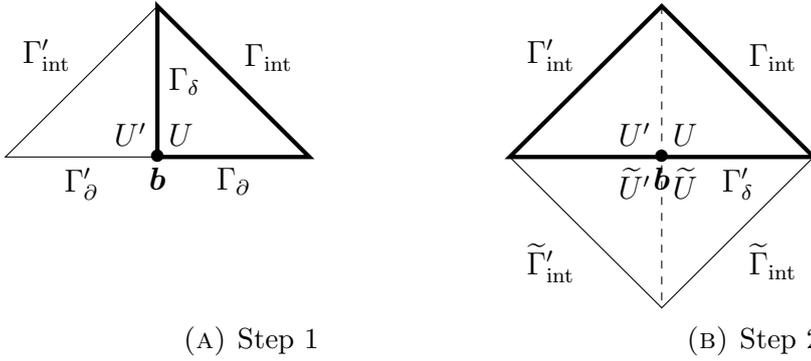
\begin{figure}

\begin{minipage}{.40\linewidth}
\begin{tikzpicture}[scale=2]

\draw[white] (0,1) -- (0,-1);

\draw[ultra thick] (0,0) -- ( 1,0) -- (0, 1) -- cycle;

\draw ( 0.5, 0.5) node[anchor=south west] {$\Gamma_{\rm int}$};
\draw ( 0.5, 0.0) node[anchor=north     ] {$\Gamma_{\partial}$};
\draw ( 0.0, 0.5) node[anchor=west      ] {$\Gamma_{\delta}$};
\draw (-0.5, 0.5) node[anchor=south east] {$\Gamma_{\rm int}'$};
\draw (-0.5, 0.0) node[anchor=north     ] {$\Gamma_{\partial}'$};

\draw (0,0) -- (0,1) -- (-1,0) -- cycle;

\draw ( 0., 0.) node[anchor=south west] {$U$};
\draw (-0., 0.) node[anchor=south east] {$U'$};

\draw (0,0) node {$\bullet$};
\draw (0,0) node[anchor=north] {$\bb$};

\end{tikzpicture}
\subcaption{Step 1}
\end{minipage}
\begin{minipage}{.40\linewidth}
\begin{tikzpicture}[scale=2]

\draw[ultra thick] (-1,0) -- (0, 1) -- (1,0) -- cycle;
\draw              (-1,0) -- (0,-1) -- (1,0) -- cycle;

\draw ( 0.5, 0.0) node[anchor=north] {$\Gamma_{\delta}'$};

\draw ( 0., 0.) node[anchor=south west] {$U$};
\draw (-0., 0.) node[anchor=south east] {$U'$};
\draw ( 0., 0.) node[anchor=north west] {$\widetilde U$};
\draw (-0., 0.) node[anchor=north east] {$\widetilde U'$};

\draw ( 0.5, 0.5) node[anchor=south west] {$\Gamma_{\rm int}$};
\draw (-0.5, 0.5) node[anchor=south east] {$\Gamma_{\rm int}'$};

\draw (-0.5,-0.5) node[anchor=north east] {$\widetilde \Gamma_{\rm int}'$};
\draw ( 0.5,-0.5) node[anchor=north west] {$\widetilde \Gamma_{\rm int} $};

\draw (0,0) node {$\bullet$};
\draw (0,0) node[anchor=north] {$\bb$};

\draw[dashed] (0,1) -- (0,-1);

\end{tikzpicture}
\subcaption{Step 2}
\end{minipage}
\caption{Symmetrization of $U$ with the problematic vertex $\bm{b}$ in two steps: first over $\{ \bx_1 = 0 \}$ and second over $\{ \bx_2 = 0 \}$}
\label{figure_vertex_extension_steps}
\end{figure}

\begin{lemma}[Extension around problematic vertices]
\label{lemma_vertex_extension}
Consider a parachute patch $\CTo$ such that
$\Goess$ and $\Gonat$ are both connected with Lipschitz boundaries, and let $\bb$
be a vertex on the boundary of $\lceil \omo \rceil$. Then, there exists an extension
$\tCTo$ is of $\CTo$, $\tCTo \supset \CTo$, for which $\bb$ lies in the interior of
$\lceil \tomo \rceil$, and the edges added in $\lceil \tCTo \rceil$
as compared to $\lceil \CTo \rceil$ either entirely lie in $\partial \tomo$, or have
one interior vertex inside $\lceil \tomo \rceil$. In addition, $\Goess \subset \tGoess$,
$\Gonat \subset \tGonat$, and both $\tGoess$ and $\tGonat$ are
connected with Lipschitz boundaries. Crucially,
\begin{equation*}
C_{{\rm st},p,\CTo,\Gamma} \leq C(\CTo) C_{{\rm st},p,\tCTo,\widetilde \Gamma},
\qquad
\forall p \geq 0.
\end{equation*}
\end{lemma}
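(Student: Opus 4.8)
The plan is to reduce to the interior-patch theory by a \emph{local} surgery near the edge $e\eq\bzero\bb$: I will append a controlled number of virtual tetrahedra so that $e$ (hence $\bb$) becomes an interior edge, package this surgery as a patch extension in the sense of Definition~\ref{definition_extension}, and invoke Lemma~\ref{lemma_extension}. Let $m$ be the number of tetrahedra of $\CTo$ sharing $e$ --- equivalently, the number of triangles of $\lceil\CTo\rceil$ at the boundary vertex $\bb$; these form the edge patch $\CTe$, whose two boundary faces containing $e$ I denote $F_\partial$ and $F_\delta$. Using an equivalence in the sense of Definition~\ref{definition_equiv} together with Lemma~\ref{lemma_equiv}, at the price of a factor $C(\CTo)$ accounting for a possible deterioration of shape-regularity, I may place $\CTo$ so that $\bzero$ is the origin, $e$ lies on the $\bx_3$-axis, $\CTe$ fills (locally) the dihedral wedge $\{\bx_1\ge 0,\ \bx_2\ge 0\}$ around $e$ with $F_\partial\subset\{\bx_1=0\}$, $F_\delta\subset\{\bx_2=0\}$ and each of its $m$ tetrahedra of dihedral angle $\pi/(2m)$ at $e$, and moreover $\CTe$ is confined to a small ball about $e$ meeting the rest of $\omo$ only along $F_\partial$, $F_\delta$ and the faces of $\CTe$ not containing $e$. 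Such a placement exists because $\lceil\omo\rceil$ is a finite triangulated topological disk with $\bb$ on its boundary: one realizes the star of $\bb$ as a tiny right-angled fan and embeds the remaining disk away from it, in the spirit of the planar realizations underlying Appendix~\ref{section_triangular_representation}.

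Next I fill the complement of the wedge of $\CTe$ around $e$ (of dihedral angle $3\pi/2$) with three isometric copies $B,C,D$ of $\CTe$ produced by the plane reflections $\TS_d$ of Definition~\ref{def_sym}: $B\eq\TS_1(\CTe)$, glued to $\CTe$ along $F_\partial$; $C\eq\TS_2(B)$; and $D\eq\TS_1(C)$. Since $\TS_1\TS_2\TS_1$ coincides with $\TS_2$ on planes through the $\bx_3$-axis, $D$ closes up against $\CTe$ exactly along $F_\delta$, and the tangential traces of the four copies match along all common faces by the trace-preservation of the Piola maps. The patch $\tCTo$ obtained by appending these $3m$ virtual tetrahedra to $\CTo$ is again a parachute patch (the $\TS_d$ preserve $\{\bx_3=1\}$); the edge $e$ now has a full $2\pi$ neighbourhood, so $\bb$ is an interior vertex of $\lceil\tomo\rceil$; and $\tCTo$ is still a boundary patch, the opening of $\omo$ away from $e$ being untouched. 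Every edge of $\lceil\tCTo\rceil$ not already in $\lceil\CTo\rceil$ is either a spoke at $\bb$ --- hence incident to the now-interior vertex $\bb$ --- or an outer edge of one of the new fans, which lies in $\partial\tomo$; this is precisely the asserted dichotomy. Finally $\Goess\subset\tGoess$ and $\Gonat\subset\tGonat$ because only faces are appended, the new boundary faces inheriting the type of the faces they are mirrored from, and $\tGoess$, $\tGonat$ remain connected with Lipschitz boundaries.

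It then remains to exhibit the extension structure. I let $\LE^{\rm c}$ and $\LE^{\rm d}$ be the identity on $\omo$ and, on $B\cup C\cup D$, the trivial zero extension $\TE_d^{\bullet}$ across whichever of $F_\partial$, $F_\delta$ carries an essential boundary condition and the mirroring $\TM_d^{\bullet}$ across whichever carries a natural one; correspondingly $\LR^{\rm c}$, $\LR^{\rm d}$ are the trivial restriction $\TR_d^{\bullet}$, corrected by the fold $\TF_d^{\bullet}$ over the essential faces among $F_\partial$, $F_\delta$. Exactly as in the two cases in the proof of Lemma~\ref{lemma_symmetrization}, the commuting relations with $\curl$, the mapping properties into $\ND_q$/$\RT_q$ and into $\BH_{0,\tGamma}(\ccurl,\tomo)$/$\BH_{0,\tGamma}(\ddiv,\tomo)$, and the identities $\LR^{\bullet}\circ\LE^{\bullet}=\operatorname{Id}$ all follow from the standard Piola-map calculus; the one new bookkeeping point is that the ``side'' faces of $B,C,D$ (the reflected faces of $\CTe$ not containing $e$) end up on $\partial\tomo$ and must be assigned the essential type when reached by a zero extension and the natural type when reached by a mirroring, so that the folded restriction stays $\BH(\ccurl)$/$\BH(\ddiv)$-conforming across the original interfaces of $\CTe$. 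As each building block is an $\BL^2$-isometry composed finitely often, $\enorm{\LE^{\rm c}}\,\enorm{\LR^{\rm c}}\le C$ with $C$ absolute, so Lemma~\ref{lemma_extension} bounds the stability constant of the normalized patch in terms of $C_{{\rm st},p,\tCTo,\widetilde\Gamma}$; composing with the equivalence estimate of the first paragraph yields $C_{{\rm st},p,\CTo,\Gamma}\le C(\CTo)\,C_{{\rm st},p,\tCTo,\widetilde\Gamma}$ for all $p\ge 0$, as claimed.

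The hard (genuinely non-routine) part will be the geometric realization invoked in the first paragraph: embedding an arbitrary finite triangulated disk, affinely on each triangle and without degeneracies, so that the star of the prescribed boundary vertex becomes a small right-angled fan isolated from the remainder of the disk --- a planar-realizability statement in the same family as the Tutte-type results used elsewhere in the paper, and the place where the non-explicit constant $C(\CTo)$ is incurred. The remaining steps are a careful but essentially mechanical localization of the symmetrization/extension machinery of Lemma~\ref{lemma_symmetrization}; the only subtle checks are the type assignment of the new boundary faces noted above and the verification that the surgery creates no new ``problematic'' edges, which is what legitimizes applying the lemma repeatedly, one problematic vertex at a time.
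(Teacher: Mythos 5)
Your construction is, in substance, the paper's: you close the $2\pi$ dihedral angle at the edge $[\bzero,\bb]$ with three reflected copies of the local edge patch, read the result as a patch extension whose operators are mirrorings/zero-extensions and trivial restrictions/folds chosen according to the boundary-condition type on the two boundary faces at that edge, and conclude via Lemma~\ref{lemma_extension} together with an equivalence factor. The three-case analysis, the type assignment of the new boundary faces, the dichotomy for the added edges, and the operator-norm bounds all match the paper's proof of this lemma.

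The one genuine gap is exactly the step you flag yourself: you make the whole argument rest on physically re-embedding $\CTo$ so that the star of $\bb$ becomes a small, isolated, right-angled fan, and you leave that realizability statement unproven. It is also unnecessary, and the paper avoids it. The paper first inserts three virtual tetrahedra of \emph{arbitrary} shape into the physical configuration --- this only requires some room on the outside of the two boundary faces at $[\bzero,\bb]$, which is guaranteed because $\partial\omo$ is Lipschitz --- and only \emph{then} maps the four blocks $U,U',\widetilde U',\widetilde U$ onto congruent right-angled wedges. Crucially, that normalization is performed abstractly, ``in isolation'': the extension and restriction operators act only on the tetrahedra containing $[\bzero,\bb]$ and on their three copies, so nothing needs to be said about the rest of the patch, self-penetration is irrelevant, and the normalization enters only as conjugation by a piecewise-affine bilipschitz map whose norms depend on $\CTo$ alone, exactly as in Lemma~\ref{lemma_equiv} (this is also where the non-explicit constant $C(\CTo)$ is legitimately incurred). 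If you replace your global realizability claim by this local, abstract normalization, your proof closes and coincides with the paper's.
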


\begin{proof}
We start by ``closing'' the vertex patch around the problematic vertex $\bb$.
We first add three new ``virtual'' tetrahedra, as illustrated (top view) in
Figure~\ref{figure_vertex_extension}. This is always possible, since the boundary
of the patch $\CTo$ is Lipschitz. Intuitively, since the domain cannot lie on the
two sides of its boundary, there is always ``room'' to fit three tetrahedra.
These tetrahedra may need to have small edges, but since we only work in a reference
configuration, this is not important. Remark that we can also do this in such a way
that domain with the added tetrahedra is still Lipschitz.

Proceeding counterclockwise, we call the three tetrahedra closing the loop
around the vertex $\bb$ as $V'$, $\widetilde V'$, and $\widetilde V$.
Denoting by $N$ the number of tetrahedra sharing $\bb$ in $\CTo$,
we then divide the virtual tetrahedra $V'$, $\widetilde V'$, and $\widetilde V$
into $N$ real tetrahedra each, all sharing the vertex $\bb$. This is again shown
in Figure~\ref{figure_vertex_extension}.

Let us call $K_1,\dots,K_N$ the tetrahedra sharing the vertex $\bb$ in the original
patch. Let us denote by $K'_j$, $\widetilde K'_j$, and $\widetilde K_j$ their
counterparts in the virtual tetrahedra $V'$, $\widetilde V'$, and $\widetilde V$.
The extension and restriction operators we are going to construct
will only involve the tetrahedra $K_j$ in the patch $\CTa$ sharing the vertex $\bb$
and eventually map only into the $K_j$, $K'_j$, $\widetilde K'_j$, and $\widetilde K_j$.
Hence, we can think of the elements around $\bb$ in isolation.

Next, we abstractly map the elements around $\bb$ in such a way that the image
of $\bb$ is above $\bzero$ (i.e., at coordinates $(0,0,1)$) and that
$U,U'$, $\widetilde U'$, and $\widetilde U$, the images of $V,V'$, $\widetilde V'$,
and $\widetilde V$, are tetrahedra with three right angles at
the image of $\bb$, see Figure~\ref{figure_vertex_extension_2}.
We can further assume that these four tetrahedra have exactly
the same shape. Of course, ``physically'' mapping these elements could lead to
self-penetration with other patch elements. However, here, we only perform this operation
abstractly since no other elements are involved (we work in isolation).
The mapped configuration around $\bb$ is obtained by a piecewise affine bilipschitz mapping
whose norm only depends on the shape regularity parameter of $\CT_{\bzero}$, and hence,
arguing as in Lemma~\ref{lemma_equiv}, it does not change the end result.

The goal is now to extend the patch $\CTo$ into a new patch $\tCTo$ containing
all the tetrahedra $K'_j$, $\widetilde K'_j$, and $\widetilde K_j$ in $V'$, $\widetilde V'$,
and $\widetilde V$. For simplicity, we perform this extension in the mapped configurations
containing $U$, $U'$, $\widetilde U'$, and $\widetilde U$, see
Figure~\ref{figure_vertex_extension_2}. Specifically, following very closely
Corollary~\ref{corollary_tetrahedron}, we symmetrize $U$ twice.
The extension is thus performed in two steps: we first extend the patch by including
$U'$ by symmetrizing over $\{ \bx_1 = 0 \}$, and then carry out a second extension
which include $\widetilde U'$ and $\widetilde U$ with a symmetrization over
$\{ \bx_2 = 0 \}$. These symmetrizations around planes are performed as in
Definition~\ref{def_sym} and in Lemma~\ref{lemma_symmetrization}, and the process
is illustrated in Figure~\ref{figure_vertex_extension_steps}. As shown in
Figures~\ref{figure_vertex_extension}--\ref{figure_vertex_extension_2},
in terms of connectivity, the vertex patch around $\bb$ in the new patch will
contain 4 copies of the original one. Besides, the extension only introduces edges
with one interior vertex (namely, $\bb$), so that it does not introduce any new
problematic edges/vertices in $\lceil \tCTo \rceil$.

The boundary of $U$ consists of four faces. One of those, which we denote
by $\Gamma_{\rm int}$, connects $U$ to the remainder of the patch. We denote by
$\Gamma_{\rm top}$ the face that does not touch the vertex $\bzero$ (it is the one
in which Figures~\ref{figure_vertex_extension}--\ref{figure_vertex_extension_steps} are drawn).
We denote by $\Gamma_{\delta}$ and $\Gamma_{\partial}$ the other two faces of $U$.
They lie on the boundary of the patch subdomain $\omo$ and contain the vertex $\bzero$.
In what follows, we need to distinguish between cases of boundary
conditions on $\Gamma_{\delta}$ and $\Gamma_{\partial}$. To fix the ideas,
we will assume here that $\Gamma_{\delta} \subset \{ \bx_1 = 0 \}$
and $\Gamma_{\partial} \subset \{ \bx_2 = 0 \}$ as represented in
Figures~\ref{figure_vertex_extension_2}--\ref{figure_vertex_extension_steps},
although it is not important. For the sake of simplicity, we denote by
$\omega_U$ the domain covered by $U$, $U'$, $\widetilde U'$, and $\widetilde U$.
We also employ the notation $\Gamma_{\rm int}'$ for the mirror image of
$\Gamma_{\rm int}$ around $\{ \bx_1 = 0 \}$ and we denote by $\widetilde \Gamma_{\rm int}$
and $\widetilde \Gamma_{\rm int}'$ the images of $\Gamma_{\rm int}$ and $\Gamma_{\rm int}'$
when symmetrizing around $\{ \bx_2 = 0 \}$. All this notation is illustrated in
Figure~\ref{figure_vertex_extension_steps}.

{\bf Case 1.}
We first consider the case where natural boundary
conditions are considered on $\Gamma_\delta$ and $\Gamma_\partial$,
i.e. $\Gamma_\delta,\Gamma_\partial \subset \Gonat$.
In this case, we will construct an extension operator from
$\BH(\ccurl,U)$ to $\BH(\ccurl,\omega_U)$ and the corresponding
restriction operator, as well as divergence-conforming counterparts.
Crucially, the restriction operator will preserve the trace on
$\Gamma_{\rm int}$, so that it also corresponds to a restriction
operator for the whole patch. Here, using the notation of Definition~\ref{def_sym}, we set
$\LE^\bullet \eq \TM_2^\bullet \circ \TM_1^\bullet$ and
$\LR^\bullet \eq \TR_1^{\bullet} \circ \TR_2^{\bullet}$.
It should be observed that the natural boundary condition on
$\Gamma_{\delta}$ and $\Gamma_{\partial}$ is replaced by a natural
boundary condition on $\Gamma_{\rm int}'$, $\tGamma_{\rm int}'$,
and $\tGamma_{\rm int}$ in the extended patch. Hence, we let $\tGamma \eq \Gamma$
and we easily verify that both $\tGonat$ and $\tGoess$ are still connected and
have Lipschitz boundaries.

{\bf Case 2.}
We now consider the more complicated case where essential boundary conditions
are imposed everywhere on $\Gamma_{\delta}$ and $\Gamma_{\partial}$.
The goal here is to construct extension and restriction mappings operating
between $\BH_{0,\gamma_U}(\ccurl,U)$ and $\BH_{0,\gamma_{\omega_U}}(\ccurl,U)$
with $\gamma_U \eq \Gamma_\delta \cup \Gamma_\partial$ and
$\gamma_{\omega_U} \eq \Gamma_{\rm int}' \cup \widetilde \Gamma_{\rm int}' \cup \widetilde \Gamma_{\rm int}$
Because we only extend through essential boundary conditions, we can simply
use the trivial extension operator $\LE^\bullet \eq \TE_2^{\bullet} \circ \TE_1^{\bullet}$.
We must however, be careful in analyzing the restriction operator. In particular,
the trace of the function of $\Gamma_{\rm int}$ must be preserved, so as to ensure
conformity of the restricted function in the entire $\omo$.
The restriction operator is constructed by first folding over $\{ \bx_2 = 0 \}$
and then folding over $\{ \bx_1 = 0 \}$, i.e. $\LR^\bullet \eq \TF_1^\bullet \circ \TF_2^\bullet$.
Crucially, we can check that only $\widetilde \Gamma_{\rm int}$, $\widetilde \Gamma_{\rm int}'$,
and $\Gamma_{\rm int}'$ are folded over $\Gamma_{\rm int}$ in the restriction operation. Since
essential conditions are imposed on these parts of the boundary, the trace on
$\Gamma_{\rm int}$ is left intact. As in Case 1, the boundary conditions on
$\gamma_{\omega_U}$ in the extended patch are of the same type as the ones originally imposed
on $\gamma_U$. Therefore, we set $\tGamma \eq  \Gamma \setminus \gamma_U \cup \gamma_{\omega_U}$
and we see that both $\tGonat$ and $\tGoess$ are connected and have Lipschitz boundaries.

{\bf Case 3.}
We finally address the case of mixed boundary conditions.
Here, we assume that the boundary has been labeled so that
$\Gamma_{\delta}$ corresponds to the natural boundary condition,
whereas essential boundary conditions are required on $\Gamma_{\partial}$,
i.e. $\Gamma_{\delta} \subset \Gonat$ and $\Gamma_{\partial} \subset \Goess$.
We are therefore going to construct operators acting between
$\BH_{0,\gamma_U}(\ccurl,U)$ and $\BH_{0,\gamma_{\omega_U}}(\ccurl,\omega_U)$
where $\gamma_U = \Gamma_{\partial}$, and $\gamma_{\omega_U}$ is covered
by $\widetilde \Gamma_{\rm int}'$ and $\widetilde \Gamma_{\rm int}$.
As can be seen in Figure~\ref{figure_vertex_extension_steps}, it thus
means that the extended patch will have natural boundary conditions
on $\Gamma_{\rm int}'$ (instead of $\Gamma_{\delta}$ in the original
patch) so that both $\tGoess$ and $\tGonat$ remain connected with Lipschitz boundaries.
Concretely, we start by mirroring around $\{ \bx_1 = 0 \}$ and then we extend by zero around
$\{ \bx_2 = 0 \}$. Specifically, we set
$\LE^\bullet \eq  \TE_2^{\bullet} \circ \TM_1^\bullet$, and
$\LR^\bullet \eq \TR_1^\bullet \circ \TF_2^{\bullet}$.
It is important to note the second step of the extension properly works because in the
intermediate configuration $U \cup U'$, essential boundary conditions are imposed on both
$\Gamma_\partial$ and its mirror image $\Gamma_\partial'$ around $\{ \bx_1 = 0 \}$.
Similarly, the restriction operator does preserve the trace on $\Gamma_{\rm int}$,
since the fold around $\{ \bx_2 = 0 \}$ maps $\widetilde \Gamma_{\rm int}$ onto
$\Gamma_{\rm int}$.
\end{proof}

\begin{corollary}[Extension into a tetrahedron patch]
\label{corollary_extension_tetrahedron}
Every parachute patch admits an extension to a reference tetrahedron patch.
\end{corollary}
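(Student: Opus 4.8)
The plan is to reduce a general parachute patch $\CTo$ to one whose induced planar mesh $\lceil \CTo \rceil$ carries \emph{no} problematic vertex, since, as will follow from Appendix~\ref{section_triangular_representation} (via Tutte's embedding theorem), such a patch is equivalent in the sense of Definition~\ref{definition_equiv} to a reference tetrahedron patch. The reduction is carried out by eliminating the problematic vertices of $\lceil \CTo \rceil$ one at a time with Lemma~\ref{lemma_vertex_extension}, each elimination being an extension in the sense of Definition~\ref{definition_extension}; the resulting chain of extensions is then collapsed into a single one by Lemma~\ref{lem_comp}.

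In detail, I would argue by induction on the number $m$ of problematic vertices of $\lceil \CTo \rceil$, i.e., of vertices on the boundary of $\lceil \omo \rceil$ that are an endpoint of an interior edge of $\lceil \CTo \rceil$ whose other endpoint also lies on the boundary of $\lceil \omo \rceil$. Note first that, by the construction of parachute patches (Lemma~\ref{lemma_reference_parachutes}) together with the standing assumptions of Section~\ref{sec_VP}, $\Goess$ and $\Gonat$ are connected with Lipschitz boundaries, so that Lemma~\ref{lemma_vertex_extension} is applicable. In the base case $m=0$, Appendix~\ref{section_triangular_representation} provides a reference tetrahedron patch equivalent to $\CTo$ and the trivial extension $\tCTo \eq \CTo$ suffices. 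When $m \ge 1$, I would pick a problematic vertex $\bb$ and apply Lemma~\ref{lemma_vertex_extension}: this yields an extension $\tCTo^1 \supset \CTo$, again a parachute patch, for which $\bb$ now lies in the interior of $\lceil \tomo \rceil$, with $\Goess \subset \tGoess$, $\Gonat \subset \tGonat$, and both $\tGoess$ and $\tGonat$ still connected with Lipschitz boundaries. Since the edges that $\lceil \tCTo^1 \rceil$ adds relative to $\lceil \CTo \rceil$ either lie entirely on $\partial \tomo$ or have $\bb$ as an interior vertex, no new problematic vertex is created while $\bb$ has ceased to be problematic; hence $\lceil \tCTo^1 \rceil$ has at most $m-1$ problematic vertices. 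The induction hypothesis then provides an extension $\tCTo$ of $\tCTo^1$ equivalent to a reference tetrahedron patch, and Lemma~\ref{lem_comp} shows that $\tCTo$ is also an extension of $\CTo$, which closes the induction. For the application to Theorem~\ref{thm_Hc_cons} this is exactly what is needed, since the extension estimate of Lemma~\ref{lemma_vertex_extension} chains with the equivalence estimate of Lemma~\ref{lemma_equiv} and with Corollary~\ref{corollary_tetrahedron}.

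The step I expect to be the main obstacle is not the induction itself but the two ingredients it invokes, both already isolated in earlier results: (i) the bookkeeping guaranteeing that Lemma~\ref{lemma_vertex_extension} preserves the parachute structure and introduces no new problematic vertex -- so that the induction parameter $m$ genuinely drops at each step; and (ii) the base case, which rests on the graph-drawing argument of Appendix~\ref{section_triangular_representation} -- the Tutte-embedding part -- certifying that a problematic-vertex-free planar triangulation can be realized with straight edges so as to match the connectivity of a (possibly refined) reference tetrahedron patch. Once these are in hand, the remaining steps -- setting up the induction, composing the extensions via Lemma~\ref{lem_comp}, and passing through Lemma~\ref{lemma_equiv} -- are routine.
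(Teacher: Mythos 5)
Your proposal is correct and follows essentially the same route as the paper: recursively (you say inductively) eliminate the problematic vertices one at a time via Lemma~\ref{lemma_vertex_extension}, compose the resulting extensions with Lemma~\ref{lem_comp}, and invoke Proposition~\ref{proposition_triangular_representation} on the final problematic-vertex-free planar mesh to reach a reference tetrahedron patch. The only detail the paper spells out that you leave implicit is the use of the second part of Proposition~\ref{proposition_triangular_representation} to arrange the $\Ganat$ and $\Gaess$ edges onto edges of the reference triangle so that the resulting reference tetrahedron patch has unique or mixed boundary conditions in the sense of Definition~\ref{definition_tetrahedron}, as required by Corollary~\ref{corollary_tetrahedron}.
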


\begin{proof}
We recursively apply Lemma~\ref{lemma_vertex_extension} to the possible problematic vertices
until no remain. Note that the number of problematic vertices is finite and that their number
in $\tCTo$ is always diminished by at least one in comparison with $\CTo$. Finally, when there
is no problematic vertex left, we can apply Proposition~\ref{proposition_triangular_representation}
to the last $\lceil \tCTo \rceil$ to obtain a planar triangular mesh of the reference triangle.
This then first corresponds to a parachute patch of a form of a tetrahedron and finally to
a reference tetrahedron patch in the sense of Definition~\ref{definition_tetrahedron}.
More precisely, as a first possibility, $\Ganat$ is empty, i.e., $\Ga$ is empty, as in
Figure~\ref{fig_patches_1}, right. As a second possibility, $\Gaess$ is empty, i.e, $\Ga$
collects all the faces from the boundary of $\oma$ sharing the vertex $\ba$, as in
Figure~\ref{fig_patches_2}, left. In these two cases, we obtain unique boundary conditions
in the sense of Definition~\ref{definition_tetrahedron}. As a third and last possibility,
both $\Ganat$ and $\Gaess$ contain at least one face from the boundary of $\oma$ sharing
the vertex $\ba$. Then, in Proposition~\ref{proposition_triangular_representation}, we note
that we can organize all edges corresponding to, say, $\Ganat$, in one edge of the reference
triangle and all edges corresponding to $\Gaess$ in the two remaining edges of the reference
triangle. This then leads to mixed boundary conditions in the sense of
Definition~\ref{definition_tetrahedron}.
\end{proof}

\subsection{Proof of Theorem~\ref{thm_Hc_cons} for boundary patches}

We are now finally ready to prove Theorem~\ref{thm_Hc_cons} for boundary patches.

\begin{proof}[Proof of Theorem~\ref{thm_Hc_cons} for boundary patches]
First, Lemma~\ref{lemma_reference_parachutes} states that $\CTa$
is equivalent to a reference parachute patch $\hCTo \in \widehat \LT_{\kappa_{\CTa}}$,
and Lemma~\ref{lemma_equiv} ensures that
\begin{equation*}
C_{{\rm st},p,\CTa,\Gamma}
\leq
C(\kappa_{\CTa})
C_{{\rm st},p,\hCTo,\widehat \Gamma},
\end{equation*}
since $\kappa_{\hCTo}$ only depends on $\kappa_{\CTa}$.
We then follow Corollary~\ref{corollary_extension_tetrahedron} to extend $\hCTo$
into a reference tetrahedron patch $\tCTo$ with extension and restriction operator
norms only depending on $\kappa_{\hCTo}$, and hence, only on $\kappa_{\CTa}$,
so that
\begin{equation*}
C_{{\rm st},p,\hCTo,\widehat \Gamma}
\leq
C(\kappa_{\CTa})
C_{{\rm st},p,\tCTo,\widetilde \Gamma}.
\end{equation*}
Then, the result follows from Corollary~\ref{corollary_tetrahedron} which states that
\begin{equation*}
C_{{\rm st},p,\tCTo,\widetilde \Gamma}
\leq
C(\kappa_{\tCTo})
\end{equation*}
and from the fact that $C(\kappa_{\tCTo}) \leq C(\kappa_{\CTa})$.
\end{proof}

\bibliographystyle{amsplain_initials}
\bibliography{bibliography}

\appendix

\section{Mapping a two-dimensional triangular mesh into a reference triangle}
\label{section_triangular_representation}

In this appendix, we study two-dimensional triangular meshes that correspond to the planar
meshes $\lceil \CTo \rceil$ from Section~\ref{section_boundary_patches}. We will use some
basic notions from the graph theory and use Tutte's embedding theorem~\cite{tutte_1963a}.
We start be recalling the following basic notion:

\begin{definition}[Triconnected graph]\label{def_tricon}
A graph $(\LV,\LE)$, with the vertex set $\LV$ and the edge set $\LE$,
is triconnected if it has at least four vertices and if it remains
connected if any two vertices, together with its corresponding edges,
are removed from respectively $\LV$ and $\LE$. 
\end{definition}

\begin{lemma}[Triconnected graph]
\label{lemma_triconnected}
Consider a conforming planar triangular mesh $\CT$ with at least two elements $K$,
and assume that the set $\omega \subset \mathbb R^2$ covered
by the elements of $\CT$ is connected with a connected
Lipschitz boundary. Let us denote by $\CV$ and $\CE$ the sets of vertices
and edges of $\CT$ and by $\CV^{\rm ext}$ and $\CE^{\rm ext}$
the set of boundary vertices and edges of $\CT$. If all edges
$e = [\ba,\bb] \in \CE$ that have two vertices $\ba,\bb \in \CV^{\rm ext}$
are boundary edges (i.e. $e \in \CE^{\rm ext}$), then the
undirected graph $(\CV,\CE)$ is triconnected.
\end{lemma}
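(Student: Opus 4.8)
\medskip
\noindent\textbf{Proof plan.}
The plan is to prove the equivalent statement that the graph $G \eq (\CV,\CE)$ is $3$-connected in the usual graph-theoretic sense; combined with the bound $|\CV| \ge 4$, which I would establish first, this is precisely Definition~\ref{def_tricon}. For the bound: two triangles of $\CT$ meeting only in a vertex would pinch $\partial\omega$ and destroy the Lipschitz property, while two disjoint triangles would disconnect $\omega$; hence any two triangles of $\CT$ are joined by a chain of triangles each sharing a full edge with the next, and since $\CT$ has at least two elements some pair of triangles shares a full edge, which already exhibits four distinct vertices. Throughout I would use two structural facts: first, $\partial\omega$ being connected and Lipschitz it is a single Jordan curve, so $\CE^{\rm ext}$ is a simple cycle $C$ with vertex set $\CV^{\rm ext}$ and each vertex of $\CV^{\rm ext}$ is incident to exactly two edges of $\CE^{\rm ext}$; second, $\CT$ being a conforming triangular mesh, in any planar embedding of $G$ with outer face bounded by $C$ every bounded face is a triangle. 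The hypothesis of the lemma says exactly that $C$ is chordless.

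The first substantial step is to \emph{cone off} the boundary cycle: add a new apex vertex $\bw$ in the outer face and join it to every vertex of $\CV^{\rm ext}$. Since $C$ is chordless this can be done without crossings, and the outer region is thereby subdivided into triangles $\{\bw,x_i,x_{i+1}\}$ with $x_ix_{i+1}\in\CE^{\rm ext}$; hence the enlarged graph $G'$ is a simple maximal planar graph on $|\CV|+1\ge 5$ vertices, and therefore $3$-connected by the classical fact that every simple maximal planar graph on at least four vertices is $3$-connected.

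Next I would transfer $3$-connectivity from $G'$ to $G$. Assume, for contradiction, that $\{\bu,\bv\}$ separates $G$ into components $A_1,\dots,A_m$ with $m\ge2$. Each $A_i$ must meet $\CV^{\rm ext}$: otherwise no new edge of $G'$ touches $A_i$ (the new edges all run from $\bw$ to $\CV^{\rm ext}$), so $A_i$ would still be a component of $G'-\{\bu,\bv\}$, contradicting the $3$-connectivity of $G'$. Hence $\bu,\bv\in\CV^{\rm ext}$ --- if at most one of them lay on $C$, then $C$ minus these vertices would be a single path, forcing all external vertices into one $A_i$ --- and likewise $\bu$ and $\bv$ are non-adjacent on $C$; since any edge joining two vertices of $\CV^{\rm ext}$ lies on $C$, this means $\bu\bv\notin\CE$. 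Then $C-\{\bu,\bv\}$ is the disjoint union of two nonempty arcs $P_1,P_2$, and since each $A_i$ meets $\CV^{\rm ext}\subseteq V(P_1)\cup V(P_2)$ we must have $m=2$ with, after relabelling, $V(P_1)\subseteq A_1$ and $V(P_2)\subseteq A_2$.

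The contradiction comes from the local picture at $\bu$. Let $x_1,x_2$ be the two $C$-neighbours of $\bu$, so $x_1\in V(P_1)\subseteq A_1$, $x_2\in V(P_2)\subseteq A_2$, with $x_1\ne x_2$ and $x_1,x_2\ne\bv$ because $\bu\bv\notin\CE$. As $\bu$ lies on the outer cycle exactly once, the outer face occurs exactly once in the rotation of edges about $\bu$, between $[\bu,x_1]$ and $[\bu,x_2]$; writing the remaining part of the rotation as $x_1,z_1,\dots,z_k,x_2$ (possibly $k=0$), each consecutive pair of these edges bounds a bounded face, which is a triangle, so consecutive vertices in the list $x_1,z_1,\dots,z_k,x_2$ are adjacent. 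Thus $x_1z_1\cdots z_kx_2$ is a path whose vertices are all neighbours of $\bu$, hence distinct from $\bu$ and --- since $\bu\bv\notin\CE$ --- from $\bv$; it therefore lies entirely in $G-\{\bu,\bv\}$ and joins $A_1$ to $A_2$, which is impossible. Hence $G$ has no separating pair, so $G$ is $3$-connected. The step I expect to require the most care is the coning-off: one must check that adding $\bw$ genuinely produces a \emph{simple} spherical triangulation, and it is precisely here, together with the ``each $A_i$ meets $\CV^{\rm ext}$'' argument, that the hypothesis on edges between boundary vertices is used in an essential way; once $G'$ is recognised as maximal planar, its $3$-connectivity is classical and the remainder is the bookkeeping above.
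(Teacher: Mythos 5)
Your proof is correct, but it follows a genuinely different route from the paper's. The paper argues directly on the mesh: it first shows biconnectivity by observing that the link of any vertex patch is a connected subgraph of $\CE$ around which paths can be rerouted, then handles the removal of two vertices by cases (non-adjacent pairs are removed one at a time; adjacent pairs are treated via the edge patch, where the hypothesis rules out an interior edge with both endpoints on $\partial\omega$, so the edge-patch boundary stays connected). You instead reduce to a classical theorem: since the connected Lipschitz boundary makes $\CE^{\rm ext}$ a simple cycle $C$, coning off the outer face with an apex produces a simple maximal planar graph, which is $3$-connected; you then transfer $3$-connectivity back to $G$ by showing that any separating pair would have to consist of two non-adjacent boundary vertices and deriving a contradiction from the triangular fan of faces around one of them. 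Your version is arguably tighter than the paper's rather pictorial case analysis, and it isolates the hypothesis cleanly as ``$C$ is chordless''; the paper's version is more elementary and self-contained (no appeal to the Whitney/Tutte-type theorem on planar triangulations), which matches its style of reducing everything to mesh-level arguments. One small correction to your closing commentary: the chordless hypothesis is not actually needed for the coning-off step --- an apex placed in the outer face can be joined to all vertices of the outer cycle without crossings whether or not $C$ has chords, and the resulting graph is a simple triangulation in either case (indeed, the paper's Figure counterexample still cones off to a $3$-connected triangulation). The hypothesis enters only in the transfer step, exactly where you in fact use it, namely to conclude from ``$\bu,\bv$ non-adjacent on $C$'' that $\bu\bv\notin\CE$, which is what guarantees the fan path around $\bu$ avoids $\bv$.
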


\begin{remark}[Non-triconnected graph]
A counterexample showing that, in general, a triangular mesh $\CT$
does not have a triconnected graph is presented in Figure
\ref{figure_counterexample}. There, the assumption that
all edges with two boundary vertices are boundary edges is violated.
\end{remark}

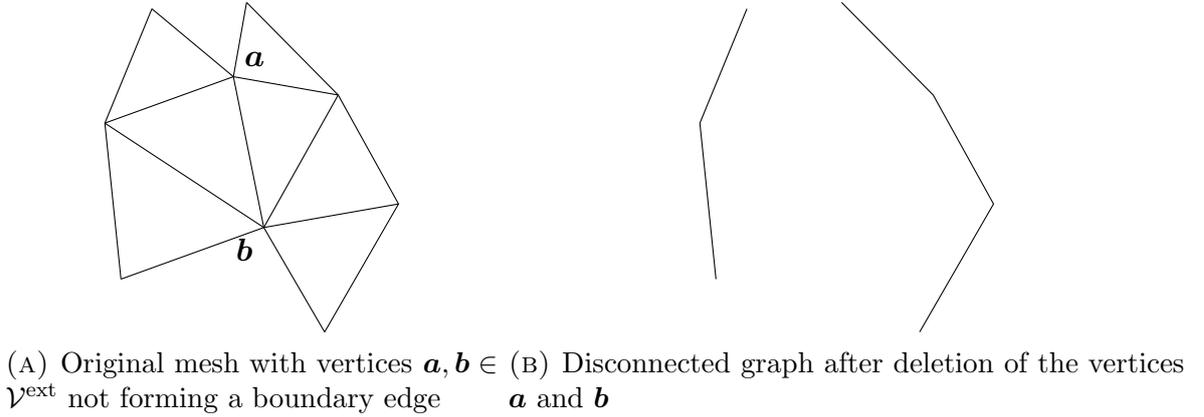
\begin{figure}
\begin{minipage}{.40\linewidth}
\centering
\begin{tikzpicture}[scale=2]

\coordinate (a) at (0.0,1.0);
\coordinate (b) at (0.2,0.0);

\coordinate (l1) at ($(a)+( 140:0.7)$);
\coordinate (l2) at ($(a)+( 200:0.9)$);
\coordinate (l3) at ($(b)+(-160:1.0)$);

\coordinate (r1) at ($(a)+( 80:0.5)$);
\coordinate (r2) at ($(a)+(-10:0.7)$);
\coordinate (r3) at ($(b)+( 10:0.9)$);
\coordinate (r4) at ($(b)+(-60:0.8)$);

\draw (a) -- (b);

\draw (a) -- (l1);
\draw (a) -- (l2);
\draw (b) -- (l2);
\draw (b) -- (l3);

\draw (l1) -- (l2) -- (l3);

\draw (a) -- (r1);
\draw (a) -- (r2);
\draw (b) -- (r2);
\draw (b) -- (r3);
\draw (b) -- (r4);

\draw (r1) -- (r2) -- (r3) -- (r4);

\draw (a) node[anchor=south west] {$\ba$};
\draw (b) node[anchor=north east] {$\bb$};
\end{tikzpicture}
\subcaption{Original mesh with vertices $\ba,\bb \in \CV^{\rm ext}$ not forming a boundary edge}
\end{minipage}
\begin{minipage}{.55\linewidth}
\centering
\begin{tikzpicture}[scale=2]

\coordinate (a) at (0.0,1.0);
\coordinate (b) at (0.2,0.0);

\coordinate (l1) at ($(a)+( 140:0.7)$);
\coordinate (l2) at ($(a)+( 200:0.9)$);
\coordinate (l3) at ($(b)+(-160:1.0)$);

\coordinate (r1) at ($(a)+( 80:0.5)$);
\coordinate (r2) at ($(a)+(-10:0.7)$);
\coordinate (r3) at ($(b)+( 10:0.9)$);
\coordinate (r4) at ($(b)+(-60:0.8)$);

\draw (l1) -- (l2) -- (l3);

\draw (r1) -- (r2) -- (r3) -- (r4);
\end{tikzpicture}
\subcaption{Disconnected graph after deletion of the vertices $\ba$ and $\bb$}
\end{minipage}
\caption{A non-triconnected graph}
\label{figure_counterexample}
\end{figure}

\begin{figure}
\begin{minipage}{.45\linewidth}
\begin{tikzpicture}[scale=2]
\coordinate (a) at (0,0);

\coordinate (a1) at ( 40:1.3);
\coordinate (a2) at (110:1.1);
\coordinate (a3) at (170:1.0);
\coordinate (a4) at (240:1.2);
\coordinate (a5) at (300:1.2);
\coordinate (a6) at (340:1.0);

\draw[white] (a1) circle (0.2);
\draw[white] (a2) circle (0.2);
\draw[white] (a6) circle (0.2);
\draw[white] (a3) circle (0.2);

\draw ($(a) + (70:0.3)$) node {$\ba$};

\draw ($(a3)-(0.3,0.0)$) node {$\bb'$};
\draw ($(a6)+(0.3,0.0)$) node {$\bc'$};

\draw (a) -- (a1);
\draw (a) -- (a2);
\draw (a) -- (a3);
\draw (a) -- (a4);
\draw (a) -- (a5);
\draw (a) -- (a6);

\draw (a1) -- (a2) -- (a3) -- (a4) -- (a5) -- (a6) -- cycle;

\draw (a3) circle (0.2);
\draw (a) circle (0.2);
\draw (a6) circle (0.2);

\draw[ultra thick] (a3) -- (a) -- (a6);

\draw[dashed] (a1) -- ($(a1)+(- 45:0.3)$);
\draw[dashed] (a1) -- ($(a1)+(  60:0.3)$);
\draw[dashed] (a1) -- ($(a1)+( 130:0.3)$);

\draw[dashed] (a2) -- ($(a2)+(  30:0.3)$);
\draw[dashed] (a2) -- ($(a2)+(  90:0.3)$);
\draw[dashed] (a2) -- ($(a2)+( 130:0.3)$);
\draw[dashed] (a2) -- ($(a2)+( 180:0.3)$);

\draw[dashed] (a3) -- ($(a3)+(  90:0.3)$);
\draw[dashed] (a3) -- ($(a3)+( 130:0.3)$);
\draw[ultra thick,dashed] (a3) -- ($(a3)+( 225:0.3)$);

\draw[dashed] (a4) -- ($(a4)+( 130:0.3)$);
\draw[dashed] (a4) -- ($(a4)+( 200:0.3)$);
\draw[dashed] (a4) -- ($(a4)+( 260:0.3)$);
\draw[dashed] (a4) -- ($(a4)+( 320:0.3)$);

\draw[dashed] (a5) -- ($(a5)+( 250:0.3)$);
\draw[dashed] (a5) -- ($(a5)+( 310:0.3)$);
\draw[dashed] (a5) -- ($(a5)+(   0:0.3)$);

\draw[dashed] (a6) -- ($(a6)+( -85:0.3)$);
\draw[ultra thick,dashed] (a6) -- ($(a6)+( -30:0.3)$);
\draw[dashed] (a6) -- ($(a6)+(  30:0.3)$);

\end{tikzpicture}
\subcaption{An interior patch with its vertex $\ba$}
\end{minipage}
\begin{minipage}{.45\linewidth}
\begin{tikzpicture}[scale=2]
\coordinate (a) at (0,0);

\coordinate (a1) at ( 40:1.3);
\coordinate (a2) at (110:1.1);
\coordinate (a3) at (170:1.0);
\coordinate (a4) at (240:1.2);
\coordinate (a5) at (300:1.2);
\coordinate (a6) at (340:1.0);

\draw (a3) circle (0.2);
\draw (a2) circle (0.2);
\draw (a1) circle (0.2);
\draw (a6) circle (0.2);

\draw ($(a3)-(0.3,0.0)$) node {$\bb'$};
\draw ($(a6)+(0.3,0.0)$) node {$\bc'$};

\draw (a1) -- (a2) -- (a3) -- (a4) -- (a5) -- (a6) -- cycle;

\draw (a3) circle (0.2);
\draw (a2) circle (0.2);
\draw (a1) circle (0.2);
\draw (a6) circle (0.2);

\draw[ultra thick] (a3) -- (a2) -- (a1) -- (a6);

\draw[dashed] (a1) -- ($(a1)+(- 45:0.3)$);
\draw[dashed] (a1) -- ($(a1)+(  60:0.3)$);
\draw[dashed] (a1) -- ($(a1)+( 130:0.3)$);

\draw[dashed] (a2) -- ($(a2)+(  30:0.3)$);
\draw[dashed] (a2) -- ($(a2)+(  90:0.3)$);
\draw[dashed] (a2) -- ($(a2)+( 130:0.3)$);
\draw[dashed] (a2) -- ($(a2)+( 180:0.3)$);

\draw[dashed] (a3) -- ($(a3)+(  90:0.3)$);
\draw[dashed] (a3) -- ($(a3)+( 130:0.3)$);
\draw[ultra thick,dashed] (a3) -- ($(a3)+( 225:0.3)$);

\draw[dashed] (a4) -- ($(a4)+( 130:0.3)$);
\draw[dashed] (a4) -- ($(a4)+( 200:0.3)$);
\draw[dashed] (a4) -- ($(a4)+( 260:0.3)$);
\draw[dashed] (a4) -- ($(a4)+( 320:0.3)$);

\draw[dashed] (a5) -- ($(a5)+( 250:0.3)$);
\draw[dashed] (a5) -- ($(a5)+( 310:0.3)$);
\draw[dashed] (a5) -- ($(a5)+(   0:0.3)$);

\draw[dashed] (a6) -- ($(a6)+( -85:0.3)$);
\draw[ultra thick,dashed] (a6) -- ($(a6)+( -30:0.3)$);
\draw[dashed] (a6) -- ($(a6)+(  30:0.3)$);

\end{tikzpicture}
\subcaption{An interior patch without its vertex $\ba$}
\end{minipage}

\begin{minipage}{.45\linewidth}
\begin{tikzpicture}[scale=2]
\coordinate (a) at (0,0);

\coordinate (a1) at (110:0.8);
\coordinate (a2) at (180:1.0);
\coordinate (a3) at (240:1.2);
\coordinate (a4) at (290:0.9);
\coordinate (a5) at (340:1.0);

\draw        (a)  circle (0.2);
\draw        (a1) circle (0.2);
\draw[white] (a2) circle (0.2);
\draw[white] (a3) circle (0.2);
\draw[white] (a4) circle (0.2);
\draw        (a5) circle (0.2);

\draw ($(a) + (70:0.3)$) node {$\ba$};

\draw ($(a1)+(0.3,0.0)$) node {$\bb'$};
\draw ($(a5)+(0.3,0.0)$) node {$\bc'$};

\draw (a) -- (a1);
\draw (a) -- (a2);
\draw (a) -- (a3);
\draw (a) -- (a4);
\draw (a) -- (a5);

\draw (a1) -- (a2) -- (a3) -- (a4) -- (a5);

\draw[ultra thick] (a1) -- (a) -- (a5);

\draw[dashed] (a1) -- ($(a1)+(  90:0.3)$);
\draw[ultra thick,dashed] (a1) -- ($(a1)+( 170:0.3)$);

\draw[dashed] (a2) -- ($(a2)+(  70:0.3)$);
\draw[dashed] (a2) -- ($(a2)+( 120:0.3)$);
\draw[dashed] (a2) -- ($(a2)+( 190:0.3)$);
\draw[dashed] (a2) -- ($(a2)+( 250:0.3)$);

\draw[dashed] (a3) -- ($(a3)+( 140:0.3)$);
\draw[dashed] (a3) -- ($(a3)+( 200:0.3)$);
\draw[dashed] (a3) -- ($(a3)+( 280:0.3)$);
\draw[dashed] (a3) -- ($(a3)+( 340:0.3)$);

\draw[dashed] (a4) -- ($(a4)+(-140:0.3)$);
\draw[dashed] (a4) -- ($(a4)+(- 50:0.3)$);
\draw[dashed] (a4) -- ($(a4)+(  10:0.3)$);

\draw[dashed] (a5) -- ($(a5)+( -90:0.3)$);
\draw[ultra thick,dashed] (a5) -- ($(a5)+(  10:0.3)$);
\end{tikzpicture}
\subcaption{A boundary patch with its vertex $\ba$}
\end{minipage}
\begin{minipage}{.45\linewidth}
\begin{tikzpicture}[scale=2]
\coordinate (a1) at (110:0.8);
\coordinate (a2) at (180:1.0);
\coordinate (a3) at (240:1.2);
\coordinate (a4) at (290:0.9);
\coordinate (a5) at (340:1.0);

\draw (a1) circle (0.2);
\draw (a2) circle (0.2);
\draw (a3) circle (0.2);
\draw (a4) circle (0.2);
\draw (a5) circle (0.2);

\draw[ultra thick] (a1) -- (a2) -- (a3) -- (a4) -- (a5);

\draw ($(a1)+(0.3,0.0)$) node {$\bb'$};
\draw ($(a5)+(0.3,0.0)$) node {$\bc'$};

\draw[dashed] (a1) -- ($(a1)+(  90:0.3)$);
\draw[ultra thick,dashed] (a1) -- ($(a1)+( 170:0.3)$);

\draw[dashed] (a2) -- ($(a2)+(  70:0.3)$);
\draw[dashed] (a2) -- ($(a2)+( 120:0.3)$);
\draw[dashed] (a2) -- ($(a2)+( 190:0.3)$);
\draw[dashed] (a2) -- ($(a2)+( 250:0.3)$);

\draw[dashed] (a3) -- ($(a3)+( 140:0.3)$);
\draw[dashed] (a3) -- ($(a3)+( 200:0.3)$);
\draw[dashed] (a3) -- ($(a3)+( 280:0.3)$);
\draw[dashed] (a3) -- ($(a3)+( 340:0.3)$);

\draw[dashed] (a4) -- ($(a4)+(-140:0.3)$);
\draw[dashed] (a4) -- ($(a4)+(- 50:0.3)$);
\draw[dashed] (a4) -- ($(a4)+(  10:0.3)$);

\draw[dashed] (a5) -- ($(a5)+( -90:0.3)$);
\draw[ultra thick,dashed] (a5) -- ($(a5)+(  10:0.3)$);
\end{tikzpicture}
\subcaption{A boundary patch without its vertex~$\ba$}
\end{minipage}

\caption
{
Vertex patches: Examples of path joining vertices before and after the vertex has been removed
}
\label{figure_vertex_patches}
\end{figure}
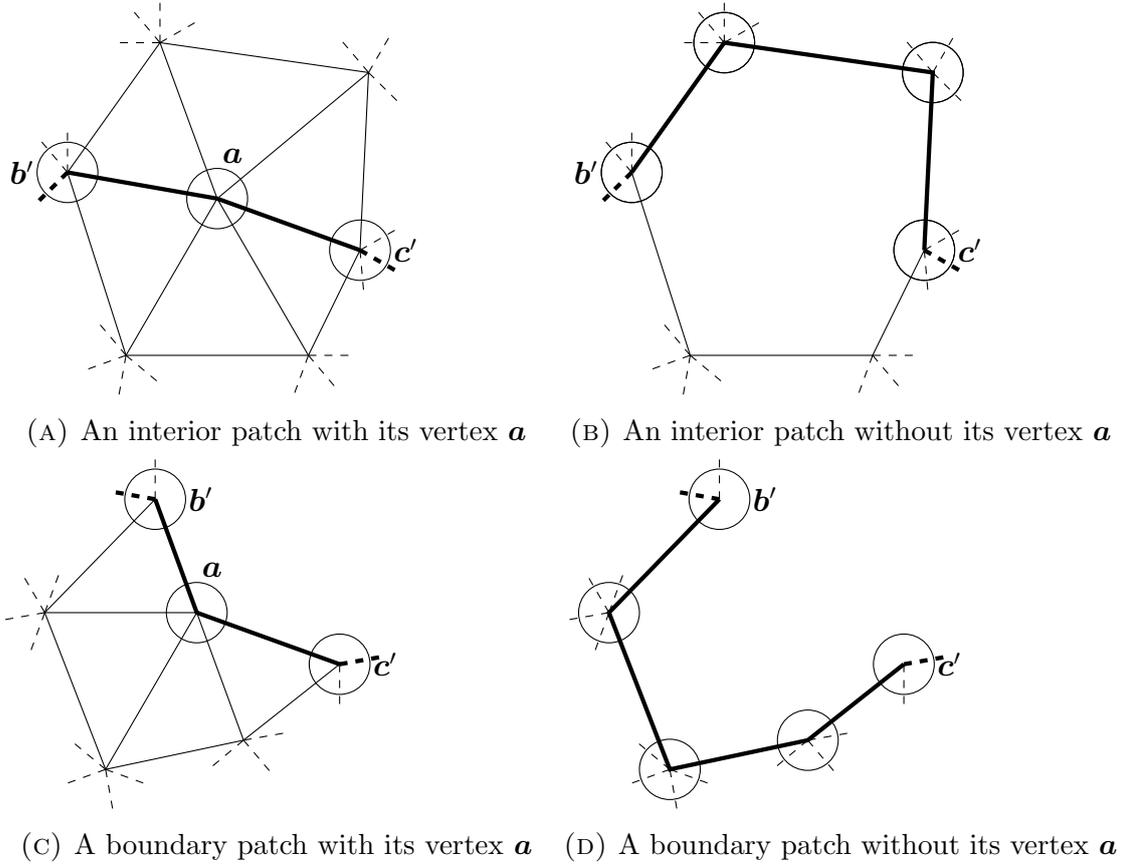

\begin{figure}
\begin{minipage}{.45\linewidth}
\begin{tikzpicture}[scale=2]
\coordinate (a) at (0,1);
\coordinate (b) at (0,0);

\coordinate (a1) at ($(a)+(-30:1.2)$);
\coordinate (a2) at ($(a)+(20:0.8)$);
\coordinate (a3) at ($(a)+(100:1.0)$);

\coordinate (b1) at ($(a)+(190:0.9)$);
\coordinate (b2) at ($(b)+(210:1.1)$);
\coordinate (b3) at ($(b)+(260:0.8)$);
\coordinate (b4) at ($(b)+(320:0.8)$);
\coordinate (b5) at ($(a)+(-30:1.2)$);

\draw[dashed] (a1) -- ($(a1)+( -80:0.3)$);
\draw[dashed] (a1) -- ($(a1)+(  10:0.3)$);
\draw[dashed] (a1) -- ($(a1)+(  80:0.3)$);

\draw[dashed] (a2) -- ($(a2)+(- 10:0.3)$);
\draw[dashed] (a2) -- ($(a2)+(  80:0.3)$);

\draw[ultra thick,dashed] (a3) -- ($(a3)+(  30:0.3)$);
\draw[dashed] (a3) -- ($(a3)+( 135:0.3)$);

\draw[dashed] (b1) -- ($(b1)+(  90:0.3)$);
\draw[dashed] (b1) -- ($(b1)+( 150:0.3)$);
\draw[dashed] (b1) -- ($(b1)+( 220:0.3)$);

\draw[dashed] (b2) -- ($(b2)+( 120:0.3)$);
\draw[dashed] (b2) -- ($(b2)+( 180:0.3)$);
\draw[dashed] (b2) -- ($(b2)+( 230:0.3)$);
\draw[dashed] (b2) -- ($(b2)+( 300:0.3)$);

\draw[dashed] (b3) -- ($(b3)+(-150:0.3)$);
\draw[dashed] (b3) -- ($(b3)+(- 90:0.3)$);
\draw[dashed] (b3) -- ($(b3)+(- 30:0.3)$);

\draw[dashed] (b4) -- ($(b4)+(-120:0.3)$);
\draw[ultra thick,dashed] (b4) -- ($(b4)+(- 60:0.3)$);
\draw[dashed] (b4) -- ($(b4)+(  30:0.3)$);

\draw ($(a)+(135:0.3)$) node {$\ba$};
\draw ($(b)+(180:0.3)$) node {$\bb$};

\draw ($(a3)+(180:0.3)$) node {$\ba'$};
\draw ($(b4)+(  0:0.3)$) node {$\bb'$};

\draw (b) -- (a);

\draw (a) -- (a1);
\draw (a) -- (a2);
\draw (a) -- (a3);
\draw (a) -- (b1);

\draw (b) -- (b1);
\draw (b) -- (b2);
\draw (b) -- (b3);
\draw (b) -- (b4);
\draw (b) -- (b5);

\draw (a1) -- (a2) -- (a3) -- (b1) -- (b2) -- (b3) -- (b4) -- (b5) -- cycle;

\draw (b) circle (0.2);
\draw (a) circle (0.2);
\draw (a3) circle (0.2);
\draw (b4) circle (0.2);

\draw[ultra thick] (a3) -- (a) -- (b) -- (b4);
\end{tikzpicture}
\end{minipage}
\begin{minipage}{.45\linewidth}
\begin{tikzpicture}[scale=2]
\coordinate (a) at (0,1);
\coordinate (b) at (0,0);

\coordinate (a1) at ($(a)+(-30:1.2)$);
\coordinate (a2) at ($(a)+(20:0.8)$);
\coordinate (a3) at ($(a)+(100:1.0)$);

\coordinate (b1) at ($(a)+(190:0.9)$);
\coordinate (b2) at ($(b)+(210:1.1)$);
\coordinate (b3) at ($(b)+(260:0.8)$);
\coordinate (b4) at ($(b)+(320:0.8)$);
\coordinate (b5) at ($(a)+(-30:1.2)$);

\draw[dashed] (a1) -- ($(a1)+( -80:0.3)$);
\draw[dashed] (a1) -- ($(a1)+(  10:0.3)$);
\draw[dashed] (a1) -- ($(a1)+(  80:0.3)$);

\draw[dashed] (a2) -- ($(a2)+(- 10:0.3)$);
\draw[dashed] (a2) -- ($(a2)+(  80:0.3)$);

\draw[ultra thick,dashed] (a3) -- ($(a3)+(  30:0.3)$);
\draw[dashed] (a3) -- ($(a3)+( 135:0.3)$);

\draw[dashed] (b1) -- ($(b1)+(  90:0.3)$);
\draw[dashed] (b1) -- ($(b1)+( 150:0.3)$);
\draw[dashed] (b1) -- ($(b1)+( 220:0.3)$);

\draw[dashed] (b2) -- ($(b2)+( 120:0.3)$);
\draw[dashed] (b2) -- ($(b2)+( 180:0.3)$);
\draw[dashed] (b2) -- ($(b2)+( 230:0.3)$);
\draw[dashed] (b2) -- ($(b2)+( 300:0.3)$);

\draw[dashed] (b3) -- ($(b3)+(-150:0.3)$);
\draw[dashed] (b3) -- ($(b3)+(- 90:0.3)$);
\draw[dashed] (b3) -- ($(b3)+(- 30:0.3)$);

\draw[dashed] (b4) -- ($(b4)+(-120:0.3)$);
\draw[ultra thick,dashed] (b4) -- ($(b4)+(- 60:0.3)$);
\draw[dashed] (b4) -- ($(b4)+(  30:0.3)$);

\draw ($(a3)+(180:0.3)$) node {$\ba'$};
\draw ($(b4)+(  0:0.3)$) node {$\bb'$};

\draw (a1) -- (a2) -- (a3) -- (b1) -- (b2) -- (b3) -- (b4) -- (b5) -- cycle;

\draw (b4) circle (0.2);
\draw (a1) circle (0.2);
\draw (a2) circle (0.2);
\draw (a3) circle (0.2);

\draw[ultra thick] (b4) -- (a1) -- (a2) -- (a3);
\end{tikzpicture}
\end{minipage}
\caption
{
Edge $[\ba,\bb]$ such that both $\ba$, $\bb$ lie in $\CV^{\rm int}$: example of path joining two vertices before and after the edge is removed.
}
\label{figure_interior_edge}
\end{figure}

\begin{figure}
\begin{minipage}{.45\linewidth}
\begin{tikzpicture}[scale=2]
\coordinate (b) at (0,0);
\coordinate (a) at (0,1);

\coordinate (a1) at ($(a)+( 190:0.9)$);
\coordinate (a2) at ($(a)+(- 30:1.2)$);
\coordinate (a3) at ($(a)+(  20:0.8)$);

\coordinate (b1) at ($(b)+( 210:1.1)$);
\coordinate (b2) at ($(b)+( 260:0.8)$);
\coordinate (b3) at ($(b)+( 320:0.8)$);

\draw[white] (a1) circle (0.2);
\draw[white] (a2) circle (0.2);
\draw[white] (b1) circle (0.2);
\draw[white] (b2) circle (0.2);
\draw[white] (b3) circle (0.2);

\draw[ultra thick,dashed] (a1) -- ($(a1)+( 120:0.3)$);
\draw[dashed] (a1) -- ($(a1)+( 220:0.3)$);

\draw[dashed] (b1) -- ($(b1)+( 120:0.3)$);
\draw[dashed] (b1) -- ($(b1)+( 210:0.3)$);
\draw[dashed] (b1) -- ($(b1)+( 280:0.3)$);

\draw[dashed] (b2) -- ($(b2)+( 210:0.3)$);
\draw[dashed] (b2) -- ($(b2)+( 310:0.3)$);

\draw[dashed] (b3) -- ($(b3)+( 240:0.3)$);
\draw[dashed] (b3) -- ($(b3)+( 310:0.3)$);
\draw[dashed] (b3) -- ($(b3)+(  10:0.3)$);

\draw[dashed] (a2) -- ($(a2)+( -60:0.3)$);
\draw[ultra thick,dashed] (a2) -- ($(a2)+(  45:0.3)$);

\draw[dashed] (a3) -- ($(a3)+( -20:0.3)$);
\draw[dashed] (a3) -- ($(a3)+(  60:0.3)$);

\draw ($(a)+(0.0,0.3)$) node {$\ba$};
\draw ($(b)-(0.3,0.0)$) node {$\bb$};

\draw ($(a)-(0.3,0.0)+( 190:0.9)$) node {$\ba'$};
\draw ($(a)+(- 30:1.2)+(0.3,0.0)$) node {$\bb'$};

\draw (b) -- (a);

\draw (a) -- (a1);
\draw (a) -- (a2);
\draw (a) -- (a3);

\draw (b) -- (a1);
\draw (b) -- (b1);
\draw (b) -- (b2);
\draw (b) -- (b3);
\draw (b) -- (a2);

\draw (a1) -- (b1) -- (b2) -- (b3) -- (a2) -- (a3);

\draw (a)  circle (0.2);
\draw (b)  circle (0.2);
\draw (a1) circle (0.2);
\draw (a2) circle (0.2);

\draw[ultra thick] (a1) -- (a) -- (b) -- (a2);
\end{tikzpicture}
\end{minipage}
\begin{minipage}{.45\linewidth}
\begin{tikzpicture}[scale=2]
\coordinate (b) at (0,0);
\coordinate (a) at (0,1);

\coordinate (a1) at ($(a)+( 190:0.9)$);
\coordinate (a2) at ($(a)+(- 30:1.2)$);
\coordinate (a3) at ($(a)+(  20:0.8)$);

\coordinate (b1) at ($(b)+( 210:1.1)$);
\coordinate (b2) at ($(b)+( 260:0.8)$);
\coordinate (b3) at ($(b)+( 320:0.8)$);

\draw[ultra thick,dashed] (a1) -- ($(a1)+( 120:0.3)$);
\draw[dashed] (a1) -- ($(a1)+( 220:0.3)$);

\draw[dashed] (b1) -- ($(b1)+( 120:0.3)$);
\draw[dashed] (b1) -- ($(b1)+( 210:0.3)$);
\draw[dashed] (b1) -- ($(b1)+( 280:0.3)$);

\draw[dashed] (b2) -- ($(b2)+( 210:0.3)$);
\draw[dashed] (b2) -- ($(b2)+( 310:0.3)$);

\draw[dashed] (b3) -- ($(b3)+( 240:0.3)$);
\draw[dashed] (b3) -- ($(b3)+( 310:0.3)$);
\draw[dashed] (b3) -- ($(b3)+(  10:0.3)$);

\draw[dashed] (a2) -- ($(a2)+( -60:0.3)$);
\draw[ultra thick,dashed] (a2) -- ($(a2)+(  45:0.3)$);

\draw (a1) circle (0.2);
\draw (a2) circle (0.2);
\draw (b1) circle (0.2);
\draw (b2) circle (0.2);
\draw (b3) circle (0.2);

\draw (a1) -- (b1) -- (b2) -- (b3) -- (a2) -- (a3);
\draw[ultra thick] (a1) -- (b1) -- (b2) -- (b3) -- (a2);

\draw ($(a1)-(0.3,0.0)$) node {$\ba'$};
\draw ($(a2)+(0.3,0.0)$) node {$\bb'$};
\end{tikzpicture}
\end{minipage}
\caption
{
Edge $[\ba,\bb]$ such that $\bb$ lies in $\CV^{\rm int}$ and $\ba$ lies in $\CV^{\rm ext}$: example of path joining two vertices before and after the edge is removed.
}
\label{figure_edge_one_vertex}
\end{figure}

\begin{figure}
\begin{minipage}{.45\linewidth}
\begin{tikzpicture}[scale=2]

\draw[white] (-1.25,-0.5) rectangle (1.5,1.75);

\coordinate (a) at (0.0,1.0);
\coordinate (b) at (1.0,1.2);

\coordinate (a1) at ($(a) + (140:0.9)$);
\coordinate (a2) at ($(a) + (210:0.7)$);
\coordinate (a3) at ($(a) + (300:1.0)$);

\coordinate (b1) at ($(b) + (-75:1.0)$);

\draw (a) -- (b);

\draw (a) -- (a1);
\draw (a) -- (a2);
\draw (a) -- (a3);

\draw (b) -- (a3);
\draw (b) -- (b1);

\draw (a1) -- (a2) -- (a3) -- (b1);

\draw (a)  circle (0.2);
\draw (b)  circle (0.2);
\draw (a2) circle (0.2);
\draw (a3) circle (0.2);

\draw ($(a)+(0.0,0.2)$) node[anchor=south] {$\ba$};
\draw ($(b)+(0.0,0.2)$) node[anchor=south] {$\bb$};

\draw ($(a2)-(0.2,0.0)$) node[anchor=east] {$\ba'$};
\draw ($(a3)+(-135:0.2)$) node[anchor=north east] {$\bb'$};

\draw[ultra thick] (a2) -- (a) -- (b) -- (a3);

\draw[ultra thick,dashed] (a2) -- ($(a2)+(-140:0.3)$);
\draw[ultra thick,dashed] (a3) -- ($(a3)+(- 80:0.3)$);

\draw[dashed] (a1) -- ($(a1) + ( 220:0.3)$);
\draw[dashed] (a2) -- ($(a2) + ( 160:0.3)$);
\draw[dashed] (a2) -- ($(a2) + (- 60:0.3)$);
\draw[dashed] (a3) -- ($(a3) + (-150:0.3)$);
\draw[dashed] (b1) -- ($(b1) + (  45:0.3)$);
\draw[dashed] (b1) -- ($(b1) + (- 45:0.3)$);
\draw[dashed] (b1) -- ($(b1) + (-120:0.3)$);
\draw[dashed] (b)  -- ($(b)  + (  70:0.3)$);
\draw[dashed] (b)  -- ($(b)  + (- 10:0.3)$);

\end{tikzpicture}
\end{minipage}
\begin{minipage}{.45\linewidth}
\begin{tikzpicture}[scale=2]

\draw[white] (-1.25,-0.5) rectangle (1.5,1.75);

\coordinate (a) at (0.0,1.0);
\coordinate (b) at (1.0,1.2);

\coordinate (a1) at ($(a) + (140:0.9)$);
\coordinate (a2) at ($(a) + (210:0.7)$);
\coordinate (a3) at ($(a) + (300:1.0)$);

\coordinate (b1) at ($(b) + (-75:1.0)$);

%
%

\draw (a1) -- (a2) -- (a3) -- (b1);

\draw (a2) circle (0.2);
\draw (a3) circle (0.2);


\draw ($(a2)-(0.2,0.0)$) node[anchor=east] {$\ba'$};
\draw ($(a3)+(-135:0.2)$) node[anchor=north east] {$\bb'$};

\draw[ultra thick] (a2) -- (a3);

\draw[ultra thick,dashed] (a2) -- ($(a2)+(-140:0.3)$);
\draw[ultra thick,dashed] (a3) -- ($(a3)+(- 80:0.3)$);

\draw[dashed] (a1) -- ($(a1) + ( 220:0.3)$);
\draw[dashed] (a2) -- ($(a2) + ( 160:0.3)$);
\draw[dashed] (a2) -- ($(a2) + (- 60:0.3)$);
\draw[dashed] (a3) -- ($(a3) + (-150:0.3)$);
\draw[dashed] (b1) -- ($(b1) + (  45:0.3)$);
\draw[dashed] (b1) -- ($(b1) + (- 45:0.3)$);
\draw[dashed] (b1) -- ($(b1) + (-120:0.3)$);

\end{tikzpicture}
\end{minipage}
\caption
{
Edge $[\ba,\bb]$ such that both $\ba$, $\bb$ lie in $\CV^{\rm ext}$: example of path joining two vertices before and after the edge is removed.
}
\label{figure_exterior_edge}
\end{figure}

\begin{proof}
Notice first that the assumption that $\CT$ contains at least two elements ensures that
$\CV$ contains at least four vertices, so that we fit in the context of Definition~\ref{def_tricon}.
Considering a mesh $\CT$ satisfying the assumptions above, we need to show that if we remove
any pair of vertices $\ba,\bb \in \CV$, the associated graph $(\CV,\CE)$ remains connected.
It means that if $\bc,\bd \in \CV$ are two other points, we can join $\bc$ to $\bd$
via a sequence of edges without passing through $\ba$ or $\bb$.

For the sake of clarity, we use the additional notation
$\CV^{\rm int} \eq \CV \setminus \CV^{\rm ext}$ and
$\CE^{\rm int} \eq \CE \setminus \CE^{\rm ext}$ in the reminder of the proof.
Let us first consider the operation of removing a single
vertex $\ba \in \CV$. Then, since the set $\omega \subset \mathbb R^2$
covered by $\CT$ has a Lipschitz boundary, the patch of elements
$K \in \CT$ sharing the vertex $\ba$ either forms an open domain around
$\ba$ when $\ba \in \CV^{\rm int}$, or it forms an open domain with
$\ba$ on the boundary if $\ba \in \CV^{\rm ext}$. In both
cases, the boundary of the vertex patch is connected and consists of a
subset of $\CE$, and this property remains true after the vertex $\ba$
has been removed. This is illustrated in Figure~\ref{figure_vertex_patches}.
If there exists a path joining two vertices $\bb,\bc \in \CV \setminus \{\ba\}$
through $\ba$, then the path must go through two vertices $\bb',\bc' \in \CV$
on the boundary of the vertex patch surrounding $\ba$. Once $\ba$ is removed,
$\bb'$ and $\bc'$ can still be connected via remaining edges on the boundary
of the patch. This shows the graph of the mesh is biconnected: it remains
connected after we remove one vertex. Here, we need to show that the graph
is triconnected, meaning that it remains connected after two vertices have
been removed.

Let us first consider the case where the two vertices $\ba,\bb \in \CV$
removed from the graph do not share an edge, i.e. $[\ba,\bb] \not \in \CE$.
Then, we can first remove, say, $\ba$ and apply the reasoning above to
show that the graph remains connected. Because there is no edge connecting
$\ba$ and $\bb$, the vertex patch around $\bb$ remains untouched after the
deletion of $\ba$. As a result, the reasoning presented above still applies,
and the graph remains connected.

We therefore only need to consider cases where we remove two vertices
$\ba,\bb \in \CV$ such that $e = [\ba,\bb] \in \CE$. We then have to consider
two cases, either $e \in \CE^{\rm int}$ or not.
If $e \in \CE^{\rm int}$, due to our assumptions, then either one or two vertices are
interior, and in either case, the boundary of edge patch remains
connected after the edge is removed. This is depicted on Figures
\ref{figure_interior_edge} and~\ref{figure_edge_one_vertex}. In
both cases, if $\bc,\bd \in \CV \setminus \{\ba,\bb\}$ are two
vertices connected through $\ba$ or $\bb$, we can still connect
them through a path going around the boundary of the edge patch.
We finally consider the case where $e \in \CE^{\rm ext}$. In this
case too, the boundary of the edge patch is connected, and it remains
connected after the edge is removed. The process of modifying a path
going through $e = [\ba,\bb] \in \CE^{\rm ext}$ after it is removed is
shown in Figure~\ref{figure_exterior_edge}.
\end{proof}

\begin{proposition}[Mapping a two-dimensional triangular mesh into a reference triangle]
\label{proposition_triangular_representation}
Consider a triangular mesh $\CT$ covering a domain $\omega \subset \mathbb R^2$
and either composed of a single element $K$ or satisfying the assumptions of
Lemma~\ref{lemma_triconnected}. Then, there exists a bilipschitz mapping $\Psi$
from $\overline{\omega}$ to the reference triangle
$\widehat T \eq \{(y_1,y_2) \in [0,1]^2 \; | \; y_1+y_2 \leq 1\;\}$ such that $\Psi|_K$
is affine for each $K \in \CT$. In addition, if $\{\Gamma^\flat,\Gamma^\sharp\}$ is a partition
of $\partial \omega$ into connected components consisting of entire edges, then we can always
choose the mapping $\Psi$ so that $\Psi(\overline{\Gamma^\flat}) = \widehat E$ or
$\Psi(\overline{\Gamma^\sharp}) = \widehat E$, with
$\widehat E = \{ (y_1,y_2) \in [0,1]^2 \; | \; y_1+y_2 = 1 \}$.
\end{proposition}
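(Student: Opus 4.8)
I would treat the single-element case separately and reduce the general case to Tutte's barycentric embedding theorem~\cite{tutte_1963a}. If $\CT$ consists of a single element $K$, then $K$ has three edges and the partition $\{\Gamma^\flat,\Gamma^\sharp\}$ places one of its two boundary arcs on a single edge and the other on two edges; the affine bijection $K\to\widehat T$ sending the vertices of $K$ to $(0,0),(1,0),(0,1)$ with the one-edge arc mapped to $\widehat E$ (or, realizing the ``or'' in the statement, the two-edge arc mapped onto $\widehat E$) does the job.

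\textbf{Reduction to Tutte's theorem.} In the general case, Lemma~\ref{lemma_triconnected} tells us that the mesh graph $G=(\CV,\CE)$ is simple, planar and $3$-connected, and since $\omega$ has a connected Lipschitz boundary the outer face of $G$ is bounded by a simple cycle $C$ running through $\CV^{\rm ext}$ in cyclic order. I first fix the positions of the boundary vertices on $\partial\widehat T$. Let $\overline{\Gamma^\flat}$ and $\overline{\Gamma^\sharp}$ be the two closed boundary arcs of the partition; since $C$ has at least three vertices, at least one of the two arcs contains a vertex of $\CV^{\rm ext}$ in its relative interior, and after possibly exchanging the names of $\Gamma^\flat$ and $\Gamma^\sharp$ I may assume this is $\overline{\Gamma^\sharp}$. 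I send the two common endpoints of the arcs to $(1,0)$ and $(0,1)$, place the relative-interior vertices of $\overline{\Gamma^\flat}$ in cyclic order at distinct points of the open hypotenuse $\widehat E$, and place the relative-interior vertices of $\overline{\Gamma^\sharp}$ in cyclic order on the two legs of $\widehat T$, one of them at the corner $(0,0)$. This defines an injection $\CV^{\rm ext}\to\partial\widehat T$ whose image is a convex (in general only weakly convex) polygon with underlying point set $\partial\widehat T$, and with $\overline{\Gamma^\flat}$ mapped bijectively onto $\widehat E$. I then define $\Psi$ on the interior vertices of $\CT$ by Tutte's rule, i.e.\ by placing each at the barycenter of the images of its neighbours, and extend $\Psi$ affinely on every $K\in\CT$. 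Tutte's theorem then yields that $\Psi$ is a homeomorphism of $\overline\omega$ onto the region bounded by the chosen polygon, namely $\widehat T$, that every image triangle is nondegenerate and positively oriented, and by construction $\Psi|_K$ is affine and the boundary assignment is as required. Since only finitely many nondegenerate affine pieces are involved, $\Psi$ and $\Psi^{-1}$ are Lipschitz.

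\textbf{Main obstacle.} The delicate point is that the target boundary polygon is only weakly convex --- the vertices I put on one edge of $\widehat T$ are collinear --- whereas the classical statement of Tutte's theorem asks for a strictly convex outer polygon. I would handle this with a discrete maximum principle, using crucially the hypothesis of Lemma~\ref{lemma_triconnected} that no interior mesh edge has both endpoints on $\partial\omega$. For an edge $E'$ of $\widehat T$, let $\lambda_{E'}$ be the (barycentric) affine functional vanishing on $E'$ and equal to $1$ at the opposite vertex. Then $h\eq\lambda_{E'}\circ\Psi$ is $\ge 0$ on $\CV$, is discrete harmonic at interior vertices (an affine image of a barycentric average is the average of the images), and among boundary vertices vanishes exactly on the connected arc mapped onto $E'$, which is a \emph{proper} arc of $C$. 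If $h$ vanished at some interior vertex, the zero set $Z$ of $h$ would contain, in one connected component, that vertex together with the whole arc; since $Z$ is closed under taking neighbours of its interior vertices, the union $R$ of all triangles with the three vertices in that component would be a proper nonempty subdomain whose topological boundary consists of boundary mesh edges and of interior mesh edges with two endpoints on $\partial\omega$ --- but the latter are forbidden by the hypothesis, so $\partial R\subseteq\partial\omega$, forcing $R=\overline\omega$, a contradiction. Hence $h>0$ at every interior vertex for each of the three edges $E'$, i.e.\ $\Psi$ maps the interior vertices into the open triangle $\widehat T^\circ$; in particular no interior vertex lands on the line supporting a weakly convex edge, which is precisely the hypothesis under which the standard proof of Tutte's theorem goes through and delivers the nondegeneracy and injectivity used above. (Alternatively, one may simply invoke the extension of Tutte's theorem to $3$-connected graphs with a weakly convex outer polygon.)
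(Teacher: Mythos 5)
Your proof follows essentially the same route as the paper: dispose of the single-element case by an affine map, use Lemma~\ref{lemma_triconnected} to obtain triconnectivity, place the boundary cycle on $\partial\widehat T$ with one of the two boundary arcs sent onto the hypotenuse $\widehat E$ and the other onto the two legs, and invoke a Tutte-type embedding to position the interior vertices, the resulting piecewise affine homeomorphism being bilipschitz since only finitely many nondegenerate affine pieces occur. The one genuine difference is the treatment of the fact that the outer polygon inscribed in $\partial\widehat T$ is only weakly convex: the paper discharges this by citing the star-shaped-polygon extension of Tutte's theorem \cite[Theorem~10]{hong_nagamochi_2008a}, whereas you give a self-contained discrete maximum principle argument showing that, precisely because Lemma~\ref{lemma_triconnected} forbids interior edges with both endpoints on $\partial\omega$ (and hence also triangles with all three vertices on $\partial\omega$), every interior vertex lands in the open triangle and no face can collapse onto a side of $\widehat T$. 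This is a worthwhile addition, since the weak convexity is exactly where a naive appeal to Tutte's original (strictly convex) theorem would fail, and the paper's citation is doing real work at that point. One cosmetic slip: in the single-element case, your parenthetical alternative of mapping the two-edge arc onto $\widehat E$ is not achievable by an affine map of a triangle; it is also not needed, since the statement only requires one of the two options.
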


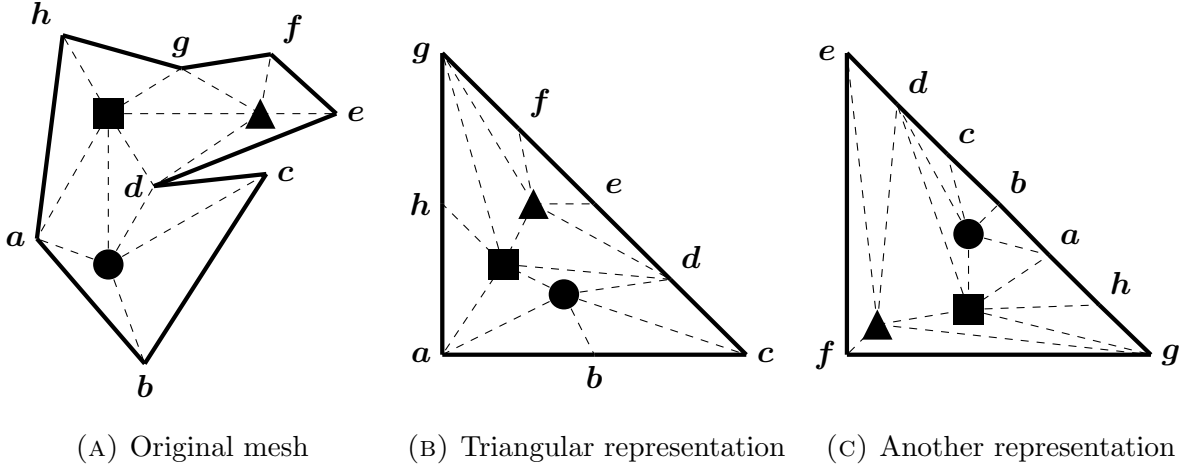
\begin{figure}
\begin{minipage}{.32\linewidth}
\begin{tikzpicture}[scale=2]

\draw[white] (-0.7,-1.0) rectangle (1.7,1.8);

\coordinate (a) at (0,0);
\coordinate (b) at (1,1);
\coordinate (c) at (0,1);

\coordinate (a1) at ($(a)+(-200:0.5)$);
\coordinate (a2) at ($(a)+(- 70:0.7)$);
\coordinate (a3) at ($(a)+(  30:1.2)$);
\coordinate (a4) at ($(a)+(  60:0.6)$);

\coordinate (b1) at ($(b)+(   0:0.5)$);
\coordinate (b2) at ($(b)+(  80:0.4)$);
\coordinate (b3) at ($(b)+( 150:0.6)$);

\coordinate (c1) at ($(c)+( 120:0.6)$);

\draw (a1) node[anchor=east] {$\ba$};
\draw (a2) node[anchor=north] {$\bb$};
\draw (a3) node[anchor=west] {$\bc$};
\draw (a4) node[anchor=east] {$\bd$};
\draw (b1) node[anchor=west] {$\be$};
\draw (b2) node[anchor=south west] {${\boldsymbol f}$};
\draw (b3) node[anchor=south] {$\bg$};
\draw (c1) node[anchor=south east] {$\bh$};

\draw[dashed] (a) -- (c);
\draw[dashed] (b) -- (c);

\draw[dashed] (a) -- (a1);
\draw[dashed] (a) -- (a2);
\draw[dashed] (a) -- (a3);
\draw[dashed] (a) -- (a4);

\draw[dashed] (b) -- (a4);
\draw[dashed] (b) -- (b1);
\draw[dashed] (b) -- (b2);
\draw[dashed] (b) -- (b3);

\draw[dashed] (c) -- (a1);
\draw[dashed] (c) -- (a4);
\draw[dashed] (c) -- (b3);
\draw[dashed] (c) -- (c1);

\draw[ultra thick] (a1) -- (a2);
\draw[ultra thick] (a2) -- (a3);
\draw[ultra thick] (a3) -- (a4);
\draw[ultra thick] (a4) -- (b1);
\draw[ultra thick] (b1) -- (b2);
\draw[ultra thick] (b2) -- (b3);
\draw[ultra thick] (b3) -- (c1);
\draw[ultra thick] (c1) -- (a1);

\fill (a) circle (0.1);
\fill ($(c)-(0.1,0.1)$) rectangle ($(c)+(0.1,0.1)$);
\fill ($(b)+(-0.1,-0.1)$) -- ($(b)+(0.1,-0.1)$) -- ($(b)+(0.0,0.1)$) -- cycle;

\end{tikzpicture}
\subcaption{Original mesh}
\label{figure_triangular_representation_original}
\end{minipage}
\begin{minipage}{.32\linewidth}
\begin{tikzpicture}[scale=2]

\draw[white] (-0.2,-0.4) rectangle (2.2,2.4);

\coordinate (a) at (0.80,0.40);
\coordinate (b) at (0.60,1.00);
\coordinate (c) at (0.40,0.60);

\coordinate (a1) at (0.00,0.00);
\coordinate (a2) at (1.00,0.00);
\coordinate (a3) at (2.00,0.00);
\coordinate (a4) at (1.50,0.50);
\coordinate (b1) at (1.00,1.00);
\coordinate (b2) at (0.50,1.50);
\coordinate (b3) at (0.00,2.00);
\coordinate (c1) at (0.00,1.00);

\draw (a1) node[anchor=east] {$\ba$};
\draw (a2) node[anchor=north] {$\bb$};
\draw (a3) node[anchor=west] {$\bc$};
\draw (a4) node[anchor=south west] {$\bd$};
\draw (b1) node[anchor=south west] {$\be$};
\draw (b2) node[anchor=south west] {${\boldsymbol f}$};
\draw (b3) node[anchor=east] {$\bg$};
\draw (c1) node[anchor=east] {$\bh$};

\draw[dashed] (a) -- (c);
\draw[dashed] (b) -- (c);

\draw[dashed] (a) -- (a1);
\draw[dashed] (a) -- (a2);
\draw[dashed] (a) -- (a3);
\draw[dashed] (a) -- (a4);

\draw[dashed] (b) -- (a4);
\draw[dashed] (b) -- (b1);
\draw[dashed] (b) -- (b2);
\draw[dashed] (b) -- (b3);

\draw[dashed] (c) -- (a1);
\draw[dashed] (c) -- (a4);
\draw[dashed] (c) -- (b3);
\draw[dashed] (c) -- (c1);

\draw[ultra thick] (a1) -- (a2);
\draw[ultra thick] (a2) -- (a3);
\draw[ultra thick] (a3) -- (a4);
\draw[ultra thick] (a4) -- (b1);
\draw[ultra thick] (b1) -- (b2);
\draw[ultra thick] (b2) -- (b3);
\draw[ultra thick] (b3) -- (c1);
\draw[ultra thick] (c1) -- (a1);

\fill (a) circle (0.1);
\fill ($(c)-(0.1,0.1)$) rectangle ($(c)+(0.1,0.1)$);
\fill ($(b)+(-0.1,-0.1)$) -- ($(b)+(0.1,-0.1)$) -- ($(b)+(0.0,0.1)$) -- cycle;

\end{tikzpicture}
\subcaption{Triangular representation}
\label{figure_triangular_representation_mapped1}
\end{minipage}
\begin{minipage}{.32\linewidth}
\begin{tikzpicture}[scale=2]

\draw[white] (-0.2,-0.4) rectangle (2.2,2.4);

\coordinate (a) at (0.80,0.80);
\coordinate (b) at (0.20,0.20);
\coordinate (c) at (0.80,0.30);

\coordinate (a1) at (1.33,0.66);
\coordinate (a2) at (1.00,1.00);
\coordinate (a3) at (0.66,1.33);
\coordinate (a4) at (0.33,1.66);
\coordinate (b1) at (0.00,2.00);
\coordinate (b2) at (0.00,0.00);
\coordinate (b3) at (2.00,0.00);
\coordinate (c1) at (1.66,0.33);

\draw (a1) node[anchor=south west] {$\ba$};
\draw (a2) node[anchor=south west] {$\bb$};
\draw (a3) node[anchor=south west] {$\bc$};
\draw (a4) node[anchor=south west] {$\bd$};
\draw (b1) node[anchor=east] {$\be$};
\draw (b2) node[anchor=east] {${\boldsymbol f}$};
\draw (b3) node[anchor=west] {$\bg$};
\draw (c1) node[anchor=south west] {$\bh$};

\draw[dashed] (a) -- (c);
\draw[dashed] (b) -- (c);

\draw[dashed] (a) -- (a1);
\draw[dashed] (a) -- (a2);
\draw[dashed] (a) -- (a3);
\draw[dashed] (a) -- (a4);

\draw[dashed] (b) -- (a4);
\draw[dashed] (b) -- (b1);
\draw[dashed] (b) -- (b2);
\draw[dashed] (b) -- (b3);

\draw[dashed] (c) -- (a1);
\draw[dashed] (c) -- (a4);
\draw[dashed] (c) -- (b3);
\draw[dashed] (c) -- (c1);

\draw[ultra thick] (a1) -- (a2);
\draw[ultra thick] (a2) -- (a3);
\draw[ultra thick] (a3) -- (a4);
\draw[ultra thick] (a4) -- (b1);
\draw[ultra thick] (b1) -- (b2);
\draw[ultra thick] (b2) -- (b3);
\draw[ultra thick] (b3) -- (c1);
\draw[ultra thick] (c1) -- (a1);

\fill (a) circle (0.1);
\fill ($(c)-(0.1,0.1)$) rectangle ($(c)+(0.1,0.1)$);
\fill ($(b)+(-0.1,-0.1)$) -- ($(b)+(0.1,-0.1)$) -- ($(b)+(0.0,0.1)$) -- cycle;

\end{tikzpicture}
\subcaption{Another representation}
\label{figure_triangular_representation_mapped2}
\end{minipage}
\caption{Triangular representation of a mesh}
\end{figure}

\begin{proof}
We first note that if $\CT$ consists of a single element $K$, then it is clear
that the result is true by considering a simple affine map associating the relevant
vertices of $K$ to the ones of $\widehat T$. We therefore focus on the case where
$\CT$ has at least two elements hereafter.

Due to Lemma~\ref{lemma_triconnected}, we know that the graph $(\CV,\CE)$,
where $\CV$ and $\CE$ are the vertices and edges of $\CT$, is triconnected.
Then~\cite[(9.2)]{tutte_1963a}, Tutte's embedding theorem ensures that we
can place the boundary vertices $\CV^{\rm ext}$ so that they correspond
to the vertices of an arbitrary convex polygon $P$, and draw the graph
$(\CV,\CE)$ in the plane such that the outer face of the graph is $P$. In
fact, we can always do so for a large family of star-shaped polygons
\cite[Theorem 10]{hong_nagamochi_2008a}, and it is in particular possible to
place the boundary vertices on the boundary of the reference triangle $\widehat T$.
Since the original mesh covering $\omega$ and the drawing of the graph $(\CV,\CE)$
in $\widehat T$ are two drawings of the same graph, the mapping $\Psi$ that is piecewise
affine on $\CT$ and maps the vertices of $\CT$ to the coordinates of the drawing
in $\widehat T$ is uniquely defined and satisfies the first statement of the proposition.
This process is illustrated in Figure~\ref{figure_triangular_representation_mapped1}.

If we further partition the boundary of $\omega$, then either $\Gamma^\flat$ or $\Gamma^\sharp$
consists of at least two edges. To fix the ideas, let us assume that $\Gamma^\flat$ has at least
two edges. Then, we can place the vertices on the boundary of $\widehat T$ so that $\Gamma^\flat$
is mapped on the horizontal and vertical edges of $\widehat T$ (see Figure
\ref{figure_triangular_representation_mapped2}), and $\Gamma^\sharp$ is then
mapped onto the remaining edge. We proceed the other way around if $\Gamma^\flat$
consists of a single edge.
\end{proof}

\end{document}